\newtheorem{thm}{Theorem}[section]
\newtheorem{prop}[thm]{Proposition}
\newtheorem{lemma}[thm]{Lemma}
\newtheorem{claim}{Claim}
\theoremstyle{definition}
\newtheorem{rem}[thm]{Remark}
\newtheorem*{ack}{Acknowledgment}
\newcommand{\sq}{\hfill $\square$}
\newcommand{\kah}{K\"{a}hler}
\newcommand{\dol}{\sqrt{-1}\partial \overline{\partial}}
\newcommand{\hol}{H\"{o}lder}
\newcommand{\ma}{Monge-Amp$\grave{{\rm e}}$re}
\def\address#1#2{\begingroup
\noindent\parbox[t]{16cm}{%
\small{\scshape\ignorespaces#1}\par\vskip1ex
\noindent\small{\itshape E-mail address}%
\/: #2\par\vskip4ex}\hfill%
\endgroup}%
\title{Almost scalar-flat K\"{a}hler metrics on affine algebraic manifolds}
\author{Takahiro Aoi}
\date{}
\begin{document}
\maketitle

\footnote{ 
2010 \textit{Mathematics Subject Classification}.
Primary 53C25; Secondary 32Q15, 53C21.
}
\footnote{ 
\textit{Key words and phrases}. 
 constant scalar curvature {\kah} metrics, complex Monge-Amp$\grave{{\rm e}}$re equations, plurisubharmonic functions, {\kah} manifolds.
}

\begin{abstract}
Let $(X,L_{X})$ be an $n$-dimensional polarized manifold.\
Let $D$ be a smooth hypersurface defined by a holomorphic section of $L_{X}$.\
In this paper, we show the existence of a complete K\"{a}hler metric on $X \setminus D$ whose scalar curvature is flat away from some divisor if there are positive integers $l(>n),m$ such that the line bundle $K_{X}^{-l} \otimes L_{X}^{m}$ is very ample and the ratio $m/l$ is sufficiently small.\
\end{abstract}

\setcounter{tocdepth}{2}
\tableofcontents

\section{
Introduction}
\label{intro}

Let $(X,L_{X})$ be a polarized manifold of dimension $n$, i.e., $X$ is an $n$-dimensional compact complex manifold and $L_{X}$ is an ample line bundle over $X$.\
Assume that there is a smooth hypersurface $D \subset X$ with
$$
D \in |L_{X}|.
$$
Set an ample line bundle $L_{D} := \mathscr{O}(D)|_{D} = L_{X}|_{D}$ over $D$.\
Since $L_{X}$ is ample, there exists a Hermitian metric $h_{X}$ on $L_{X}$ which defines a {\kah} metric $\theta_{X}$ on $X$, i.e., the curvature form of $h_{X}$ multiplied by $\sqrt{-1}$ is positive definite.\
Then, the restriction of $h_{X}$ to $L_{D}$ defines also a {\kah} metric $\theta_{D}$ on $D$.\ 
Let $\hat{S}_{D}$ be the average of the scalar curvature $S(\theta_{D})$ of $\theta_{D}$ defined by
$$
\hat{S}_{D} := \frac{\displaystyle\int_{D} S(\theta_{D}) \theta_{D}^{n-1}}{\displaystyle\int_{D} \theta_{D}^{n-1}} = \frac{(n-1) c_{1}(K_{D}^{-1}) \cup c_{1}(L_{D})^{n-2}}{c_{1}(L_{D})^{n-1}},
$$
where $K_{D}^{-1}$ is the anti-canonical line bundle of $D$.\
Note that $\hat{S}_{D}$ is a topological invariant in the sense that it is representable in terms of Chern classes of the line bundles $K_{D}^{-1}$ and $L_{D}$.\
In this paper, we treat the following case :
\begin{equation}
\label{positivity 1}
\hat{S}_{D} > 0.
\end{equation}
Let $\sigma_{D} \in H^{0}(X,L_{X})$ be a defining section of $D$ and set $t := \log || \sigma_{D} ||_{h_{X}}^{-2}$.\
Following \cite{BK}, we can define a complete {\kah} metric $\omega_{0}$ by
$$
\omega_{0}:= \frac{n(n-1)}{\hat{S}_{D}}\dol \exp \left(\frac{\hat{S}_{D}}{n(n-1)}t\right)\\
$$
on the noncompact complex manifold $X \setminus D$.\
This {\kah} metric $\omega_0$ is of asymptotically conical geometry (see \cite{Aoi1}).

In \cite{Aoi1}, we show that there exists a complete scalar-flat {\kah} metric which is of asymptotically conical geometry if the following conditions hold :
(1) $ n\geq 3$ and there is no nonzero holomorphic vector field on $X$ vanishing on $D$,
(2) $\theta_{D}$ is a cscK metric and $0 < \hat{S}_D < n(n-1)$,
(3) the scalar curvature of $\omega_0$ is sufficiently small in the weighted Banach space (see Condition 1.2 and Condition 1.3 in \cite{Aoi1}).
In this paper, we construct a complete {\kah} metric on $X \setminus D$ whose scalar curvature can be made small arbitrarily by gluing plurisubharmonic functions.\

To show this, we consider a degenerate (meromorphic) complex {\ma} equation.\
Take positive integers $l > n$ and $m$ such that the line bundle $K_{X}^{-l} \otimes L_{X}^{m}$ is very ample.\
Let $F \in |K_{X}^{-l} \otimes L_{X}^{m}|$ be a smooth hypersurface defined by a holomorphic section $\sigma_{F} \in H^{0}(X,K_{X}^{-l} \otimes L_{X}^{m})$ such that the divisor $D+F$ is simple normal crossing.\
For a defining section $\sigma_{D} \in H^{0}(X,L_{X})$ of $D$, set
$$
\xi := \sigma_{F} \otimes \sigma_{D}^{-m}.
$$
From the result due to Yau \cite[Theorem 7]{Yau}, we can solve the following degenerate complex Monge-Amp$\grave{{\rm e}}$re equation:
$$
(\theta_{X} + \dol \varphi)^{n} = \xi^{-1/l} \wedge \overline{\xi}^{-1/l}.
$$
Moreover, it follows from a priori estimate due to  Ko\l odziej \cite{Ko} that the solution $\varphi$ is bounded on $X$.\
Thus, we can glue plurisubharmonic functions by using the regularized maximum function.\
To compute the scalar curvature of the glued {\kah} metric, we need to study behaviors of higher order derivatives of the solution $\varphi$.\
So, we give explicit estimates of them near the intersection $D \cap F$ :
\begin{thm}
\label{solution}
Let $(z^{i})_{i=1}^{n} = (z^{1},z^{2},...,z^{n-2},w_{F},w_{D})$ be local holomorphic coordinates such that $\{ w_{F} = 0 \} = F$ and $\{ w_{D} = 0 \} = D$.\
Then, there exists a positive integer $a(n)$ depending only on the dimension $n$ such that 

\begin{eqnarray*}
\left| \frac{\partial^{2}}{\partial z^{i} \partial \overline{z}^{j}} \partial^{\alpha} \varphi \right| &=& O\left(  |w_{D}|^{-2a(n) m/l}|w_{F}|^{-2a(n) / l} \right),\\
\left| \frac{\partial^{4}}{\partial w_{F}^{2} \partial \overline{w_{F}^{2}}}  \varphi \right| &=& O\left( |w_{D}|^{-2 a(n) m/l}|w_{F}|^{ -2 - 2a(n)/l } \right),\\
\left| \frac{\partial^{4}}{\partial w_{D}^{2} \partial \overline{w_{D}^{2}}} \varphi \right| &=& O\left( |w_{D}|^{ -2 -2a(n)m/l }|w_{F}|^{-2a(n)/l} \right),
\end{eqnarray*}
as $|w_{F}|,|w_{D}| \to 0$, for any $1\leq i, j \leq n-2$ and multi-index $\alpha = (\alpha_{1},...,\alpha_{n})$ with $0 \leq \sum_{i} \alpha_{i} \leq 2$.\

\end{thm}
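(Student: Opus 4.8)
The plan is to reduce Theorem~\ref{solution} to a purely local question on a small polydisc centered at a point $p \in D \cap F$, and then to analyze the {\ma} equation there by a scaling argument. Away from $D \cup F$ the right-hand side $\xi^{-1/l}\wedge\overline{\xi}^{-1/l}$ is a smooth positive volume form, so the classical interior Schauder theory for the {\ma} operator, together with the global bound on $\varphi$ coming from Ko\l odziej's estimate, already gives uniform $C^{k}$-bounds on compact subsets of $X \setminus (D\cup F)$; hence the whole content of the theorem is the behaviour as $|w_F|,|w_D|\to 0$. Near $p$ I would introduce the explicit local model potential
$$
\psi := c_{F}\,|w_{F}|^{2(1-1/l)} + c_{D}\,|w_{D}|^{2(1+m/l)} + \sum_{k=1}^{n-2}|z^{k}|^{2}
$$
for suitable positive constants $c_{F},c_{D}$. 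Its associated form $\omega_{\mathrm{mod}}:=\dol\psi$ is {\kah} on the punctured polydisc, and a direct computation shows that $\omega_{\mathrm{mod}}^{n}$ equals the Euclidean volume form times $|w_{F}|^{-2/l}|w_{D}|^{2m/l}$ up to a bounded positive factor, i.e.\ it reproduces exactly the degeneracy of $\xi^{-1/l}\wedge\overline{\xi}^{-1/l}$. After the (multivalued, but on sectors biholomorphic) changes of variable $u=w_{F}^{1-1/l}$, $v=w_{D}^{1+m/l}$, the form $\omega_{\mathrm{mod}}$ becomes the flat product metric in the coordinates $(z',u,v)$, where $z'=(z^{1},\dots,z^{n-2})$; this is the feature that makes the subsequent estimates go through.

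The first step is a second-order (Laplacian) bound. Applying the standard second-order estimate for complex {\ma} equations --- the Aubin--Yau computation for $\log\operatorname{tr}_{\omega_{\mathrm{mod}}}(\theta_{X}+\dol\varphi)$ with $\omega_{\mathrm{mod}}$ as reference metric (its holomorphic bisectional curvature being bounded below, since it is flat in the coordinates above), combined with the maximum principle on small polydiscs, a cutoff, the $C^{0}$-bound on $\varphi$, and the interior bounds away from $D\cup F$ to absorb the boundary contributions --- yields
$$
\operatorname{tr}_{\omega_{\mathrm{mod}}}(\theta_{X}+\dol\varphi)\;\leq\;C\,|w_{F}|^{-2b/l}|w_{D}|^{-2bm/l}
$$
for a constant $b=b(n)$; the powers appear because one must dominate the bounded-geometry reference $\omega_{\mathrm{mod}}$ by $\theta_{X}$ (and conversely) near $D\cap F$. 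Feeding this upper bound into the equation $\det(\theta_{X}+\dol\varphi)\asymp\det\omega_{\mathrm{mod}}$ forces a matching lower bound on the smallest eigenvalue of $\theta_{X}+\dol\varphi$ relative to $\omega_{\mathrm{mod}}$, at the cost of raising the weight to the $(n-1)$-st power; hence $\theta_{X}+\dol\varphi$ is uniformly equivalent to $\omega_{\mathrm{mod}}$ up to factors $|w_{F}|^{\mp 2b'/l}|w_{D}|^{\mp 2b'm/l}$ with $b'=b'(n)$. This already gives the second-order estimates of the theorem (the case $|\alpha|=0$), and the two pure fourth-order statements follow because differentiating the model coefficients $|w_{F}|^{2(1-1/l)}$, resp.\ $|w_{D}|^{2(1+m/l)}$, twice more in $w_{F}$, resp.\ $w_{D}$, produces the additional factor $|w_{F}|^{-2}$, resp.\ $|w_{D}|^{-2}$.

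For the remaining higher-order derivatives I would carry out a dyadic rescaling. Fix integers $j,k\geq 1$, look at the block $\{\,|w_{F}|\sim 2^{-k},\ |w_{D}|\sim 2^{-j},\ |z'|\lesssim 1\,\}$, and rescale the variables (and $\varphi$) so that it becomes a fixed unit polydisc. By the previous step the equation becomes a \emph{uniformly} elliptic complex {\ma} equation $\det(\theta_{X}'+\dol\widetilde\varphi)=\widetilde G$ on the unit polydisc with $\widetilde G$ and its derivatives bounded independently of $j,k$. Interior Evans--Krylov gives a uniform $C^{2,\gamma}$-bound with $\gamma=\gamma(n)\in(0,1)$, after which differentiating the equation and applying Schauder iteratively gives uniform bounds on all derivatives of $\widetilde\varphi$ up to order four. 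Scaling back converts these into the pointwise bounds in the theorem, with the integer $a(n)$ simply absorbing the powers of $2^{k},2^{j}$ accumulated from the second-order estimate and from the Evans--Krylov/Schauder constants; since all these inputs depend only on $n$, so does $a(n)$.

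The main obstacle is the codimension-two locus $D\cap F$: there the conical singularity along $F$ (cone angle $2\pi(1-1/l)<2\pi$) meets the genuinely \emph{degenerate} behaviour along $D$ (formal cone angle $2\pi(1+m/l)>2\pi$), and every estimate must be made uniform \emph{up to} this corner rather than merely in each slice. The $w_{D}$-direction is the delicate one, since $\omega_{\mathrm{mod}}$ is there not an honest cone metric but a degenerate one, so the curvature lower bound and the volume comparison used in the second-order estimate have to be checked by hand; and obtaining that estimate with a clean \emph{power} of $|w_{F}|,|w_{D}|$ --- rather than a logarithmic or worse loss --- is really the crux, because the entire higher-order argument is just a rescaling of it.
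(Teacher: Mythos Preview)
The paper's route is quite different. For the $C^{2}$ bound it does not argue locally at all: it quotes P\u{a}un's \emph{global} Laplacian estimate on the compact manifold $X$, obtaining $0\le\theta_X+\dol\varphi\le A\,\|\sigma_F\|^{-2/l}\theta_X$ directly; combined with the volume equation this gives $\Lambda=O(\|\sigma_F\|^{-2/l})$ and $\lambda^{-1}=O(\|\sigma_D\|^{-2m/l})$ for the extremal eigenvalues of $\theta_X+\dol\varphi$ relative to $\theta_X$. The higher estimates are then obtained by re-running the divergence-form Evans--Krylov argument for $\det(u_{i\bar j})=f$ (as in \cite[Chapter~14]{GZ}, \cite[\S17]{GT}) while \emph{tracking by hand how every constant depends on the ellipticity ratio} $\Lambda/\lambda$: the weak Harnack constant for the subsolutions $u_{\zeta\bar\zeta}$ is shown to be $O(\Lambda/\lambda)$, the $\epsilon$-H\"older constant of $f^{1/n}$ is computed explicitly, and the iteration lemma \cite[Lemma~8.23]{GT} yields $\|\varphi\|_{C^{2,\tilde\epsilon}}=O((\Lambda/\lambda)^{2})$. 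Two applications of Schauder to the differentiated equation, with $C_S=O((\Lambda/\lambda)^{s(n)})$ read off from the proof of \cite[Theorem~6.2]{GT}, give the third- and fourth-order bounds; $a(n)$ is simply the accumulated exponent of $\Lambda/\lambda$.

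Your model-metric-plus-rescaling scheme is natural, but as written it has two linked gaps. First, the local Aubin--Yau step: the maximum principle is applied on $P\setminus(D\cup F)$, which is not compact, and you supply no barrier preventing $\log\operatorname{tr}_{\omega_{\mathrm{mod}}}\omega_\varphi - A\tilde\varphi$ from blowing up toward $\{w_F=0\}\cup\{w_D=0\}$; even the outer boundary $\partial P$ meets $D\cup F$ along its $z'$-faces, so ``interior bounds away from $D\cup F$'' do not control all of it. This step genuinely needs an approximation or barrier argument of conical {\kah} type, or one simply invokes P\u{a}un globally as the paper does. Second --- and this is the more serious point --- once you accept a $C^{2}$ bound \emph{with powers} $|w_F|^{-2b/l}|w_D|^{-2bm/l}$, the equation on a rescaled dyadic block is \emph{not} uniformly elliptic independently of $j,k$: its ellipticity ratio is of size $2^{2b'k/l+2b'jm/l}$, so Evans--Krylov and Schauder do not give $j,k$-independent constants, and in particular your ``$\gamma=\gamma(n)$'' is unjustified. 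Showing that those constants are at worst a fixed power of the ellipticity is exactly what has to be proved, and that is precisely the paper's computation; the rescaling reformulates but does not bypass it. (If you could prove the \emph{uniform} equivalence $C^{-1}\omega_{\mathrm{mod}}\le\omega_\varphi\le C\,\omega_{\mathrm{mod}}$, your rescaling would work cleanly --- but that is a substantially harder statement than the one you sketch.)
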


By applying Theorem \ref{solution} and gluing plurisubharmonic functions, we have the following result :
\begin{thm}
\label{small scalar curvature}
Assume that there exist positive integers $l>n$ and $m$ such that
\begin{equation}
\label{positivity 2}
\frac{a(n)m}{2l} < \frac{\hat{S}_{D}}{n(n-1)}
\end{equation}
and the line bundle $K_{X}^{-l} \otimes L_{X}^{m}$ is very ample.\
Here, $a(n)$ is the positive integer in Theorem \ref{solution}.\
Take a smooth hypersuface $F \in |K_{X}^{-l} \otimes L_{X}^{m}|$ such that $D + F$ is simple normal crossing.\
Then, for any relatively compact domain $Y \Subset X \setminus (D \cup F)$, there exists a complete {\kah} metric $\omega_{F}$ on $X \setminus D$ whose scalar curvature $S(\omega_{F}) = 0$ on $Y$ and is arbitrarily small on the complement of $Y$.\
In addition, $\omega_{F} = \omega_{0}$ on some neighborhood of $D \setminus (D \cap F)$.\
\end{thm}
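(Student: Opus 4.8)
The plan is to build $\omega_F$ by gluing, via Demailly's regularized maximum function, three plurisubharmonic potentials on $X\setminus D$: the potential of $\omega_0$ (which controls the complete end along $D$), a rescaled potential of $\omega_\varphi:=\theta_X+\dol\varphi$ (which will be scalar-flat off $D\cup F$), and a desingularized local model near $F$. I would first rewrite everything in global potentials on $X\setminus D$: since $\theta_X$ is the curvature of $h_X$ we have $\theta_X=\dol t$ there, so $\omega_\varphi=\dol(t+\varphi)$ and $\omega_0=\dol\big(\tfrac{n(n-1)}{\hat S_D}e^{ct}\big)$ with $c:=\hat S_D/(n(n-1))$. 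Taking $-\dol\log$ of the {\ma} equation $(\theta_X+\dol\varphi)^n=\xi^{-1/l}\wedge\overline{\xi}^{-1/l}$ and noting that on $X\setminus(D\cup F)$ the quantity $\xi^{-1/l}$ is locally a nowhere-vanishing holomorphic $(n,0)$-form, so that $\log$ of the right-hand volume form is pluriharmonic there, gives $\mathrm{Ric}(\omega_\varphi)=0$, hence $S(\omega_\varphi)=0$, on $X\setminus(D\cup F)$. By Ko\l odziej's estimate $\varphi$ is bounded on $X$; and since the right-hand side behaves like $|w_F|^{-2/l}$ near $F$, the metric $\omega_\varphi$ picks up a cone singularity of angle $2\pi(1-1/l)$ along $F$, while away from $F$ it is a genuine Ricci-flat \kah{} metric.

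Next I would desingularize $\omega_\varphi$ near $F$. Using the coordinates and estimates of Theorem \ref{solution}, $\varphi$ behaves transverse to $F$ like $c_n|w_F|^{2(1-1/l)}$ plus a more regular remainder; inside a thin tube $U_\delta=\{|w_F|<\delta\}$ about $F$, cut off away from $D$, I would replace this leading term by the smooth function $c_n(\delta^2+|w_F|^2)^{1-1/l}$, whose $\dol$ is positive since the exponent lies in $(0,1)$, producing a plurisubharmonic potential $\psi_\delta$ on $U_\delta$ with $\dol\psi_\delta$ a smooth \kah{} metric that agrees with $\omega_\varphi$ near $\partial U_\delta$. I then set, with positive constants $A,B,B',\varepsilon$ and $\delta$ to be fixed,
$$
\omega_F:=\dol\,\Big(\max\nolimits_{\varepsilon}\big(\tfrac{n(n-1)}{\hat S_D}e^{ct},\ A(t+\varphi)+B,\ A\,\psi_\delta+B'\big)\Big),
$$
where $A\,\psi_\delta+B'$ is extended by $-\infty$ off $U_\delta$ (and where, if $m/l\notin\mathbb{Z}$, one first mollifies $\varphi$ on the compact shell where the gluing to $\omega_0$ occurs). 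Since a regularized maximum of plurisubharmonic functions is plurisubharmonic and its $\dol$ equals a convex combination of the $\dol$ of the entries plus nonnegative rank-one terms, and since at each point at least one active entry is strictly plurisubharmonic, $\omega_F$ is a \kah{} metric on $X\setminus D$, complete because it coincides with $\omega_0$ near $D$.

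It remains to fix the parameters and estimate $S(\omega_F)$ in the four resulting regions. On $Y$ and a neighbourhood of it disjoint from $U_\delta$, the potential $A(t+\varphi)+B$ strictly dominates the others (take $B$ large, using that $t$ is bounded there), so $\omega_F=A\omega_\varphi$ and $S(\omega_F)=0$. On a tube $\{t>t_1\}$ about $D\setminus(D\cap F)$, the exponential potential $\tfrac{n(n-1)}{\hat S_D}e^{ct}$, which grows like $|w_D|^{-2c}$ while the others grow only logarithmically in $|w_D|^{-1}$, strictly dominates, so $\omega_F=\omega_0$; enlarging $B,B'$ sends $t_1\to\infty$ while keeping $Y$ inside the first region, and since $\omega_0$ is asymptotically conical, $S(\omega_0)=O(e^{-ct_1})$ there. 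On $U_\delta$ the capping potential dominates, so $\omega_F=A\,\dol\psi_\delta$ is a large rescaling of a fixed smooth \kah{} metric and $S(\omega_F)=O(A^{-1})$. Finally, on the remaining, relatively compact, transition bands — where exactly two entries are within $\varepsilon$ of each other — $\omega_F$ is, in every direction, bounded below by a fixed multiple of $\omega_0$ or of $A\omega_\varphi$, whose scales diverge as $B,A\to\infty$, so $S(\omega_F)$ is $o(1)$ there as well. Fixing $\delta$, then $A$, then $B,B'$, then $\varepsilon$ in that order makes $|S(\omega_F)|<\varepsilon_0$ off $Y$ for any prescribed $\varepsilon_0>0$, and $\omega_F=\omega_0$ near $D\setminus(D\cap F)$ by construction.

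The main obstacle is the local analysis near $F$ and, above all, near the crossing $D\cap F$. One must use the precise rates of Theorem \ref{solution} — $O\big(|w_D|^{-2a(n)m/l}|w_F|^{-2a(n)/l}\big)$ for the mixed Hessian and the sharper fourth-order bounds — to show that the replacement defining $\psi_\delta$ is a small $C^2$ modification of a positive form (so that $\dol\psi_\delta$ is genuinely \kah) and matches $\omega_\varphi$ across $\partial U_\delta$ uniformly up to $D\cap F$, and to check that hypothesis \eqref{positivity 2}, $a(n)m/(2l)<c$, is exactly what forces the singular part of $\omega_\varphi$ (and of $\dol\psi_\delta$) along $D$ to be swamped by the conical growth of $\omega_0$, so that in the region where $\tfrac{n(n-1)}{\hat S_D}e^{ct}$ takes over — including the corner $D\cap F$ — the glued metric has the divergent scale that the curvature estimate requires. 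Granting those estimates, verifying that the four parameter regimes are mutually compatible is routine.
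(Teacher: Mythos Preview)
Your three-potential gluing via the regularized maximum is the same architecture as the paper's, and you correctly identify that hypothesis~\eqref{positivity 2} is what lets the growth of $\omega_0$ absorb the blow-up of $\varphi$'s derivatives near $D$. The gap is in your cap near $F$. You assert that ``$\varphi$ behaves transverse to $F$ like $c_n|w_F|^{2(1-1/l)}$ plus a more regular remainder'' and build $\psi_\delta$ by smoothing that leading term. But Theorem~\ref{solution} gives only \emph{upper bounds} on derivatives of $\varphi$---of the crude form $O(|w_D|^{-2a(n)m/l}|w_F|^{-2a(n)/l})$, with $a(n)$ coming from iterated Harnack and Schauder constants---not an asymptotic expansion isolating a conical leading term. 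Without that expansion $\psi_\delta$ is not well-defined, you cannot verify $\dol\psi_\delta>0$ (the uncontrolled ``remainder'' must supply positivity in the tangential directions), and the matching of $\psi_\delta$ to $t+\varphi$ across $\partial U_\delta$ so that the regularized max extends smoothly is not arranged.

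The paper sidesteps all of this: its potential near $F$ is a \emph{global} function $\tilde G_v^\beta(b)=G_v^\beta(\beta b)+\kappa\Theta(t)$ built from $b=\log\|\sigma_F\|^{-2}$ alone, with $\dol G_v^\beta(\beta b)$ a smooth K\"ahler metric on all of $X$ and $S=O((\|\sigma_F\|^{2\beta}+v)^{1/\beta})$ by direct computation; the extra $\kappa\Theta(t)$ keeps the cap complete through $D\cap F$. Theorem~\ref{solution} is then used \emph{only} on the transition bands, to bound the fourth-order derivatives of $\varphi$ that appear in the Ricci tensor of the glued metric. This is also where your argument is too thin: ``the scales diverge, so $S=o(1)$'' does not follow, since $S$ involves up to four derivatives of $\varphi$ and of $M_\eta$. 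The paper carries out explicit block-matrix estimates of $g_{i\bar j}$, $g^{i\bar j}$, $R_{i\bar j}$ in the coordinates $(z^1,\dots,z^{n-2},w_F,w_D)$, feeding in Theorem~\ref{solution} for $\varphi$ and taking $\eta_1,\eta_2$ proportional to $c$ so that each $t_1$- or $t_2$-derivative of $M_\eta$ gains a factor $c^{-1}$; hypothesis~\eqref{positivity 2} enters precisely to make the $|w_D|^{-2-2a(n)m/l}$ blow-up of $\partial_{w_D}^2\partial_{\bar w_D}^2\varphi$ subordinate to the $|w_D|^{-2-4\hat S_D/n(n-1)}$ growth of $\omega_0$.
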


For example, if the anti-canonical line bundle $K_{X}^{-1}$ of the compact complex manifold $X$ is nef (in particular, $X$ is Fano), the assumption (\ref{positivity 2}) in Theorem \ref{small scalar curvature} holds, i.e., we can always find such integers $l,m$.\
In this article, we treat the case that $K_{X}^{-1}$ has positivity in the senses of (\ref{positivity 1}) and (\ref{positivity 2}).\
From \cite{Aoi1}, if there exists a complete {\kah} metric which is of asymptotically conical geometry and satisfies Condition 1.2 and Condition 1.3, $X \setminus D$ admits a complete scalar-flat {\kah} metric.\
In fact, Theorem \ref{small scalar curvature} gives a {\kah} metric whose scalar curvature is under control.\
However, the {\kah} metric $\omega_{F}$ in Theorem \ref{small scalar curvature} is not of asymptotically conical geometry (near the intersection of $D$ and $F$).\
This problem will be solved in \cite{Aoi3}.

This paper is organized as follows.\
In Section 2, we construct {\kah} potentials, i.e., strictly plurisubharmonic functions, whose scalar curvature is under control.\
In addition, we glue these plurisubharmonic functions by using the regularized maximum function.\
In Section 3, we prove Theorem \ref{solution}.\
To show this, we recall the $C^{2,\epsilon}$-estimate of a solution of the degenerate complex {\ma} equation.\
In Section 4, we prove Theorem \ref{small scalar curvature}.\

\begin{ack}
The author would like to thank Professor Ryoichi Kobayashi who first brought the problem in this article to his attention, for many helpful comments.\
\end{ack}

\section{
Plurisubharmonic functions with small scalar curvature}
\label{sec:7}
To prove Theorem \ref{small scalar curvature}, we prepare {\kah} potentials, i.e., strictly plurisubharmonic functions, whose scalar curvature is under control.\
\subsection{
{\kah} potential near $D$}

In this subsection, we consider a {\kah} potential near $D$ and study the scalar curvature of it.\
Recall that
\begin{equation}
\label{herm}
t = \log || \sigma_{D} ||^{-2}
\end{equation}
and $\theta_{X} = \dol t = \dol \log || \sigma_{D} ||^{-2}$ on $X \setminus D$.\
Set
\begin{equation}
\label{potential D}
\Theta(t) = \frac{n(n-1)}{\hat{S}_{D}} \exp \left( \frac{\hat{S}_{D}}{n(n-1)}t \right).
\end{equation}
Following \cite{BK}, we can define a complete {\kah} metric by
$$
\omega_{0}:= \dol \Theta(t) = \frac{n(n-1)}{\hat{S}_{D}}\dol \exp \left(\frac{\hat{S}_{D}}{n(n-1)}t\right)\\
$$
on $X \setminus D$.\
Following \cite{Aoi1}, recall the asymptotic behavior of the scalar curvature of $\omega_{0}$.\
\begin{lemma}
The scalar curvature $S(\omega_{0})$ can be estimated as follows :
$$
\label{estimate zero}
S(\omega_{0}) = O \left( || \sigma_{D} ||^{2\hat{S}_{D}/n(n-1)} \right)
$$
as $\sigma_{D} \to 0$.\
\end{lemma}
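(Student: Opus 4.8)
The plan is to carry out a direct curvature computation for the metric $\omega_0 = \dol \Theta(t)$ on $X \setminus D$ in a collar neighborhood of $D$, exploiting the fact that $\Theta$ depends only on the single real variable $t = \log\|\sigma_D\|^{-2}$ and that $\theta_X = \dol t$ is a fixed smooth \kah\ metric extending across $D$. First I would fix local holomorphic coordinates $(z^1,\dots,z^{n-1},w)$ with $\{w=0\}=D$, so that $t = -\log|w|^2 + \psi$ where $\psi$ is the smooth local potential of $h_X$; thus $t \to +\infty$ and $\|\sigma_D\| = |w|e^{-\psi/2} \to 0$ as $w \to 0$. Writing $\Theta'(t), \Theta''(t)$ for the derivatives of $\Theta$, one has $\partial\bar\partial\Theta(t) = \Theta'(t)\,\theta_X + \Theta''(t)\,\partial t \wedge \bar\partial t$, so $\omega_0$ is (a positive combination of) $\theta_X$ and the rank-one form $\partial t\wedge\bar\partial t$. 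Since $\Theta(t) = \tfrac{n(n-1)}{\hat S_D}\exp(\tfrac{\hat S_D}{n(n-1)}t)$, we have $\Theta'(t) = \exp(\tfrac{\hat S_D}{n(n-1)}t) = \tfrac{\hat S_D}{n(n-1)}\Theta(t)$ and $\Theta''(t) = \tfrac{\hat S_D}{n(n-1)}\Theta'(t)$; note $\Theta'(t) \asymp \|\sigma_D\|^{-2\hat S_D/n(n-1)}$, which blows up.

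Next I would compute $\det\omega_0$ relative to $\det\theta_X$. Using the rank-one perturbation formula, $\det(\Theta'\theta_X + \Theta''\,\partial t\wedge\bar\partial t) = (\Theta')^{n}\det\theta_X \cdot \bigl(1 + (\Theta''/\Theta')|\partial t|^2_{\theta_X}\bigr)$, where $|\partial t|^2_{\theta_X} = \theta_X^{i\bar j}t_i t_{\bar j}$. The scalar curvature is then $S(\omega_0) = -\Delta_{\omega_0}\log\det(g_{\omega_0})$ (in local coordinates, $-g_{\omega_0}^{i\bar j}\partial_i\partial_{\bar j}\log\det(g_{\omega_0})$), and one computes $\log\det(g_{\omega_0}) = n\log\Theta'(t) + \log\det(g_{\theta_X}) + \log(1 + (\Theta''/\Theta')|\partial t|^2_{\theta_X})$. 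The key structural point is that $\Theta''/\Theta' = \tfrac{\hat S_D}{n(n-1)}$ is a constant, and that $|\partial t|^2_{\theta_X}$ is a bounded smooth function near $D$ (it extends across $D$ because $t_i, t_{\bar j}$ and $\theta_X^{i\bar j}$ do, once we write $t = -\log|w|^2 + \psi$). Hence the last logarithmic term is a bounded smooth function, and $\log\det(g_{\omega_0}) = n\log\Theta'(t) + (\text{bounded smooth})$. Differentiating, $\partial\bar\partial\log\det(g_{\omega_0}) = n\,\partial\bar\partial\log\Theta'(t) + (\text{bounded }(1,1)\text{-form})$, and since $\log\Theta'(t) = \tfrac{\hat S_D}{n(n-1)}t$ we get $\partial\bar\partial\log\Theta'(t) = \tfrac{\hat S_D}{n(n-1)}\theta_X$, again bounded.

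Finally, the scalar curvature is $S(\omega_0) = -\mathrm{tr}_{\omega_0}\bigl(\partial\bar\partial\log\det g_{\omega_0}\bigr)$, and by the previous step the form $\partial\bar\partial\log\det g_{\omega_0}$ is bounded with respect to $\theta_X$ (uniformly near $D$). The gain comes entirely from taking the trace with respect to $\omega_0$ rather than $\theta_X$: since $\omega_0 \ge \Theta'(t)\,\theta_X$ as positive forms and $\Theta'(t) \asymp \|\sigma_D\|^{-2\hat S_D/n(n-1)}$, we have $\mathrm{tr}_{\omega_0}\eta = O(\|\sigma_D\|^{2\hat S_D/n(n-1)})$ for any $\theta_X$-bounded $(1,1)$-form $\eta$. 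Applying this to $\eta = \partial\bar\partial\log\det g_{\omega_0}$ yields $S(\omega_0) = O(\|\sigma_D\|^{2\hat S_D/n(n-1)})$ as $\sigma_D \to 0$, as claimed.

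The main obstacle I anticipate is bookkeeping rather than conceptual: one must verify carefully that every term generated by differentiating $\log(1 + (\Theta''/\Theta')|\partial t|^2_{\theta_X})$ and $\log\det(g_{\theta_X})$ is genuinely bounded in $\theta_X$-norm up to and including $D$, i.e., that no hidden $|w|^{-1}$ factors survive after using $t = -\log|w|^2+\psi$ — in particular controlling $\partial_i\partial_{\bar j}(\theta_X^{k\bar l}t_k t_{\bar l})$ and the derivatives of $\theta_X^{i\bar j}$ near $w=0$. This is exactly the asymptotically conical structure of $\omega_0$ established in \cite{Aoi1}, so one may either cite that asymptotic analysis directly or reproduce the short coordinate computation; either way the estimate $S(\omega_0) = O(\|\sigma_D\|^{2\hat S_D/n(n-1)})$ follows.
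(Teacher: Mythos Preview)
Your overall strategy---show that the Ricci form of $\omega_0$ is bounded with respect to the smooth reference metric $\theta_X$, and then gain the factor $\|\sigma_D\|^{2\hat S_D/n(n-1)}$ by tracing with $\omega_0 \ge \Theta'(t)\theta_X$---is correct and is precisely the approach the paper takes (here by citing \cite{Aoi1}, and again explicitly in the proof of Lemma~\ref{estimate g}). However, one step in your argument is wrong as stated and is not mere bookkeeping.

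The claim that $|\partial t|^2_{\theta_X}$ ``is a bounded smooth function near $D$'' is false. Writing $t = -\log|w|^2 + \psi$ in your coordinates, one has $t_w = -w^{-1} + \psi_w$, so $|\partial t|^2_{\theta_X} \sim \theta_X^{w\bar w}|w|^{-2}$ blows up; the derivatives $t_i$ do \emph{not} extend across $D$. What is true---and what rescues the argument---is that $|w|^2\bigl(1 + \tfrac{\hat S_D}{n(n-1)}|\partial t|^2_{\theta_X}\bigr)$ extends to a smooth strictly positive function $Q$ across $D$ (check: $|w|^2 t_w t_{\bar w} = |{-1}+w\psi_w|^2$ is smooth and equals $1$ at $w=0$, while all other terms in $|w|^2|\partial t|^2_{\theta_X}$ vanish at $w=0$). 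Hence
\[
\log\Bigl(1 + \tfrac{\hat S_D}{n(n-1)}|\partial t|^2_{\theta_X}\Bigr) \;=\; -\log|w|^2 + \log Q,
\]
and since $\log|w|^2$ is pluriharmonic on $\{w\neq 0\}$, its $\partial\bar\partial$ vanishes there; thus $\partial\bar\partial$ of the left-hand side equals $\partial\bar\partial\log Q$, which is smooth and $\theta_X$-bounded. Equivalently, as the paper phrases it, $\|\sigma_D\|^{2 + 2\hat S_D/(n-1)}\omega_0^n$ extends to a smooth volume form on $X$, so ${\rm Ric}(\omega_0) = -(1+\tfrac{\hat S_D}{n-1})\theta_X + (\text{smooth bounded})$. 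With this correction your trace argument goes through unchanged.
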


\begin{rem}
Moreover, from Theorem 1.1 in \cite{Aoi1}, if $\theta_{D}$ is cscK, we have the following strong result :
$$
S(\omega_{0}) = O \left( || \sigma_{D} ||^{2 + 2\hat{S}_{D}/n(n-1)} \right)
$$
as $\sigma_{D} \to 0$.\
\end{rem}

\subsection{
{\kah} potential near $F$}

In this subsection, we construct a {\kah} metric on $X$ whose scalar curvature is small near the smooth hypersurface $F \in |K_{X}^{-l} \otimes L_{X}^{m}|$.\
Here, $l,m$ are positive integers such that the line bundle $K_{X}^{-l} \otimes L_{X}^{m}$ is very ample.\
For a fixed Hermitian metric on $K_{X}^{-l} \otimes L_{X}^{m}$, set $b := \log || \sigma_{F} ||^{-2}$.\
Since the holomorphic line bundle $K_{X}^{-l} \otimes L_{X}^{m}$ is very ample, we may assume that $\dol b$ is a {\kah} metric on $X$.\
For parameters $v > 0$ and $\beta \in \mathbb{Z}_{>0}$, define a function by
\begin{equation}
\label{function g}
G_{v}^{\beta}(b) := \int_{b_{0}}^{b} \left( \frac{1}{e^{-y} + v} \right)^{1/\beta} dy
\end{equation}
for some fixed $b_{0} \in \mathbb{R}$.\
Note that $G_{v}^{\beta}(b)$ is defined smoothly outside $F$ and $\lim_{b \to \infty}G_{v}^{\beta}(b) = + \infty$ for any $v > 0$.\

\begin{lemma}
\label{gamma}
For $\mathbb{Z} \ni \beta \geq 1$, $\gamma_{v}^{\beta} := \dol G_{v}^{\beta}(\beta b)$ defines a {\kah} metric on $X$.\
\end{lemma}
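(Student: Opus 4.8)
The plan is to show that $\gamma_v^\beta = \dol G_v^\beta(\beta b)$ is a positive $(1,1)$-form on all of $X$ by a direct computation of its local expression in terms of $\dol b$ and $\sqrt{-1}\,\partial b \wedge \overline{\partial} b$, together with the positivity of $\dol b$ coming from the very ampleness of $K_X^{-l} \otimes L_X^m$. First I would write, away from $F$,
$$
\dol G_v^\beta(\beta b) = (G_v^\beta)'(\beta b)\,\beta\,\dol b + (G_v^\beta)''(\beta b)\,\beta^2\,\sqrt{-1}\,\partial b \wedge \overline{\partial} b,
$$
and compute from \eqref{function g} that $(G_v^\beta)'(y) = \left(\frac{1}{e^{-y}+v}\right)^{1/\beta} > 0$, so the first term is a positive multiple of the {\kah} form $\dol b$. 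For the second term, I would differentiate once more: $(G_v^\beta)''(y) = \frac{1}{\beta}\left(\frac{1}{e^{-y}+v}\right)^{1/\beta}\cdot\frac{e^{-y}}{e^{-y}+v} \geq 0$, which is nonnegative for all $y$ since $v>0$. Because $\sqrt{-1}\,\partial b \wedge \overline{\partial} b \geq 0$ always, the second term is a nonnegative $(1,1)$-form, and hence on $X \setminus F$ we have $\gamma_v^\beta \geq \beta\,(G_v^\beta)'(\beta b)\,\dol b > 0$, i.e. $\gamma_v^\beta$ is strictly positive there.

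The remaining point is to verify that $G_v^\beta(\beta b)$ extends to a smooth strictly plurisubharmonic function across $F = \{\sigma_F = 0\}$, equivalently that $\gamma_v^\beta$ extends smoothly as a {\kah} form over $F$. Here I would use that $b = \log\|\sigma_F\|^{-2} \to +\infty$ near $F$, and the substitution $y = \beta b$ together with $e^{-y} = \|\sigma_F\|^{2\beta}$. The key observation is that the integrand $(e^{-y}+v)^{-1/\beta}$ is, as a function of $e^{-y} = \|\sigma_F\|^{2\beta}$, smooth and nonvanishing near $e^{-y}=0$ (since $v>0$ keeps it away from the singularity of $s \mapsto s^{-1/\beta}$); so $G_v^\beta(\beta b)$, after subtracting off the linear-in-$b$ leading term, is a smooth function of $\|\sigma_F\|^{2\beta}$ near $F$. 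Concretely, writing $G_v^\beta(\beta b) = v^{-1/\beta} b + \left(G_v^\beta(\beta b) - v^{-1/\beta}b\right)$ and checking that the bracketed remainder is a convergent power series in $e^{-\beta b} = \|\sigma_F\|^{2\beta}$, one sees the potential differs from the globally defined smooth Hermitian-metric potential $v^{-1/\beta}\cdot(\text{local weight})$ by something smooth across $F$; hence $\gamma_v^\beta$ is a smooth closed real $(1,1)$-form on all of $X$, and by the previous paragraph it is positive on $X\setminus F$, hence positive everywhere by continuity (or by redoing the computation directly in coordinates adapted to $F$).

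I expect the main obstacle to be the smoothness/extension argument across $F$: one must be careful that $G_v^\beta(\beta b)$ genuinely descends to a smooth function on $X$ near $F$ rather than merely a bounded one, which is why the exponent $\beta b$ (making $e^{-\beta b} = \|\sigma_F\|^{2\beta}$ a smooth function of the local coordinate transverse to $F$) and the parameter $v>0$ (removing the branch-point of $s\mapsto s^{-1/\beta}$ at $s=0$) are both essential. Once that is in place, positivity is immediate from the explicit signs of $(G_v^\beta)'$ and $(G_v^\beta)''$ computed above and from the {\kah}ness of $\dol b$.
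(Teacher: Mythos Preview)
Your proposal is correct and follows essentially the same approach as the paper: both compute $\dol G_v^\beta(\beta b)$ explicitly as $(e^{-\beta b}+v)^{-1/\beta}\bigl(\beta\,\dol b + \tfrac{e^{-\beta b}}{e^{-\beta b}+v}\sqrt{-1}\,\partial b\wedge\overline\partial b\bigr)$ and deduce positivity from that of $\dol b$. The only noteworthy difference is in the smooth extension across $F$: the paper observes directly that the term $\tfrac{e^{-\beta b}}{e^{-\beta b}+v}\sqrt{-1}\,\partial b\wedge\overline\partial b$ is smooth on $X$ (since $e^{-\beta b}=\|\sigma_F\|^{2\beta}$ with $\beta\in\mathbb{Z}_{\geq 1}$ cancels the order-two pole of $\partial b\wedge\overline\partial b$), which is quicker than your potential-splitting argument---and note that your leading term should be $v^{-1/\beta}\beta b$, not $v^{-1/\beta}b$.
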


\begin{proof}
In fact,
\begin{eqnarray*}
\dol G_{v}^{\beta}(\beta b)
&=& \beta \sqrt{-1} \partial \left[ \left( \frac{1}{e^{- \beta b} + v} \right)^{1/\beta} \overline{\partial} b \right] \\
&=& \left( \frac{1}{e^{- \beta b} + v} \right)^{1/\beta} \left( \beta \dol b + \frac{ e^{-\beta b}}{e^{-\beta b} + v} \sqrt{-1} \partial b \wedge \overline{\partial} b \right).
\end{eqnarray*}
Note that the last term
$$
\frac{e^{-\beta b}}{e^{-\beta b} + v} \sqrt{-1} \partial b \wedge \overline{\partial} b
$$
is defined smoothly on $X$ from the assumption that $\mathbb{Z} \ni \beta \geq 1$.\
Since $\dol b$ is a {\kah} metric on $X$, we finish the proof.
\end{proof}

Next,  the scalar curvature of $\gamma_{v}^{\beta}$ is given by

\begin{lemma}
\label{small F}
For $\beta \geq 3$, we obtain
$$
S(\gamma_{v}^{\beta}) = S(\dol G_{v}^{\beta}(\beta b)) = O ((|| \sigma_{F} ||^{2 \beta} + v)^{1/\beta})
$$
as $|| \sigma_{F} || \to 0$.\
\end{lemma}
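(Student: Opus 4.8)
\textbf{Proof proposal for Lemma \ref{small F}.}

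The plan is to compute $S(\gamma_v^\beta)$ directly from the explicit formula for $\gamma_v^\beta = \dol G_v^\beta(\beta b)$ derived in the proof of Lemma \ref{gamma}. Write $\phi := \left( e^{-\beta b} + v \right)^{-1/\beta}$, so that $\gamma_v^\beta = \phi \cdot \left( \beta \dol b + \tfrac{e^{-\beta b}}{e^{-\beta b}+v}\, \sqrt{-1}\,\partial b \wedge \overline\partial b \right)$. The metric is thus a conformal multiple (by $\phi$) of a \kah\ metric plus a rank-one correction term; its volume form is $\left(\gamma_v^\beta\right)^n = \phi^n \cdot \left(\text{bounded smooth $(n,n)$-form involving } \tfrac{e^{-\beta b}}{e^{-\beta b}+v}\right)$. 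The key quantitative point is that the ``dangerous'' factor $\tfrac{e^{-\beta b}}{e^{-\beta b}+v}$ is bounded between $0$ and $1$ on all of $X$, and $\phi$ itself satisfies $\phi = O\!\left( (\|\sigma_F\|^{2\beta}+v)^{-1/\beta} \right)$ with a matching lower bound; equivalently $\phi^{-1} \asymp (\|\sigma_F\|^{2\beta}+v)^{1/\beta}$.

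First I would recall that for a \kah\ metric of the form $\omega = \dol \Psi$ on a complex manifold, the scalar curvature is $S(\omega) = -\Delta_\omega \log \det(\omega) = -g^{i\overline j}\partial_i \partial_{\overline j} \log \det\!\left(\Psi_{k\overline l}\right)$. So I would first compute $\log \det\!\left( (\gamma_v^\beta)_{i\overline j} \right) = n \log \phi + \log \det\!\left( \beta b_{i\overline j} + \tfrac{e^{-\beta b}}{e^{-\beta b}+v} b_i b_{\overline j} \right)$. The second summand, call it $\log \det(A)$, is a smooth function on all of $X$ (the matrix $A$ extends smoothly across $F$ since $\beta \ge 1$ makes $e^{-\beta b} = \|\sigma_F\|^{2\beta}$ and $b_i = -\partial_i \log\|\sigma_F\|^2$ combine into smooth data there, as in Lemma \ref{gamma}), uniformly in $v \in [0,1]$; hence $\partial \overline\partial \log\det(A)$ is a bounded smooth $(1,1)$-form independent of the degeneration. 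The first summand gives $n\,\partial\overline\partial \log\phi = -\partial\overline\partial \log\!\left( e^{-\beta b}+v \right) = \partial\overline\partial\!\left( b - \tfrac1\beta \log(1 + v e^{\beta b}) \right)$... more carefully, $\log\phi = -\tfrac1\beta \log(e^{-\beta b}+v)$, so $\partial\overline\partial \log\phi = -\tfrac1\beta\,\partial\overline\partial \log(e^{-\beta b}+v)$, and a short computation expresses this in terms of $\dol b$ and $\partial b \wedge \overline\partial b$ with coefficients that are again bounded rational functions of $e^{-\beta b}/(e^{-\beta b}+v) \in [0,1]$. Consequently $\sqrt{-1}\partial\overline\partial \log\det(\gamma_v^\beta)$ is a $(1,1)$-form whose coefficients, in a fixed local frame, are bounded \emph{uniformly} in $v$ and in the point — in particular they do not blow up as $\|\sigma_F\| \to 0$.

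The conclusion then comes from tracing against the metric: $S(\gamma_v^\beta) = -\operatorname{tr}_{\gamma_v^\beta}\!\left( \sqrt{-1}\partial\overline\partial \log\det(\gamma_v^\beta) \right)$, and since $(\gamma_v^\beta)^{i\overline j} = \phi^{-1} A^{i\overline j}$ with $A^{-1}$ bounded and $\phi^{-1} = O\!\left( (\|\sigma_F\|^{2\beta}+v)^{1/\beta} \right)$, each contraction picks up exactly one factor of $\phi^{-1}$, giving $|S(\gamma_v^\beta)| \le C\,\phi^{-1} = O\!\left( (\|\sigma_F\|^{2\beta}+v)^{1/\beta} \right)$ as claimed. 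I expect the main obstacle to be bookkeeping: one must verify carefully that \emph{every} coefficient appearing in $\sqrt{-1}\partial\overline\partial\log\det(\gamma_v^\beta)$ — including the ones coming from differentiating the rank-one term $\tfrac{e^{-\beta b}}{e^{-\beta b}+v}b_i b_{\overline j}$, which a priori involves $\partial_i$ of $e^{-\beta b}/(e^{-\beta b}+v)$ and hence factors like $\beta e^{-\beta b} v/(e^{-\beta b}+v)^2$ — stays bounded uniformly, and this is where the hypothesis $\beta \ge 3$ is used (to absorb the powers of $b_i$, $b_{i\overline j}$ that are only smooth, not bounded, near $F$, by borrowing extra powers of $e^{-\beta b}$; with $\beta \ge 3$ there is enough room after the two derivatives in $\det$ and the one derivative in $\partial\overline\partial\log$). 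Once uniform boundedness of those coefficients is established, the estimate is immediate from the order of vanishing of $\phi^{-1}$.
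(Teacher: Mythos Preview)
Your proposal is correct and follows essentially the same route as the paper: the paper also computes the volume form $(\gamma_v^\beta)^n = \beta^n \phi^n \bigl(1 + \tfrac{e^{-\beta b}}{\beta(e^{-\beta b}+v)}\|\partial b\|^2\bigr)(\dol b)^n$, takes $-\dol\log$ to obtain the Ricci form explicitly as ${\rm Ric}(\dol b)$ plus two extra terms that vanish on $F$ (hence are smooth and bounded for fixed $v$), and then observes that tracing a bounded Ricci form against the inverse metric $\phi^{-1}A^{-1}$ produces the factor $\phi^{-1} = (\|\sigma_F\|^{2\beta}+v)^{1/\beta}$. Your factorisation $\log\det(\gamma_v^\beta) = n\log\phi + \log\det A$ and the subsequent boundedness argument are exactly this computation, only organised slightly differently; your caution about the rank-one term is well placed but, as in the paper, the hypothesis $\beta\in\mathbb{Z}_{\geq 1}$ already makes $e^{-\beta b}\,\partial b\wedge\overline\partial b$ extend smoothly across $F$, so the bookkeeping goes through.
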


\begin{proof}
This lemma follows from the similar way in the computation of the scalar curvature of $\omega_{0}$.\
In fact, since
\begin{eqnarray*}
\left( (\dol G_{v}^{\beta}(\beta b) \right)^{n}
&=& \beta^{n} \left( \frac{1}{e^{- \beta b} + v} \right)^{n / \beta} \left( 1 + \frac{  e^{- \beta b}}{\beta(e^{-\beta b} + v)} ||\partial b||^{2} \right) (\dol b)^{n},
\end{eqnarray*}
we have
\begin{eqnarray*}
{\rm Ric}(\dol G_{v}^{\beta}(\beta b))
&=& {\rm Ric}(\dol b) - \dol \log \left( 1 + \frac{ e^{- \beta b}}{\beta(e^{-\beta b} + v)} ||\partial b||^{2} \right) \\
&\hspace{19pt}+& \frac{n}{ \beta} \left( \frac{1}{e^{-\beta b}+v} \dol e^{-\beta b} + \frac{\beta }{ (e^{-\beta b}+v)^{2}} \sqrt{-1} \partial e^{-\beta b} \wedge \overline{\partial} e^{-\beta b} \right).
\end{eqnarray*}
Note that second and last terms above are zero on $F$.\
Thus, when we consider the scalar curvature $S(\gamma_{v}^{\beta})$, it is enough to see the term $1/(e^{-\beta b} + v)^{1/\beta} \dol b$ and the Ricci form ${\rm Ric}(\dol b)$.\
Therefore the desired result is obtained.
\end{proof}

\begin{rem}
\label{small F2}
If the value of the function $e^{- \beta b} = || \sigma_{F} ||^{2 \beta}$ is compatible with $v$, i.e., $|| \sigma_{F} ||^{2 \beta} \approx v$, we have the following estimate of $S(\dol G_{v}^{\beta}(\beta b))$ :
$$
S(\dol G_{v}^{\beta}(\beta b)) = O(1).
$$
However, we will consider the case that $|| \sigma_{F} ||^{2 \beta} \approx v^{k }$ for sufficiently large $k \in \mathbb{N}$ which will be specified later.\
Namely, it suffices to consider a {\it sufficiently} small neighborhood of $F$ defined by the inequality $|| \sigma_{F} ||^{2 \beta} \leq v^{k }$ and Lemma \ref{small F} holds on this region.\
\end{rem}

\subsection{
Ricci-flat {\kah} metric away from $D \cup F$}

In this subsection, we study an incomplete Ricci-flat {\kah} metric away from the support of the divisor $D+F$.\
Recall the setting in Theorem \ref{small scalar curvature}.\
Let $l>n$ and $m$ be positive integers such that there exists a holomorphic section $\sigma_{F} \in H^{0}(K_{X}^{-l} \otimes L_{X}^{m})$ which defines a smooth hypersurface $F \subset X$, i.e., $(\sigma_{F})_{0} = F$.\
It follows from the hypothesis of the average value $\hat{S}_{D}$ of the scalar curvature that divisors $D$ and $F$ intersect to each other.\
Set
$$
\xi := \sigma_{F} \otimes \sigma_{D}^{-m}.\
$$
Note that $\xi$ is a meromorphic section of $K_{X}^{-l}$.\
Then, define a singular and degenerate volume form $V$ by
$$
V := \xi^{-1/l} \wedge \overline{\xi^{-1/l}}.
$$
From the construction above, $V$ has finite volume on $X$ and its curvature form, i.e., the Ricci form, is zero on the complement of $D \cup F$.\
For the {\kah} metric $\theta_{X}$ on $X$, write
$$
V = f \theta_{X}^{n}
$$
for some non-negative function $f$ on $X$ with the normalized condition
$$
\int_{X} V = \int_{X}  f \theta_{X}^{n} = \int_{X} \theta_{X}^{n}.
$$
We know that $f$ is smooth away from $D \cup F$.\
From the result due to Yau \cite[Theorem 7]{Yau}, recall the solvability of a meromorphic complex Monge-Amp$\grave{{\rm e}}$re equation :
\begin{thm}
Let $L_{1}$ and $L_{2}$ be holomorphic line bundles over a compact {\kah} manifold $(X,\theta_{X})$.\
Let $s_{1}$ and $s_{2}$ be nonzero holomorphic sections of $L_{1}$ and $L_{2}$, respectively.\
Let $F$ be a smooth function on $X$ such that $\int_{X} |s_{1}|^{2k_{1}} |s_{2}|^{-2k_{2}} \exp (F) \theta_{X}^{n} = {\rm Vol} (X)$, where $k_{1} \geq 0$ and $k_{2} \geq 0$.\
Suppose that $\int_{X} |s_{2}|^{-2nk_{2}} < \infty$ for $n = \dim X$.\
Then, we can solve the following equation
$$
(\theta_{X} + \dol \varphi)^{n} = |s_{1}|^{2k_{1}} |s_{2}|^{-2k_{2}} \exp (F) \theta_{X}^{n}
$$
so that $\varphi$ is smooth outside divisors of $s_{1}$ and $s_{2}$ with $\sup_{X} \varphi < + \infty$.\
\end{thm}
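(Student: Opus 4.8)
The plan is a regularization argument: approximate the singular density $g := |s_{1}|^{2k_{1}}|s_{2}|^{-2k_{2}}\exp(F)$ by smooth positive densities, solve the corresponding Monge--Amp\`ere equations by Yau's solution of the Calabi conjecture, and pass to the limit. First I would write the equation as $(\theta_{X}+\dol\varphi)^{n} = g\,\theta_{X}^{n}$ with $\int_{X} g\,\theta_{X}^{n} = \mathrm{Vol}(X)$, and for $0 < \epsilon \le 1$ set
\[
g_{\epsilon} := c_{\epsilon}\,\bigl(|s_{1}|^{2}+\epsilon\bigr)^{k_{1}}\bigl(|s_{2}|^{2}+\epsilon\bigr)^{-k_{2}}\exp(F),
\]
where $c_{\epsilon} > 0$ normalizes $\int_{X} g_{\epsilon}\,\theta_{X}^{n} = \mathrm{Vol}(X)$. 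Since the divisors $(s_{1})_{0}$ and $(s_{2})_{0}$ have measure zero and the integrands are dominated, uniformly in $\epsilon$, by the fixed $L^{1}$ function $C\,|s_{2}|^{-2k_{2}}$, dominated convergence gives $g_{\epsilon}\to g$ in $L^{1}(X)$ and $c_{\epsilon}\to 1$. Each $g_{\epsilon}$ is smooth and strictly positive on $X$, so by Yau's theorem there is a smooth $\varphi_{\epsilon}$ with $\theta_{X}+\dol\varphi_{\epsilon} > 0$, normalized by $\sup_{X}\varphi_{\epsilon} = 0$, solving $(\theta_{X}+\dol\varphi_{\epsilon})^{n} = g_{\epsilon}\,\theta_{X}^{n}$.

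The crucial step is a uniform bound $\|\varphi_{\epsilon}\|_{C^{0}(X)} \le C$ with $C$ independent of $\epsilon$, and this is exactly where the hypothesis $\int_{X}|s_{2}|^{-2nk_{2}} < \infty$ enters. Since $|s_{1}|$ and $F$ are bounded on the compact $X$ while $\bigl(|s_{2}|^{2}+\epsilon\bigr)^{-k_{2}} \le |s_{2}|^{-2k_{2}}$, one has $g_{\epsilon} \le C\,|s_{2}|^{-2k_{2}}$ for all $\epsilon\le 1$; the integrability hypothesis says $|s_{2}|^{-2k_{2}}\in L^{n}(X,\theta_{X}^{n})$, hence $\in L^{p}$ for every $p\le n$, so $\sup_{0<\epsilon\le 1}\|g_{\epsilon}\|_{L^{p}(X,\theta_{X}^{n})} < \infty$ for every such $p$. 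Taking $p = 2 > 1$ (in complex dimension $n\ge 2$; in dimension one the equation is linear elliptic and the bound is classical) and applying Yau's $C^{0}$ estimate obtained via Moser iteration yields the uniform bound; combined with the normalization $\sup_{X}\varphi_{\epsilon}=0$ this gives $-C \le \varphi_{\epsilon}\le 0$.

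Next I would establish interior estimates. Fix a compact $K \Subset X\setminus\bigl((s_{1})_{0}\cup(s_{2})_{0}\bigr)$; on a neighborhood of $K$ the $g_{\epsilon}$ converge to $g$ smoothly and are bounded above and below away from zero together with all derivatives, uniformly in $\epsilon$. Combining this with the uniform $C^{0}$ bound, the localized Aubin--Yau second-order estimate — applying the maximum principle to $\eta^{2}e^{-A\varphi_{\epsilon}}\bigl(n+\Delta_{\theta_{X}}\varphi_{\epsilon}\bigr)$ for a cutoff $\eta$ supported near $K$ and $A$ large — gives $0 < n+\Delta_{\theta_{X}}\varphi_{\epsilon} \le C_{K}$ on $K$, hence two-sided bounds on the metrics $\theta_{X}+\dol\varphi_{\epsilon}$ over $K$ uniform in $\epsilon$. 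Calabi's third-order estimate (or the Evans--Krylov theorem together with Schauder theory) then upgrades this to uniform $C^{k,\alpha}$ bounds on compact subsets of $K$, for every $k$.

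Finally I would pass to the limit. By Arzel\`a--Ascoli and a diagonal argument over an exhaustion of $X\setminus\bigl((s_{1})_{0}\cup(s_{2})_{0}\bigr)$ by compacts, a subsequence $\varphi_{\epsilon_{j}}$ converges in $C^{\infty}$ on compact subsets of $X\setminus\bigl((s_{1})_{0}\cup(s_{2})_{0}\bigr)$ to a function $\varphi$ solving $(\theta_{X}+\dol\varphi)^{n} = g\,\theta_{X}^{n}$ there. Since the $\varphi_{\epsilon_{j}}$ are uniformly bounded $\theta_{X}$-plurisubharmonic functions, a further subsequence converges in $L^{1}(X)$ to a bounded $\theta_{X}$-plurisubharmonic function on $X$ which agrees with $\varphi$ off the pluripolar set $(s_{1})_{0}\cup(s_{2})_{0}$; thus $\varphi$ extends to $X$ with $-C\le\varphi\le 0$ and is smooth outside the divisors of $s_{1}$ and $s_{2}$, as claimed. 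The main obstacle throughout is the uniform $C^{0}$ estimate — controlling $\|\varphi_{\epsilon}\|_{L^{\infty}}$ in the presence of the blow-up of $g_{\epsilon}$ along $(s_{2})_{0}$, which is precisely the role of the integrability assumption $\int_{X}|s_{2}|^{-2nk_{2}}<\infty$; the interior estimates and the limiting argument are then the standard localized Calabi--Yau machinery.
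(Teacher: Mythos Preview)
The paper does not give its own proof of this statement: it is quoted verbatim as ``the result due to Yau \cite[Theorem 7]{Yau}'' and used as a black box, so there is no in-paper argument to compare against. Your regularization-and-limit sketch is exactly the strategy Yau employs in the cited reference, and the outline is correct; the integrability hypothesis $\int_X |s_2|^{-2nk_2}<\infty$ is used precisely as you say, to put the approximating densities $g_\epsilon$ in a fixed $L^p$ space with $p>1$ and thereby obtain a uniform $C^0$ bound on $\varphi_\epsilon$.

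One point to tighten: your interior $C^2$ step invokes a maximum-principle argument with a compactly supported cutoff $\eta$, but the Aubin--Yau Laplacian estimate does not localize cleanly this way, since the derivatives of $\eta$ produce uncontrolled terms. The cleaner route---and the one closer to Yau's original argument and to the P\u{a}un-type estimate the paper later uses (Theorem \ref{p c2 estimate})---is to keep the estimate global but insert a quasi-plurisubharmonic weight: apply the maximum principle to $e^{-A\varphi_\epsilon - B\psi}\,(n+\Delta_{\theta_X}\varphi_\epsilon)$ with $\psi$ a fixed $\theta_X$-psh function having logarithmic poles along $(s_1)_0\cup(s_2)_0$ (for instance $\psi = \log|s_1|^2 + \log|s_2|^2$). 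The lower bound on $\dol\log g_\epsilon$ needed in the Aubin--Yau inequality is then absorbed by $-B\dol\psi$, and one obtains $n+\Delta_{\theta_X}\varphi_\epsilon \le C e^{-B\psi}$ uniformly in $\epsilon$, which is exactly the interior bound you want on compacta of $X\setminus((s_1)_0\cup(s_2)_0)$. After that your Evans--Krylov/Schauder bootstrap and diagonal extraction go through as written.
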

Then, we can solve the following complex Monge-Amp$\grave{{\rm e}}$re equation
\begin{equation}
\label{solution by Yau}
(\theta_{X} + \dol \varphi)^{n} = f \theta_{X}^{n} = \xi^{-1/l} \wedge \overline{\xi^{-1/l}}.
\end{equation}
with $\varphi \in C^{\infty}(X \setminus D \cup F)$.\
Thus, we obtain a Ricci-flat {\kah} metric $\theta_{X} + \dol \varphi $ on the complement of $D \cup F$.\
For this solution $\varphi$, we obtain the following a priori estimate due to Ko\l odziej \cite{Ko} (see also \cite{GZ}):
\begin{thm}
If $f$ is in $L^{p}(\theta_{X}^{n})$ for some $p>1$, we have
$$
{\rm Osc}_{X} \varphi \leq C
$$
for some $C>0$ depending only on $\theta_{X}$ and $||f||_{L^{p}}$.\
\end{thm}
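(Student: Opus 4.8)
\noindent\emph{Proof strategy.} This is Ko\l odziej's $L^{\infty}$ a priori estimate for the complex {\ma} equation, and I would reproduce his pluripotential-theoretic argument. Write $\omega := \theta_{X}$, $\omega_{\varphi} := \omega + \dol\varphi$, $\mu := \omega_{\varphi}^{n} = f\,\omega^{n}$ and $V := \int_{X}\omega^{n} = \mu(X)$. Since only ${\rm Osc}_{X}\varphi$ matters and $\sup_{X}\varphi < +\infty$, normalize $\sup_{X}\varphi = 0$; it then suffices to bound $-\inf_{X}\varphi$ by a constant with the stated dependence (if the solution furnished by Yau's theorem is not a priori bounded below, first solve with smooth $f_{\varepsilon} \to f$ in $L^{p}$, prove a uniform estimate for the smooth solutions, and pass to the limit). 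For a Borel set $E \subset X$ introduce the {\ma} capacity
$$
{\rm Cap}(E) := \sup\Bigl\{ \textstyle\int_{E} (\omega + \dol u)^{n}\ :\ u \in {\rm PSH}(X,\omega),\ 0 \le u \le 1 \Bigr\}.
$$
The proof relies on two standard facts from Bedford--Taylor / Ko\l odziej theory on compact {\kah} manifolds: the comparison principle for the {\ma} operator acting on bounded $\omega$-psh functions, and the \emph{volume--capacity inequality} --- there are constants $C_{0},\alpha>0$, depending only on $(X,\omega)$, with ${\rm Vol}(E) := \int_{E}\omega^{n} \le C_{0}\exp\bigl(-\alpha\,{\rm Cap}(E)^{-1/n}\bigr)$ for every Borel $E$.

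The first step is a sublevel-set comparison: for all $s \ge 0$ and $t \in (0,1]$,
$$
t^{n}\,{\rm Cap}\bigl(\{\varphi < -s-t\}\bigr)\ \le\ \mu\bigl(\{\varphi < -s\}\bigr).
$$
Indeed, given $u \in {\rm PSH}(X,\omega)$ with $0 \le u \le 1$, put $v := t u - s - t$; then $-s-t \le v \le -s$, so $\{\varphi < -s-t\} \subset \{\varphi < v\} \subset \{\varphi < -s\}$, while $\omega + \dol v = (1-t)\omega + t(\omega + \dol u)$ expands, upon taking $n$-th powers, to a sum of positive currents dominating $t^{n}(\omega+\dol u)^{n}$. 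The comparison principle then gives $t^{n}\int_{\{\varphi<v\}}(\omega+\dol u)^{n} \le \int_{\{\varphi<v\}}(\omega+\dol v)^{n} \le \int_{\{\varphi<v\}}\omega_{\varphi}^{n} \le \mu(\{\varphi<-s\})$, and one takes the supremum over $u$. Combined with the standard uniform bound $||\psi||_{L^{1}(\omega^{n})} \le C_{2}(X,\omega)$ valid for every $\psi \in {\rm PSH}(X,\omega)$ with $\sup_{X}\psi = 0$ --- which gives ${\rm Vol}(\{\varphi < -s\}) \le C_{2}/s$ and hence, by H\"older's inequality with $1/p + 1/q = 1$, $\mu(\{\varphi < -s\}) \le ||f||_{L^{p}}(C_{2}/s)^{1/q}$ --- the sublevel comparison at $t=1$ shows that for any preassigned $\varepsilon_{0}>0$ there is an explicit $s_{0} = s_{0}(X,\omega,p,||f||_{L^{p}},\varepsilon_{0})$ with ${\rm Cap}(\{\varphi < -s_{0}-1\}) < \varepsilon_{0}$.

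The hypothesis $f \in L^{p}$ enters again, this time to dominate $\mu$ by a genuine \emph{power} of the capacity: from $\mu(E) \le ||f||_{L^{p}}\,{\rm Vol}(E)^{1/q}$, the volume--capacity inequality, and ${\rm Cap}(E) \le V$, the fact that $\exp(-\tfrac{\alpha}{q}x^{-1/n})$ decays faster than $x^{2}$ as $x \to 0^{+}$ yields $\mu(E) \le C_{1}\,{\rm Cap}(E)^{2}$ with $C_{1} = C_{1}(X,\omega,p,||f||_{L^{p}})$. Now choose $\varepsilon_{0}$ so small that $C_{1}2^{2n}\varepsilon_{0} < 1$, let $s_{0}$ be as above, and iterate in De Giorgi fashion: set $a(s) := {\rm Cap}(\{\varphi < -s\})$, $\sigma_{j} := s_{0} + 2 - 2^{-j}$ (so $\sigma_{0} = s_{0}+1$ and $\sigma_{j} \to s_{0}+2$), and $t_{j} := \sigma_{j+1}-\sigma_{j} = 2^{-j-1}$. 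The sublevel comparison and the capacity domination give $a(\sigma_{j+1}) \le t_{j}^{-n}\mu(\{\varphi<-\sigma_{j}\}) \le C_{1}2^{\,n(j+1)}a(\sigma_{j})^{2}$; putting $c_{j} := C_{1}2^{\,n(j+2)}a(\sigma_{j})$ one checks $c_{j+1} \le c_{j}^{2}$ and $c_{0} < C_{1}2^{2n}\varepsilon_{0} < 1$, whence $c_{j} \le c_{0}^{2^{j}} \to 0$ and so $a(\sigma_{j}) \to 0$. Since $\{\varphi < -(s_{0}+2)\} \subset \bigcap_{j}\{\varphi < -\sigma_{j}\}$, monotonicity of the capacity forces ${\rm Cap}(\{\varphi < -(s_{0}+2)\}) = 0$; a set of zero {\ma} capacity is $\omega^{n}$-null, and an $\omega$-psh function that is $\ge c$ almost everywhere is $\ge c$ everywhere (sub-mean value property), so $\varphi \ge -(s_{0}+2)$ on all of $X$. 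Together with $\sup_{X}\varphi = 0$ this gives ${\rm Osc}_{X}\varphi \le s_{0}+2$, a constant with the required dependence.

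The genuinely hard step is the volume--capacity inequality: it is the nontrivial pluripotential-theoretic input (due to Ko\l odziej, refining Alexander--Taylor), established by controlling the global extremal function of the sublevel set $E$ together with a Skoda-type $L^{1}$-integrability bound for its exponential. The remaining ingredients --- the comparison principle, the uniform $L^{1}$ bound on normalized $\omega$-psh functions, and the passages from $\{\varphi < c\}$ to $\{\varphi \le c\}$ and from ``almost everywhere'' to ``everywhere'' --- are by now routine in Bedford--Taylor theory on compact {\kah} manifolds. (One could alternatively avoid pluripotential theory altogether by arguing through an auxiliary complex {\ma} equation in the spirit of Guo--Phong--Tong, but the argument sketched above is the historically original one.)
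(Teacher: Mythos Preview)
Your sketch is correct and is precisely Ko\l odziej's pluripotential-theoretic argument in its modern form (comparison principle $+$ volume--capacity inequality $+$ De Giorgi iteration on sublevel sets), as presented for instance in \cite{GZ}. There is nothing to compare against, however: the paper does not supply its own proof of this theorem but simply quotes it from \cite{Ko} (with a pointer to \cite{GZ}) as a black-box a~priori estimate used to guarantee that the solution $\varphi$ of the degenerate Monge--Amp\`ere equation is bounded, so that the regularized-maximum gluing can be performed. Your write-up goes well beyond what the paper requires; the only minor remark is that the intersection $\bigcap_{j}\{\varphi<-\sigma_{j}\}$ equals $\{\varphi\le -(s_{0}+2)\}$ rather than the strict sublevel set, but the inclusion you state and the subsequent capacity/null-set argument are unaffected.
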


\subsection{
Gluing plurisubharmonic functions}

In this subsection, following \cite[Chapter I]{De}, we consider gluing {\kah} potentials, i.e., plurisubharmonic functions, obtained in previous subsections.\
Let $\rho \in C^{\infty}(\mathbb{R},\mathbb{R})$ be a nonnegative function with support in $[-1,1]$ such that $\int_{\mathbb{R}} \rho (h) dh = 1$ and $\int_{\mathbb{R}} h \rho (h) dh = 0$.\

\begin{lemma}[the regularized maximum]
\label{gluing lemma}
For arbitrary $\eta = (\eta_{1},...,\eta_{p}) \in (0,+\infty)^{p}$, the function
$$
M_{\eta}(t_{1},...,t_{p}) = \int_{\mathbb{R}^{p}} \max \{ t_{1} + h_{1},...,t_{p} + h_{p} \} \prod_{1 \leq j \leq p} \eta_{j}^{-1} \rho(h_{j}/\eta_{j}) dh_{1}...dh_{p}
$$
called the regularized maximum possesses the following properties $:$
\begin{description}
\item[a)] $M_{\eta}(t_{1},...,t_{p})$ is non decreasing in all variables, smooth and convex on $\mathbb{R}^{p} \hspace{2pt};$
\item[b)] $\max \{ t_{1},...,t_{p} \} \leq M_{\eta}(t_{1},...,t_{p}) \leq \max \{ t_{1} + \eta_{1},...,t_{p} + \eta_{p} \} \hspace{2pt};$
\item[c)] $M_{\eta}(t_{1},...,t_{p}) = M_{(\eta_{1},...,\hat{\eta_{j}},...,\eta_{p})}(t_{1},...,\hat{t_{j}},...,t_{p})$ if $t_{j}+\eta_{j} \leq \max_{k \neq j}\{ t_{k} - \eta_{k} \} \hspace{2pt} ;$
\item[d)] $M_{\eta}(t_{1}+a,...,t_{p}+a) = M_{\eta}(t_{1},...,t_{p})+a \hspace{2pt} ;$
\item[e)]  if $u_{1},...,u_{p}$ are plurisubharmonic and satisfy $H(u_{j})_{z}(\xi) \geq \gamma_{z}(\xi)$ where $z \mapsto \gamma_{z}$ is a continuous hermitian form on $TM$, then $u = M_{\eta}(u_{1},...,u_{p})$ is a plurisubharmonic and satisfies $Hu_{z}(\xi) \geq \gamma_{z}(\xi)$.\
\end{description}
\end{lemma}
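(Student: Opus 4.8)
The statement is classical (see \cite[Chapter I]{De}); the strategy is to exploit that $M_{\eta}$ is nothing but the convolution of the ordinary maximum $\mu(t_{1},\dots,t_{p}):=\max\{t_{1},\dots,t_{p}\}$ with the nonnegative kernel $\prod_{j}\eta_{j}^{-1}\rho(h_{j}/\eta_{j})$, which has total mass one and support contained in the box $\prod_{j}[-\eta_{j},\eta_{j}]$. First I would record that $\mu$ is convex, $1$-Lipschitz and non-decreasing in each variable, and that convolution with such a kernel preserves convexity and monotonicity while producing a $C^{\infty}$ function; this gives a). For b), the upper bound is immediate from $h_{j}\leq\eta_{j}$ on the support of the kernel together with monotonicity of $\mu$, and the lower bound follows from Jensen's inequality applied to the convex function $h\mapsto\mu(t_{1}+h_{1},\dots,t_{p}+h_{p})$, using that each factor $\eta_{j}^{-1}\rho(h_{j}/\eta_{j})$ has mean zero by the normalization $\int_{\mathbb{R}}h\rho(h)\,dh=0$. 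For c), if $t_{j}+\eta_{j}\leq\max_{k\neq j}\{t_{k}-\eta_{k}\}$ then on the support of the kernel one has $t_{j}+h_{j}\leq t_{j}+\eta_{j}\leq\max_{k\neq j}\{t_{k}-\eta_{k}\}\leq\max_{k\neq j}\{t_{k}+h_{k}\}$, so the $j$-th slot never realizes the maximum; the $h_{j}$-integration then just contributes the factor $\int_{\mathbb{R}}\rho=1$ and we are reduced to $p-1$ variables. Property d) is clear since $\mu$ is invariant under adding the same constant to all variables.

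The heart of the matter is e), and I would first treat the case that all $u_{j}$ are smooth. Then $u=M_{\eta}(u_{1},\dots,u_{p})$ is smooth and, writing $\partial_{j}$ for $\partial/\partial t_{j}$, the chain rule for the complex Hessian at a point $z$ on a vector $\xi$ reads
\begin{equation*}
Hu_{z}(\xi)=\sum_{j}\partial_{j}M_{\eta}\big(u_{1}(z),\dots,u_{p}(z)\big)\,H(u_{j})_{z}(\xi)+\sum_{j,k}\partial_{j}\partial_{k}M_{\eta}\big(u_{1}(z),\dots,u_{p}(z)\big)\,\langle\partial u_{j},\xi\rangle\,\overline{\langle\partial u_{k},\xi\rangle}.
\end{equation*}
By a) the matrix $(\partial_{j}\partial_{k}M_{\eta})$ is positive semi-definite, so the second sum is $\geq0$; differentiating the identity in d) with respect to the common shift gives $\sum_{j}\partial_{j}M_{\eta}\equiv1$, while $\partial_{j}M_{\eta}\geq0$ by monotonicity, so the first sum is a convex combination of the quantities $H(u_{j})_{z}(\xi)\geq\gamma_{z}(\xi)$ and is therefore $\geq\gamma_{z}(\xi)$. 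Hence $Hu_{z}(\xi)\geq\gamma_{z}(\xi)$, and in particular $u$ is plurisubharmonic.

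For general plurisubharmonic $u_{j}$ the Hessian identity above is not literally available, and this is the step I expect to be the main obstacle. I would handle it by regularization: fixing a Hermitian metric $\omega$ on $M$, locally write each $u_{j}$ as a decreasing limit of smooth plurisubharmonic functions $u_{j}^{(\nu)}$ with $H\big(u_{j}^{(\nu)}\big)\geq\gamma-\tfrac{1}{\nu}\omega$, apply the smooth case to get $H\big(M_{\eta}(u_{1}^{(\nu)},\dots,u_{p}^{(\nu)})\big)\geq\gamma-\tfrac{1}{\nu}\omega$, and then let $\nu\to\infty$, using that $M_{\eta}$ is continuous and non-decreasing so that $M_{\eta}(u_{1}^{(\nu)},\dots,u_{p}^{(\nu)})\searrow M_{\eta}(u_{1},\dots,u_{p})$; the lower Hessian bound then passes to the limit in the sense of currents. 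The only delicate ingredient is the existence of such regularizations with controlled complex Hessian, which is a standard local fact for plurisubharmonic functions; granting it, e) follows from the smooth computation.
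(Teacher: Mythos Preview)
Your proof is correct and complete. Note, however, that the paper does not actually prove this lemma: it is stated with a citation to Demailly \cite[Chapter I]{De} and used as a black box (the subsequent remark only explains why the explicit formula is needed later). Your argument is essentially the standard one given in Demailly --- convolution structure for a)--d), chain rule plus convexity and $\sum_{j}\partial_{j}M_{\eta}=1$ for e) in the smooth case, then approximation --- so there is nothing to compare against in the paper itself.
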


\begin{rem}
Lemma \ref{gluing lemma} is a key in the proof of Richberg theorem (see \cite[p.43]{De}).\
In our case, we have already prepared three plurisubharmonic functions and must compute the Ricci form of the glued {\kah} metric later.\
Therefore, we need the explicit formula of the glued function.
\end{rem}
In addition, we obtain
\begin{lemma}
\label{M derivatives}
There exists a constant $C>0$ such that
$$
\left| \frac{\partial^{|\alpha|} M_{\eta}}{\partial t^{\alpha}} (t) \right| \leq C \min \{ \eta_{j} | \alpha_{j} \neq 0 \} \prod_{\alpha_{i} \neq 0} \eta_{i}^{-\alpha_{i}}
$$
for any multi index $\alpha = (\alpha_{i})_{i}$ with $1 \leq |\alpha| \leq 4$.\
\end{lemma}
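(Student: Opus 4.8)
The plan is to convert every $t$-derivative of $M_\eta$ into a derivative of the mollifier $\rho$ by a change of variables, and then to exploit the vanishing of $\int_{\mathbb{R}}\rho^{(k)}$ for $k\ge 1$ in order to gain one factor of $\min\{\eta_j\mid\alpha_j\ne 0\}$. First I would substitute $u_j=t_j+h_j$ in the defining integral, obtaining
$$M_\eta(t)=\int_{\mathbb{R}^p}\max\{u_1,\dots,u_p\}\prod_{j}\eta_j^{-1}\rho\!\left(\frac{u_j-t_j}{\eta_j}\right)du .$$
Since $\rho\in C^\infty$ has support in $[-1,1]$ while $\max\{u_1,\dots,u_p\}$ grows only linearly, differentiation under the integral sign is justified; each $\partial/\partial t_j$ produces a factor $-\eta_j^{-1}$ and replaces $\rho$ by $\rho'$ in the $j$-th slot, so after applying $\partial^{|\alpha|}/\partial t^\alpha$ and undoing the substitution (now $v_j=(u_j-t_j)/\eta_j$, $du_j=\eta_j\,dv_j$) one gets
$$\frac{\partial^{|\alpha|}M_\eta}{\partial t^\alpha}(t)=(-1)^{|\alpha|}\Bigl(\prod_j\eta_j^{-\alpha_j}\Bigr)\int_{[-1,1]^p}\max\{t_1+\eta_1v_1,\dots,t_p+\eta_pv_p\}\prod_j\rho^{(\alpha_j)}(v_j)\,dv ,$$
the domain shrinking to $[-1,1]^p$ because each $\rho^{(\alpha_j)}$ is supported there.

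The key step comes next. Because $1\le|\alpha|$, there is an index $j_0$ with $\alpha_{j_0}\ne 0$, and I would choose it so that $\eta_{j_0}=\min\{\eta_j\mid\alpha_j\ne 0\}$. Since $\int_{\mathbb{R}}\rho^{(k)}=0$ for every $k\ge 1$, subtracting from $\max\{t_1+\eta_1v_1,\dots,t_p+\eta_pv_p\}$ any quantity $c(v)$ that does not depend on $v_{j_0}$ leaves the integral unchanged; I take $c(v)$ to be the same maximum with the $j_0$-th entry $t_{j_0}+\eta_{j_0}v_{j_0}$ replaced by $t_{j_0}$. As $\max$ is $1$-Lipschitz in each argument, on $[-1,1]^p$ one has $\bigl|\max\{t_1+\eta_1v_1,\dots,t_p+\eta_pv_p\}-c(v)\bigr|\le\eta_{j_0}|v_{j_0}|\le\eta_{j_0}$. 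Substituting this and estimating crudely gives
$$\left|\frac{\partial^{|\alpha|}M_\eta}{\partial t^\alpha}(t)\right|\le\eta_{j_0}\Bigl(\prod_j\eta_j^{-\alpha_j}\Bigr)\prod_j\bigl\|\rho^{(\alpha_j)}\bigr\|_{L^1(\mathbb{R})}.$$
Finally, since $\|\rho^{(0)}\|_{L^1}=1$ and at most $|\alpha|\le 4$ of the $\alpha_j$ are nonzero, the remaining product is bounded by a constant $C$ depending only on $\rho$; combined with $\prod_j\eta_j^{-\alpha_j}=\prod_{\alpha_i\ne 0}\eta_i^{-\alpha_i}$ and $\eta_{j_0}=\min\{\eta_j\mid\alpha_j\ne 0\}$, this is exactly the asserted inequality.

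The only genuinely non-routine point — what I would regard as the heart of the argument — is the renormalization step: one must not bound the integrand using the (unbounded) function $\max\{t_1+\eta_1v_1,\dots,t_p+\eta_pv_p\}$ itself, but rather subtract a $v_{j_0}$-independent quantity and invoke the mean-zero property of $\rho^{(\alpha_{j_0})}$, so that the Lipschitz modulus of $\max$ supplies the gain $\eta_{j_0}$; this is also precisely where the hypothesis $|\alpha|\ge 1$ is used. Everything else — justifying differentiation under the integral, the two changes of variables, and the Lipschitz estimate for $\max$ — is standard and I would only sketch it.
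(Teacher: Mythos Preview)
Your proof is correct. The paper states this lemma without proof, so there is no argument to compare against; your approach---moving the $t$-dependence onto the mollifier via $u_j=t_j+h_j$, differentiating, and then using $\int\rho^{(k)}=0$ together with the $1$-Lipschitz property of $\max$ to gain the factor $\min\{\eta_j:\alpha_j\ne 0\}$---cleanly supplies the missing details.
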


Recall that the {\kah} potential of $\omega_{0}$ is given by
$$
\Theta(t) = \frac{n(n-1)}{\hat{S}_{D}} \exp \left( \frac{\hat{S}_{D}}{n(n-1)}t \right).
$$
For $\kappa \in (0,1)$, set
\begin{equation}
\label{potential F}
\tilde{G}_{v}^{\beta}(b) := G_{v}^{\beta}(\beta b) + \kappa \Theta(t).
\end{equation}
This constant $\kappa$ will be specified later.\
For this {\kah} potential, we have
\begin{lemma}
\label{estimate g}
For the complete {\kah} metric $\dol \tilde{G}_{v}^{\beta}(b)$ on $X \setminus D$, we have
\begin{eqnarray}
S(\dol(\tilde{G}_{v}^{\beta}(b))) = \left\{ \begin{array}{ll}
O(||\sigma_{D}||^{2\hat{S}_{D}/n(n-1)}) & \mbox{near $D$,}\\
O((||\sigma_{F}||^{2 \beta } + v)^{1 / \beta}) & \mbox{near $F$.}\\
\end{array} \right.
\end{eqnarray}
\end{lemma}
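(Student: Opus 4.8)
The plan is to exploit the additive decomposition
$$
\dol \tilde{G}_{v}^{\beta}(b) \;=\; \dol G_{v}^{\beta}(\beta b) \,+\, \kappa\,\dol \Theta(t) \;=\; \gamma_{v}^{\beta} \,+\, \kappa\,\omega_{0}
$$
coming from (\ref{potential F}), and to regard $\dol\tilde{G}_{v}^{\beta}(b)$ near $D$ as a small perturbation of $\kappa\omega_{0}$ and near $F$ as a small perturbation of $\gamma_{v}^{\beta}$. Here ``near $D$'' is understood on a fixed neighbourhood of $D$ on which $\sigma_{F}$ does not vanish (so that $G_{v}^{\beta}(\beta b)$ is a fixed smooth function), and symmetrically for ``near $F$''; the parameters $v>0$, $\beta \geq 3$ and $\kappa \in (0,1)$ are fixed, so implied constants may depend on them. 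Recall from the previous subsections that $S(\omega_{0}) = O(||\sigma_{D}||^{2\hat{S}_{D}/n(n-1)})$ and that $\omega_{0}$, together with its covariant derivatives, is controlled (asymptotically conical geometry, \cite{Aoi1}, \cite{BK}); and recall from Lemma \ref{small F} that $S(\gamma_{v}^{\beta}) = O((||\sigma_{F}||^{2\beta}+v)^{1/\beta})$.

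\emph{The estimate near $D$.} Since $\gamma_{v}^{\beta}$ extends to a {\kah} metric on all of $X$ (Lemma \ref{gamma}), we have $\gamma_{v}^{\beta} \leq C\theta_{X}$ there, while expanding $\omega_{0} = \dol\Theta(t)$ with $\Theta$ as in (\ref{potential D}) gives
$$
\omega_{0} \;=\; ||\sigma_{D}||^{-2\hat{S}_{D}/n(n-1)}\left(\theta_{X} + \tfrac{\hat{S}_{D}}{n(n-1)}\sqrt{-1}\,\partial t \wedge \overline{\partial} t\right) \;\geq\; ||\sigma_{D}||^{-2\hat{S}_{D}/n(n-1)}\,\theta_{X}.
$$
Hence $\gamma_{v}^{\beta} \leq C\,||\sigma_{D}||^{2\hat{S}_{D}/n(n-1)}\,\omega_{0}$, so the eigenvalues of $(\kappa\omega_{0})^{-1}\gamma_{v}^{\beta}$ are $O(||\sigma_{D}||^{2\hat{S}_{D}/n(n-1)})$. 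I would then write
$$
{\rm Ric}\!\left(\dol\tilde{G}_{v}^{\beta}(b)\right) \;=\; {\rm Ric}(\omega_{0}) \,-\, \dol \log \frac{(\gamma_{v}^{\beta}+\kappa\omega_{0})^{n}}{(\kappa\omega_{0})^{n}},
$$
note that the ratio equals $1 + O(||\sigma_{D}||^{2\hat{S}_{D}/n(n-1)})$ and -- since $\gamma_{v}^{\beta}$ is a fixed smooth form and $\omega_{0}$ has controlled derivatives -- that the same order bounds its first two derivatives, so $\dol\log(\,\cdot\,)$ has $\omega_{0}$-norm $O(||\sigma_{D}||^{2\hat{S}_{D}/n(n-1)})$. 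Taking the trace against $\dol\tilde{G}_{v}^{\beta}(b) \geq \kappa\omega_{0}$, and using ${\rm tr}_{\omega_{0}}{\rm Ric}(\omega_{0}) = S(\omega_{0})$ together with the boundedness of ${\rm Ric}(\omega_{0})$ in $\omega_{0}$-norm, both ${\rm tr}_{\dol\tilde{G}_{v}^{\beta}(b)}{\rm Ric}(\omega_{0})$ and ${\rm tr}_{\dol\tilde{G}_{v}^{\beta}(b)}\dol\log(\,\cdot\,)$ come out $O(||\sigma_{D}||^{2\hat{S}_{D}/n(n-1)})$, which is the first line.

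\emph{The estimate near $F$, and the main difficulty.} Near $F$ the roles are exchanged: now $\kappa\omega_{0} = \kappa\,\dol\Theta(t)$ is a fixed smooth form with $\kappa\omega_{0} \leq C\theta_{X}$, while by the identity computed in the proof of Lemma \ref{gamma},
$$
\gamma_{v}^{\beta} \;\geq\; \beta\left(\frac{1}{e^{-\beta b}+v}\right)^{1/\beta}\dol b \;\geq\; c\,(||\sigma_{F}||^{2\beta}+v)^{-1/\beta}\,\theta_{X},
$$
so $\kappa\omega_{0} \leq C(||\sigma_{F}||^{2\beta}+v)^{1/\beta}\gamma_{v}^{\beta}$. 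Running the same argument with $\gamma_{v}^{\beta}$ in place of $\kappa\omega_{0}$, with $(||\sigma_{F}||^{2\beta}+v)^{1/\beta}$ in place of $||\sigma_{D}||^{2\hat{S}_{D}/n(n-1)}$, and with Lemma \ref{small F} in place of the estimate for $S(\omega_{0})$ -- i.e.\ expanding $\,{\rm Ric}(\dol\tilde{G}_{v}^{\beta}(b)) = {\rm Ric}(\gamma_{v}^{\beta}) - \dol\log\frac{(\kappa\omega_{0}+\gamma_{v}^{\beta})^{n}}{(\gamma_{v}^{\beta})^{n}}\,$ and tracing against $\dol\tilde{G}_{v}^{\beta}(b) \geq \gamma_{v}^{\beta}$ -- yields the second line. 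In both regions the main obstacle is precisely the estimate of the correction term $\dol\log\bigl((\,\cdot\,)^{n}/(\,\cdot\,)^{n}\bigr)$ and of its trace: the content is not that the scalar curvatures of $\omega_{0}$ and $\gamma_{v}^{\beta}$ are small (which is already known), but that a perturbation that is of lower order \emph{together with its derivatives} leaves the scalar curvature of that same order. Near $D$ this rests on the controlled (asymptotically conical) asymptotics of $\omega_{0}$ (\cite{Aoi1}, \cite{BK}); near $F$ it rests, exactly as in the proof of Lemma \ref{small F}, on the explicit formula for $\gamma_{v}^{\beta}$ -- in particular on the vanishing on $F$ of the ``extra'' terms of ${\rm Ric}(\gamma_{v}^{\beta})$, whose traces against $\gamma_{v}^{\beta}$ then produce the gain $(||\sigma_{F}||^{2\beta}+v)^{1/\beta}$. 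Finally, both estimates are stated away from $D\cap F$; for $v$ fixed, $\gamma_{v}^{\beta}$ in fact remains a bounded smooth form up to $D\cap F$, so the near-$D$ bound persists on a full neighbourhood of $D$ in $X$, whereas the genuinely two-variable behaviour near $D\cap F$ is the subject of Theorem \ref{solution}.
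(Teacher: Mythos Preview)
Your argument is correct, but it takes a somewhat different and more laborious route than the paper. You split $\mathrm{Ric}\bigl(\dol\tilde{G}_v^\beta(b)\bigr)$ into $\mathrm{Ric}$ of the dominant piece ($\omega_0$ near $D$, $\gamma_v^\beta$ near $F$) plus the correction $\dol\log\bigl((\cdot)^n/(\cdot)^n\bigr)$, and then separately bound the trace of each, which forces you to control two covariant derivatives of the volume ratio and to invoke the asymptotically conical derivative bounds for $\omega_0$. The paper instead bypasses this splitting entirely: it observes that the weighted volume form $\|\sigma_D\|^{2+2\hat S_D/(n-1)}\bigl(\dol\tilde{G}_v^\beta(b)\bigr)^n$ extends to a \emph{smooth} volume form on $X$, hence $\mathrm{Ric}\bigl(\dol\tilde{G}_v^\beta(b)\bigr)$ itself is a smooth $(1,1)$-form on $X$ (in particular $O(\theta_X)$), so tracing against the metric---which satisfies $\dol\tilde{G}_v^\beta(b)\ge\kappa\omega_0\ge c\,\|\sigma_D\|^{-2\hat S_D/n(n-1)}\theta_X$---immediately gives the first line; the second line is handled by the analogous weight $(\|\sigma_F\|^{2\beta}+v)^{n/\beta}$. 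The advantage of the paper's argument is that it reduces everything to the single observation ``the Ricci form is globally bounded by $\theta_X$,'' with no need to differentiate the perturbation; your approach is more modular (it would work even if only the dominant piece, rather than the full metric, had a smoothly extending weighted volume form) but pays for this with the extra derivative estimates.
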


\begin{proof}
First, we study the behavior of the scalar curvature near $D$.\
Since
$$
|| \sigma_{D} ||^{2 + 2 \hat{S}/(n-1)} \left( \dol(\tilde{G}_{v}^{\beta}(b)) \right)^{n}
$$
is a smooth volume form on $X$, the Ricci form of $ \dol(\tilde{G}_{v}^{\beta}(b))$ given by
\begin{eqnarray*}
{\rm Ric} (\dol(\tilde{G}_{v}^{\beta}(b))
&=& - \left( \frac{\hat{S}}{n-1} + 1 \right) \theta_{X} - \dol \log || \sigma_{D} ||^{2 + 2 \hat{S}/(n-1)} \left( \dol(\tilde{G}_{v}^{\beta}(b)) \right)^{n}\\
\end{eqnarray*}
is defined smoothly on X.\
Recall that
$$
\dol(\tilde{G}_{v}^{\beta}(b)) = \kappa \omega_{0} + \gamma_{v}^{\beta}.
$$
As $\omega_{0}$ is of asymptotically conical geometry, we have the desired result near $D$.\
Similarly, the volume form
$$
(||\sigma_{F}||^{2 \beta } + v)^{n / \beta} \left( \dol(\tilde{G}_{v}^{\beta}(b)) \right)^{n}
$$
is smooth near $F \setminus (D \cap F)$.\
Then, the following identity
\begin{eqnarray*}
{\rm Ric} (\dol(\tilde{G}_{v}^{\beta}(b))
&=& \frac{n}{ \beta} \left( \frac{1}{e^{-\beta b}+v} \dol e^{-\beta b} + \frac{\beta }{ (e^{-\beta b}+v)^{2}} \sqrt{-1} \partial e^{-\beta b} \wedge \overline{\partial} e^{-\beta b} \right)\\
&\hspace{15pt}-& \dol \log (||\sigma_{F}||^{2 \beta } + v)^{n / \beta} \left( \dol(\tilde{G}_{v}^{\beta}(b)) \right)^{n}
\end{eqnarray*}
implies the desired result near $F$.
\end{proof}

In summary, we have prepared the three strictly plurisubharmonic functions $\Theta(t) = (n(n-1)/\hat{S}_{D})\exp ((\hat{S}_{D}/n(n-1))t), \tilde{G}_{v}^{\beta}(b) = G_{v}^{\beta}(\beta b) + \kappa \Theta(t), t + \varphi = \log || \sigma_{D} ||^{-2} + \varphi$ whose scalar curvature is under control.\
From Lemma \ref{gluing lemma}, we immediately have

\begin{prop}
For parameters $c, v, \eta$ and $\kappa \in (0,1)$, a function defined by
$$M_{c,v,\eta} := M_{\eta} \left( \Theta(t),\tilde{G}_{v}^{\beta}(b) ,t+\varphi+c \right)$$
is a strictly plurisubharmonic function on $X \setminus (D \cup F)$.\
Here, the functions above are defined in (\ref{herm}), (\ref{potential D}), (\ref{function g}), (\ref{solution by Yau}) and (\ref{potential F}).\
\end{prop}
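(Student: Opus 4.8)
The plan is to read off the claim from property~(e) of Lemma~\ref{gluing lemma}. What must be verified is that each of the three functions $u_{1}:=\Theta(t)$, $u_{2}:=\tilde{G}_{v}^{\beta}(b)$ and $u_{3}:=t+\varphi+c$ is finite and $C^{\infty}$ on $X\setminus(D\cup F)$ and has positive-definite complex Hessian there, and that these three Hessians admit a common continuous positive-definite lower bound.

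First I would record the Levi forms. One has $\dol u_{1}=\omega_{0}$, the complete {\kah} metric of $(\ref{potential D})$ on $X\setminus D$, which is positive definite on the smaller open set $X\setminus(D\cup F)$. For $u_{2}=G_{v}^{\beta}(\beta b)+\kappa\Theta(t)$, Lemma~\ref{gamma} gives $\dol G_{v}^{\beta}(\beta b)=\gamma_{v}^{\beta}>0$ on all of $X$, so $\dol u_{2}=\gamma_{v}^{\beta}+\kappa\omega_{0}$ is positive definite on $X\setminus D\supset X\setminus(D\cup F)$; moreover $u_{2}$ is finite on $X\setminus(D\cup F)$ because $G_{v}^{\beta}(\beta b)$ is finite off $F$ while $\Theta(t)$ is finite off $D$. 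For $u_{3}=t+\varphi+c$ one has $\dol u_{3}=\theta_{X}+\dol\varphi$, which is the Ricci-flat {\kah} metric on $X\setminus(D\cup F)$ obtained from $(\ref{solution by Yau})$, hence a positive-definite {\kah} form there; and $u_{3}$ is finite on $X\setminus(D\cup F)$ since $t=\log\|\sigma_{D}\|^{-2}$ is finite off $D$ and $\varphi$ is smooth and bounded off $D\cup F$, while adding the constant $c$ changes nothing. Thus $u_{1},u_{2},u_{3}$ are smooth and strictly plurisubharmonic on $X\setminus(D\cup F)$.

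Next I would pass from the pointwise positivity of the $H(u_{j})$ to a common lower bound and apply the gluing lemma. Since strict plurisubharmonicity is a local condition, it suffices to argue on an arbitrary coordinate ball $U\Subset X\setminus(D\cup F)$: by continuity of the three Levi forms together with the compactness of $\overline{U}$, there is $\varepsilon_{U}>0$ with $H(u_{j})_{z}(\xi)\ge\varepsilon_{U}|\xi|^{2}$ for all $j=1,2,3$, all $z\in U$ and all $\xi\in T_{z}X$, where $|\cdot|$ is the Euclidean norm in the coordinates on $U$. Property~(e) of Lemma~\ref{gluing lemma}, applied on $U$ with $\gamma_{z}(\xi)=\varepsilon_{U}|\xi|^{2}$, then shows that $M_{c,v,\eta}=M_{\eta}(u_{1},u_{2},u_{3})$ is plurisubharmonic and satisfies $H(M_{c,v,\eta})_{z}(\xi)\ge\varepsilon_{U}|\xi|^{2}$ on $U$; smoothness of $M_{c,v,\eta}$ on $U$ follows from property~(a) of the same lemma and the smoothness of the $u_{j}$. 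As $U$ was arbitrary, $M_{c,v,\eta}$ is strictly plurisubharmonic on $X\setminus(D\cup F)$. Equivalently, one may assemble a single global continuous positive-definite $\gamma$ by patching these local bounds against a fixed background Hermitian metric via a partition of unity, and apply~(e) once.

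I do not expect a genuine obstacle; the statement is essentially immediate from Lemma~\ref{gluing lemma} once the three potentials are in place. The only points requiring a little care are the simultaneous finiteness of $\Theta(t)$, $\tilde{G}_{v}^{\beta}(b)$ and $t+\varphi+c$ on $X\setminus(D\cup F)$ --- which is precisely why the singularity of $\Theta(t)$ along $D$ and that of $G_{v}^{\beta}(\beta b)$ along $F$ must both be removed --- and the construction of the common continuous positive-definite lower bound $\gamma$ on the noncompact manifold $X\setminus(D\cup F)$, handled by the localization (or partition-of-unity) argument above.
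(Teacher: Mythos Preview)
Your proposal is correct and follows the same approach as the paper: the paper simply states that the proposition follows immediately from Lemma~\ref{gluing lemma}, and you have spelled out in detail why property~(e) of that lemma applies, namely by verifying that each of $\Theta(t)$, $\tilde{G}_{v}^{\beta}(b)$, and $t+\varphi+c$ is smooth and strictly plurisubharmonic on $X\setminus(D\cup F)$ and then extracting a common continuous positive-definite lower bound locally. Your treatment is more thorough than the paper's one-line justification but is otherwise the same argument.
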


\begin{rem}
From a priori estimate due to Ko\l odziej \cite{Ko}, the solution $\varphi$ is bounded on $X$.\
Thus, by taking $c>0$ sufficiently large, $\varphi$ can be ignored when we consider the value of $M_{c,v,\eta}$.\
\end{rem}
By taking a sufficiently large $c>0$, we have
\begin{eqnarray}
\label{global}
M_{c,v,\eta}=\left\{ \begin{array}{ll}
\Theta(t) & \hspace{10pt} \mbox{near $D$ and away from $F$,} \\
\tilde{G}_{v}^{\beta}(b) & \hspace{10pt} \mbox{near $F$ and away from $D$,} \\
t + \varphi +c & \hspace{10pt} \mbox{away from $F$ and  $D$.} \\
\end{array} \right.
\end{eqnarray}

Set
$$
\omega_{c,v,\eta} := \dol M \left( \Theta(t),\tilde{G}_{v}^{\beta}(b), t + \varphi +c \right).
$$

The reason why we consider the second {\kah} potential which contains the term $\kappa \Theta(t)$ is that we want to make $\omega_{c,v,\eta}$ complete on $X \setminus D$.\
The function $M_{c,v,\eta}$ is defined on $X \setminus (D \cup F)$.\
On the other hand, Lemma \ref{gamma} implies that $\omega_{c,v,\eta}$ is defined on $X \setminus D$ since the {\kah} metric $\gamma_{v}^{\beta}$ is a smooth {\kah} metric on $X$.\
From (\ref{global}), we know that the scalar curvature of $\omega_{c,v,\eta}$ is small on three regions above (in particular, away from $D$ and $F$, $S(\omega_{c,v,\eta}) = 0$ since $t + \varphi +c$ is a {\kah} potential whose Ricci form is zero).\

The explicit formula of $\omega_{c,v,\eta}$ is written as
\begin{eqnarray*}
\omega_{c,v,\eta}
&=& \frac{\partial M_{c,v,\eta}}{\partial t_{1}} \omega_{0} + \frac{\partial M_{c,v,\eta}}{\partial t_{2}} ( \gamma_{v}^{\beta} + \kappa \omega_{0} ) + \frac{\partial M_{c,v,\eta}}{\partial t_{3}} \dol (t + \varphi)\\
&\hspace{15pt}+& \left[ 
\begin{array}{ccc}
\partial \Theta(t) & \partial \tilde{G}_{v}^{\beta}(b) &  \partial (t + \varphi)
\end{array} 
\right]
\left[ 
\begin{array}{c}
\frac{\partial^{2} M_{c,v,\eta}}{\partial t_{i} \partial t_{j}}
\end{array} 
\right]
\left[ 
\begin{array}{ccc}
\overline{\partial} \Theta(t) & \overline{\partial} \tilde{G}_{v}^{\beta}(b) & \overline{\partial} (t + \varphi)
\end{array} 
\right]^{t}.
\end{eqnarray*}
Thus, when we compute the scalar curvature of $\omega_{c,v,\eta}$, higher order derivatives of $\varphi$ arise in the components of the Ricci tensor of $\omega_{c,v,\eta}$.\
So, we must study the behavior of higher order derivatives of $\varphi$ near $D \cup F$.\

\section{
Proof of Theorem \ref{solution}}
\label{sec:8}

In this section, we prove Theorem \ref{solution}.\
Firstly, we use the $C^{2}$-estimate due to P$\check{{\rm a}}$un \cite{Pa} (see also \cite{Di}, \cite[p.366, Theorem 14.3]{GZ}) for the solution $\phi$ of the complex {\ma} equation (\ref{solution by Yau}) in the previous section to obtain the estimate of the ellipticity. i.e., the maximal ratio of the maximal eigenvalue to the minimal eigenvalue, of the {\kah} metric $\theta_{X} + \dol \varphi$.\
Secondly, we study how the $C^{2,\epsilon}$-estimate of $\varphi$ depends on the ellipticity of $\theta_{X} + \dol \varphi$ on a fixed relatively compact domain in $X \setminus (D \cup F)$.\
Finally, we estimate the higher order derivatives of $\varphi$ by using the Schauder estimate.\

\subsection{The $C^{2}$-estimate}
To study the behavior of the higher order derivatives of $\varphi$, the elliptic operator defined by the {\kah} metric $\theta_{X} + \dol \varphi$ plays an important role.\
To obtain the ellipticity of $\theta_{X} + \dol \varphi$, we use the $C^{2}$-estimate due to P$\check{{\rm a}}$un \cite{Pa} (see also \cite{Di}, \cite[p.366, Theorem 14.3]{GZ}).\
\begin{thm}
\label{p c2 estimate}
Let $dV$ be a smooth volume form.\
Assume that $\varphi \in {\rm PSH} (X, \theta_{X})$ satisfies
$$
(\theta_{X} + \dol \varphi)^{n} = e^{\psi_{+} - \psi_{-}}dV
$$
with $\displaystyle\int_{X} \varphi \theta_{X}^{n} = 0$.\
Here, $\psi_{+}, \psi_{-}$ are quasi-plurisubharmonic functions on $X$.\
Assume that we are given $C>0$ and $p>1$ such that
\begin{description}
\item[(i)] $\dol \psi_{+} \geq - C \theta_{X}$ and $\sup_{X} \psi_{+} \leq  C$.\
\item[(ii)] $\dol \psi_{-} \geq - C \theta_{X}$ and $|| e^{- \psi_{-}} ||_{L^{p}} \leq  C$.\
\end{description}
Then there exists $A>0$ depending only on $\theta_{X}$, $p$ and $C$ such that
$$
0 \leq \theta_{X} + \dol \varphi \leq A e^{- \psi_{-}} \theta_{X}.
$$
\end{thm}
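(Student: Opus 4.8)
\emph{Sketch of the argument.} The plan is to prove Theorem \ref{p c2 estimate} by a second order (Laplacian) a priori estimate of Aubin--Yau type, combined with the Ko\l odziej $L^{\infty}$-estimate recalled above, the degeneracy of the equation along the polar sets of $\psi_{\pm}$ being handled by smooth approximation. Write $\omega' := \theta_{X} + \dol\varphi \geq 0$ and $dV = h\,\theta_{X}^{n}$ with $h>0$ smooth. Since the largest eigenvalue of $\omega'$ relative to $\theta_{X}$ never exceeds ${\rm tr}_{\theta_{X}}\omega' = n + \Delta_{\theta_{X}}\varphi$, it suffices to establish the \emph{weighted} bound
\[
e^{\psi_{-}}\bigl(n + \Delta_{\theta_{X}}\varphi\bigr) \;\leq\; A \qquad \text{on } X ,
\]
with $A$ depending only on $\theta_{X}$, $dV$, $p$ and $C$; this is precisely $0 \leq \omega' \leq A e^{-\psi_{-}}\theta_{X}$. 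As a preliminary, the density of the equation relative to $\theta_{X}^{n}$ is $e^{\psi_{+}-\psi_{-}}h$, and by (i), (ii) one has $\| e^{\psi_{+}-\psi_{-}}h \|_{L^{p}(\theta_{X}^{n})} \leq e^{C}\|h\|_{\infty}\|e^{-\psi_{-}}\|_{L^{p}} \leq e^{C}\|h\|_{\infty}C$, so the Ko\l odziej estimate applies and gives ${\rm Osc}_{X}\varphi \leq C'$ (hence also $\sup_{X}\varphi \leq C'$, since $\int_{X}\varphi\,\theta_{X}^{n}=0$) with $C'$ depending only on $\theta_{X}$, $p$ and $C$.

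For the weighted $C^{2}$-bound one works on the open dense set where $\omega'$ is a smooth K\"ahler metric. Writing $f := \psi_{+}-\psi_{-}+\log h$ for the log-density, so that $(\omega')^{n} = e^{f}\theta_{X}^{n}$ and ${\rm Ric}(\omega') = {\rm Ric}(\theta_{X}) - \dol f$, one has the Aubin--Yau / Chern--Lu inequality
\[
\Delta_{\omega'}\log\bigl(n + \Delta_{\theta_{X}}\varphi\bigr) \;\geq\; \frac{\Delta_{\theta_{X}}f}{\,n + \Delta_{\theta_{X}}\varphi\,} \;-\; B\,{\rm tr}_{\omega'}\theta_{X},
\]
where $B\geq 0$ is a lower bound for the holomorphic bisectional curvature of the fixed metric $\theta_{X}$. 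One then runs the maximum principle on the weighted test function
\[
Q \;:=\; \psi_{-} + \log\bigl(n + \Delta_{\theta_{X}}\varphi\bigr) - A\,\varphi ,
\]
with $A = A(\theta_{X},C)$ large. Using $\Delta_{\omega'}\psi_{-} = {\rm tr}_{\omega'}(\dol\psi_{-}) \geq -C\,{\rm tr}_{\omega'}\theta_{X}$ (by (ii)), $\Delta_{\omega'}\varphi = {\rm tr}_{\omega'}(\omega'-\theta_{X}) = n - {\rm tr}_{\omega'}\theta_{X}$, and the displayed inequality, one shows that at a point $x_{0}$ where $Q$ attains its maximum the quantity ${\rm tr}_{\omega'}\theta_{X}(x_{0})$ is bounded; the Hessian bounds on $\psi_{\pm}$ from (i), (ii) --- used through the identity ${\rm Ric}(\omega') = {\rm Ric}(\theta_{X}) - \dol f$, which converts the otherwise uncontrolled $\Delta_{\theta_{X}}\psi_{-}$ into curvature terms --- are exactly what renders the contributions of $\Delta_{\theta_{X}}\psi_{\pm}$ harmless. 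Then the elementary eigenvalue inequality $n + \Delta_{\theta_{X}}\varphi \leq \frac{1}{(n-1)!}({\rm tr}_{\omega'}\theta_{X})^{n-1}e^{f}$ together with $e^{f} \leq e^{C}\|h\|_{\infty}e^{-\psi_{-}}$ (here $\sup_{X}\psi_{+}\leq C$ enters) bounds $e^{\psi_{-}}(n + \Delta_{\theta_{X}}\varphi)$ at $x_{0}$, and since $Q(x_{0}) = \max_{X}Q$ and ${\rm Osc}_{X}\varphi \leq C'$, this bound propagates to all of $X$ with constants depending only on $\theta_{X}$, $dV$, $p$ and $C$.

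The main obstacle is twofold. First, the maximum principle above is being run where $\omega'$ is a genuine smooth metric, which is a \emph{noncompact} locus on which $Q$ need not attain its supremum and on which $\dol f$ is only a current; I would circumvent this by approximation: regularize $\psi_{\pm}$ to smooth quasi-plurisubharmonic functions $\psi_{\pm}^{\varepsilon}$ satisfying the bounds in (i), (ii) with constants independent of $\varepsilon$, solve the non-degenerate equations $(\theta_{X}+\dol\varphi_{\varepsilon})^{n} = e^{\psi_{+}^{\varepsilon}-\psi_{-}^{\varepsilon}+c_{\varepsilon}}dV$ by Yau's theorem \cite{Yau} with $c_{\varepsilon}$ the uniformly bounded normalizing constants, carry out the two previous paragraphs on the compact manifold $X$ to get $e^{\psi_{-}^{\varepsilon}}(n + \Delta_{\theta_{X}}\varphi_{\varepsilon}) \leq A$ uniformly in $\varepsilon$, and let $\varepsilon \to 0$, using the a priori bound of \cite{Ko} and uniqueness to identify $\lim_{\varepsilon}\varphi_{\varepsilon}$ with $\varphi$ and interior elliptic estimates for locally smooth convergence off the polar set. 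Second, and this is the genuinely delicate point, the weighted test function $Q$ must be organized so that the singular contributions of $\psi_{+}$ and $\psi_{-}$ genuinely cancel or get absorbed into the ${\rm tr}_{\omega'}\theta_{X}$ term, and every constant produced along the way must be checked to depend only on $\theta_{X}$, $dV$, $p$ and $C$; this bookkeeping is the heart of the argument.
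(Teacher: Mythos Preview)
The paper does not prove this theorem: it is quoted from P\u{a}un \cite{Pa} (see also \cite{Di}, \cite[Theorem~14.3]{GZ}) and used as a black box. Your sketch is essentially the standard proof found in those references --- Ko\l odziej for the $C^{0}$-bound, then a weighted Aubin--Yau Laplacian estimate with test function $Q=\psi_{-}+\log{\rm tr}_{\theta_{X}}\omega'-A\varphi$, closed by approximation --- so there is nothing to compare.

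One point deserves sharpening. You write that the identity ${\rm Ric}(\omega')={\rm Ric}(\theta_{X})-\dol f$ ``converts the otherwise uncontrolled $\Delta_{\theta_{X}}\psi_{-}$ into curvature terms''. This is not quite the mechanism: the Aubin--Yau inequality still produces the term $-\Delta_{\theta_{X}}\psi_{-}/{\rm tr}_{\theta_{X}}\omega'$, and $\Delta_{\theta_{X}}\psi_{-}$ has no upper bound. The actual cancellation is that, since $\dol\psi_{-}+C\theta_{X}\geq 0$ and the largest eigenvalue of $\omega'$ relative to $\theta_{X}$ is at most $u:={\rm tr}_{\theta_{X}}\omega'$, one has
\[
\Delta_{\theta_{X}}\psi_{-}+nC \;=\; {\rm tr}_{\theta_{X}}(\dol\psi_{-}+C\theta_{X}) \;\leq\; u\,{\rm tr}_{\omega'}(\dol\psi_{-}+C\theta_{X}) \;=\; u\bigl(\Delta_{\omega'}\psi_{-}+C\,{\rm tr}_{\omega'}\theta_{X}\bigr),
\]
so that $-\Delta_{\theta_{X}}\psi_{-}/u \geq -\Delta_{\omega'}\psi_{-}-C\,{\rm tr}_{\omega'}\theta_{X}$. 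The $-\Delta_{\omega'}\psi_{-}$ on the right is then cancelled \emph{exactly} by the $+\Delta_{\omega'}\psi_{-}$ contributed by the $\psi_{-}$ term in $Q$, and the remaining $-C\,{\rm tr}_{\omega'}\theta_{X}$ is absorbed by choosing $A$ large. You flag this as ``the genuinely delicate point'' but do not spell it out; once made explicit, the rest of your argument (eigenvalue inequality, propagation via ${\rm Osc}_{X}\varphi$, approximation) goes through as written.
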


Set $\psi_{+} := \log || \sigma_{D} ||^{2m/l}$ and $\psi_{-} := \log || \sigma_{F} ||^{2/l}$.\
Then, Theorem \ref{p c2 estimate} implies the following inequality
\begin{equation}
\label{our c2 estimate}
0 \leq \theta_{X} + \dol \varphi \leq A ||\sigma_{F}||^{-2/l} \theta_{X}.
\end{equation}
Recall that the singular and degenerate volume form
\begin{equation}
\label{our ma}
(\theta_{X} + \dol \varphi)^{n} = \xi^{-1/l} \wedge \overline{\xi^{-1/l}}, \hspace{8pt} \xi = \sigma_{F} \otimes \sigma_{D}^{-m}
\end{equation}
vanishes along $D$ with order $2m/l$ and has a pole along $F$ of order $2/l$.\
So, we obtain the behavior of the product of the eigenvalues of the {\kah} metric $\theta_{X} + \dol \varphi$.\
From (\ref{our c2 estimate}) and (\ref{our ma}), we can estimate the eigenvalues of $\theta_{X} + \dol \varphi$.\
Namely, the maximal eigenvalue $\Lambda$ and the minimal eigenvalue $\lambda$ of the {\kah} metric $\theta_{X} + \dol \varphi$ are estimated as follows :
$$
\Lambda = O(|| \sigma_{F} ||^{-2/l} ), \hspace{7pt} \lambda^{-1} = O(|| \sigma_{D} ||^{-2m/l} ).
$$
In the next subsection, to consider the third and the forth order derivatives, we recall the $C^{2,\epsilon}$-estimate of $\varphi$.\

\subsection{
The $C^{2,\epsilon}$-estimate}
This subsection follows from \cite[Chapter 14]{GZ}.\
In this subsection, we study the relation between the ellipticity of $\theta_{X} + \dol \varphi$ and the $C^{2,\epsilon}$-estimate of $\varphi$.\
This subsection is the core of the proof of Theorem \ref{solution} because the estimate of the higher order derivatives of the solution $\varphi$ are obtained by the $C^{2,\epsilon}$-estimate and the Schauder estimate.\

Let $\mathcal{H}$ be the set of all $n \times n$ Hermitian matrices and set
$$
\mathcal{H}_{+} := \{ A \in \mathcal{H} | A > 0 \}.
$$
In addition, for $0 < \lambda < \Lambda < \infty$, let $S(\lambda,\Lambda)$ be the subset of $\mathcal{H}_{+}$ whose eigenvalues lie in the interval $[\lambda,\Lambda]$.\
First, recall the following result from linear algebra (see \cite[p.454, Lemma 17.13]{GT}, \cite[p.372, Lemma 14.10]{GZ}):

\begin{lemma}
\label{orthogonal lemma}
One can find unit vectors $\zeta_{1},...,\zeta_{N} \in \mathbb{C}^{n}$ and $0 < \lambda_{*} < \Lambda_{*} < \infty$, depending only on $n,\lambda$ and $\Lambda$, such that every $A \in S(\lambda,\Lambda)$ can be written as
$$
A = \sum_{k =1}^{N} \beta_{k} \zeta_{k} \otimes \overline{\zeta_{k}}, \hspace{7pt} {\rm i.e.,} \hspace{7pt} a_{i,\overline{j}} = \sum_{k} \beta_{k} \zeta_{ki} \overline{\zeta}_{kj},
$$
where $\beta_{k} \in [\lambda_{*},\Lambda_{*}]$.\
The vectors $\zeta_{1},...,\zeta_{N} \in \mathbb{C}^{n}$ can be chosen so that they contain a given orthonormal basis of $\mathbb{C}^{n}$.\
\end{lemma}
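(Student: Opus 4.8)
The plan is to prove this purely finite‑dimensional statement by combining a \emph{local decomposition lemma} with a compactness argument and a simple padding trick. I would first record that $S(\lambda,\Lambda)$, being the set of Hermitian matrices $A$ with $\lambda I\le A\le\Lambda I$, is a compact subset of the real vector space $\mathcal{H}$. The heart of the matter is the following local claim: for each $A_{0}\in S(\lambda/2,\Lambda)$, fixing an orthonormal eigenbasis $\{e_{1}^{(0)},\dots,e_{n}^{(0)}\}$ of $A_{0}$ and setting
$$
W_{A_{0}}:=\{e_{i}^{(0)}\}_{1\le i\le n}\ \cup\ \Big\{\tfrac{1}{\sqrt{2}}(e_{i}^{(0)}\pm e_{j}^{(0)}),\ \tfrac{1}{\sqrt{2}}(e_{i}^{(0)}\pm\sqrt{-1}\,e_{j}^{(0)})\Big\}_{1\le i<j\le n},
$$
there exist a radius $r=r(n,\lambda,\Lambda)>0$ and a constant $C=C(n,\lambda,\Lambda)>0$ such that every Hermitian $B$ with $\|B-A_{0}\|\le r$ can be written as $B=\sum_{\zeta\in W_{A_{0}}}\beta_{\zeta}\,\zeta\otimes\overline{\zeta}$ with $0\le\beta_{\zeta}\le C$. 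Note that $W_{A_0}$ consists of unit vectors and that the associated rank‑one matrices span $\mathcal{H}$ (dimension $n^{2}=n+2\binom{n}{2}$).

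To prove the local claim I would expand $B$ in the basis $\{e_{i}^{(0)}\}$ and use the elementary identities expressing $e_{i}^{(0)}\otimes\overline{e_{j}^{(0)}}+e_{j}^{(0)}\otimes\overline{e_{i}^{(0)}}$ and $\sqrt{-1}\,(e_{i}^{(0)}\otimes\overline{e_{j}^{(0)}}-e_{j}^{(0)}\otimes\overline{e_{i}^{(0)}})$ as differences of the rank‑one matrices attached to $\tfrac{1}{\sqrt 2}(e_i^{(0)}\pm e_j^{(0)})$, resp.\ $\tfrac{1}{\sqrt 2}(e_i^{(0)}\pm\sqrt{-1}\,e_j^{(0)})$. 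Since $A_{0}$ is diagonal in its own eigenbasis, for $B$ close to $A_{0}$ the off‑diagonal entries $b_{i\bar j}$ are small while the diagonal entries $b_{i\bar i}$ are close to the eigenvalues $\mu_{i}\in[\lambda/2,\Lambda]$; introducing a small fixed offset $\delta=\delta(n,\lambda,\Lambda)$ into the coefficients of the four rotated vectors of each pair $(i,j)$ renders those coefficients positive, bounded, and able to realize an off‑diagonal entry of either sign, and subtracting their contribution to the diagonal leaves the coefficient of $e_{i}^{(0)}\otimes\overline{e_{i}^{(0)}}$ still positive and bounded provided $r$ and $\delta$ are chosen small in terms of $n,\lambda,\Lambda$. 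All the resulting bounds are invariant under the unitary change of basis carrying $\{e_{i}^{(0)}\}$ to the standard basis, hence depend only on $n,\lambda,\Lambda$.

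Given the local claim, I would cover $S(\lambda/2,\Lambda)$ by the balls $\{\|B-A_{0}\|<r\}$, extract a finite subcover centred at $A_{0}^{(1)},\dots,A_{0}^{(s)}$, and take $\{\zeta_{1},\dots,\zeta_{N}\}$ to be the union of the sets $W_{A_{0}^{(t)}}$ together with the prescribed orthonormal basis of $\mathbb{C}^{n}$. Every $B\in S(\lambda/2,\Lambda)$ then admits $B=\sum_{k}\beta_{k}\,\zeta_{k}\otimes\overline{\zeta_{k}}$ with $\beta_{k}\in[0,C]$ (assign coefficient $0$ to the vectors not in the relevant $W_{A_{0}^{(t)}}$). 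The only remaining defect is the lower bound, which I would repair by padding: set $M:=\sum_{k=1}^{N}\zeta_{k}\otimes\overline{\zeta_{k}}>0$ and $\epsilon:=\lambda/(2\|M\|)>0$; then for $A\in S(\lambda,\Lambda)$ one has $\tfrac{\lambda}{2}I\le A-\epsilon M\le\Lambda I$, i.e.\ $A-\epsilon M\in S(\lambda/2,\Lambda)$, so writing $A-\epsilon M=\sum_{k}\beta_{k}\,\zeta_{k}\otimes\overline{\zeta_{k}}$ as above and adding back $\epsilon M$ gives $A=\sum_{k}(\beta_{k}+\epsilon)\,\zeta_{k}\otimes\overline{\zeta_{k}}$ with every coefficient in $[\epsilon,\,C+\epsilon]$. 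Taking $\lambda_{*}:=\epsilon$ and $\Lambda_{*}:=C+\epsilon$, both depending only on $n,\lambda,\Lambda$, and noting that the prescribed orthonormal basis sits inside $\{\zeta_{k}\}$ by construction, completes the proof.

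I expect the main obstacle to be the bookkeeping inside the local claim: arranging that, after the two signs $\pm$ and $\pm\sqrt{-1}$ of the rotated vectors have been used to realise off‑diagonal entries of arbitrary sign, the coefficients of the pure vectors $e_{i}^{(0)}\otimes\overline{e_{i}^{(0)}}$ stay nonnegative and uniformly bounded — this forces a careful (but routine) coordination of the offset $\delta$ and the radius $r$ in terms of $n,\lambda,\Lambda$. Everything else is either soft (compactness of $S(\lambda,\Lambda)$) or a one‑line observation (the $\epsilon M$ padding to turn a lower bound of $0$ into a genuine positive lower bound).
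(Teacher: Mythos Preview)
Your proposal is correct and follows essentially the same compactness-and-covering strategy as the argument the paper cites from \cite[Lemma~17.13]{GT} and \cite[Lemma~14.10]{GZ} (the paper itself gives no proof, only the brief sketch in the remark immediately following the lemma): cover the compact set $S(\lambda/2,\Lambda)$ by neighbourhoods on which a positive, bounded rank-one decomposition is available, extract a finite subcover, and then pad by $\epsilon M$ to secure the uniform lower bound $\lambda_{*}$. Your local charts use $2n^{2}-n$ unit vectors (an eigenbasis of $A_{0}$ together with the $\pm$ and $\pm\sqrt{-1}$ rotated pairs) and an explicit offset $\delta$, whereas the cited argument works with $n^{2}$ vectors whose rank-one matrices form a basis of $\mathcal{H}$ and covers by the open parallelepipeds $U(\zeta_{1},\dots,\zeta_{n^{2}})=\{\sum_{k}\beta_{k}\,\zeta_{k}\otimes\overline{\zeta}_{k}:0<\beta_{k}<2\Lambda\}$; this is a cosmetic variation of the same idea.

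One small side remark, not a gap in your proof of the lemma as stated: the paper's subsequent Remark asserts that $N$ can in fact be taken to depend only on $n$, and this is used later. In your construction the ball radius is $r\sim\lambda/n$ while $S(\lambda/2,\Lambda)$ has diameter of order $\Lambda$, so the number of charts --- hence $N$ --- a priori depends on $\Lambda/\lambda$. If you also wish to recover that sharper statement you would need an additional homogeneity/rescaling argument.
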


\begin{rem}
\label{uniform N}
In the proof of Lemma \ref{orthogonal lemma}, they use the following covering
$$
U(\zeta_{1},...,\zeta_{n^{2}}) = \left\{ \sum_{k} \beta_{k} \zeta_{k} \otimes \overline{\zeta}_{k} \hspace{5pt} | \hspace{5pt} 0 < \beta_{k} < 2\Lambda \right\}
$$
of the compact subset $S(\lambda/2,\Lambda)$ (see \cite[p.454, Lemma 17.13]{GT}, \cite[p.372, Lemma 14.10]{GZ}).\
Here, $\zeta_{1},...,\zeta_{n^{2}} \in \mathbb{C}^{n}$ are unit vectors such that the matrices $\zeta_{k} \otimes \overline{\zeta}_{k}$ span $\mathcal{H}$ over $\mathbb{R}$.\
Thus, it follows from the form of the covering $U(\zeta_{1},...,\zeta_{n^{2}})$ that the number $N$ in Lemma \ref{orthogonal lemma} is depending only on the dimension $n$.\
In particular, $N$ is independent of the ellipticity of $\theta_{X} + \dol \varphi$.\
\end{rem}

Take local holomoriphic coordinates $(z^{i})_{i=1}^{n} = (z^{1},z^{2},...,z^{n-2},w_{F},w_{D})$ such that $\{ w_{F} = 0 \} = F$ and $\{ w_{D} = 0 \} = D$.\
On this coordinate chart, we can write $t = a + \log |w_{D}|^{-2}$ for some smooth plurisubharmonic function $a$.\
Since $\theta_{X} + \dol \varphi = \dol (a + \varphi)$ on this coordinate chart, it is enough to consider the following complex Monge-Amp$\grave{{\rm e}}$re equation 
$$
\det (u_{i,\overline{j}}) = f
$$
on an open subset $\Omega \Subset \mathbb{C}^{n} \setminus (D \cup F) $ by setting
\begin{equation}
\label{u and phi}
u = a + \varphi.
\end{equation}
It follows from our construction that we may assume that the function $f$ is a form of
$$
f = |w_{F}|^{-2/l} |w_{D}|^{2m/l}.
$$
Fix an unit vector $\zeta \in \mathbb{C}^{n}$.\
Differentiating the following equation :
$$
\log \det (u_{i,\overline{j}}) = \log f,
$$
we have
$$
u^{i,\overline{j}} u_{\zeta,\overline{\zeta},i,\overline{j}} = (\log f)_{\zeta,\overline{\zeta}} + u^{i,\overline{l}}u^{k,\overline{j}} u_{\zeta,i,\overline{j}} u_{\overline{\zeta},k,\overline{l}} \geq (\log f)_{\zeta,\overline{\zeta}}=0.
$$
Here we use the standard Einstein convention and the notation $(u^{i,\overline{j}}) = ((u_{i,\overline{j}})^{t})^{-1}$.\
Set 
$$
a^{i,\overline{j}} = f u^{i,\overline{j}}.
$$
Then, for any $i$, we have
$$
(a^{i,\overline{j}})_{\overline{j}} = f_{\overline{j}} u^{i,\overline{j}} - f u^{i,\overline{l}}u^{k,\overline{j}} u_{\overline{j},k,\overline{l}} = f u^{k,\overline{l}} u_{\overline{j},k,\overline{l}} u^{i,\overline{j}} - f u^{i,\overline{l}}u^{k,\overline{j}} u_{\overline{j},k,\overline{l}} = 0.
$$
Thus, we obtain
$$
(a^{i,\overline{j}} u_{\zeta,\overline{\zeta},i})_{\overline{j}} = (a^{i,\overline{j}})_{\overline{j}} u_{\zeta,\overline{\zeta},i} + a^{i,\overline{j}} u_{\zeta,\overline{\zeta},i,\overline{j}} \geq f  (\log f)_{\zeta,\overline{\zeta}} = 0.
$$
Note that $u_{\zeta,\overline{\zeta}}$ is a subsolution of the equation $L v = 0$, where $L v := \sum_{i,j} (a^{i,\overline{j}} v_{i})_{\overline{j}}$.\
The assumption of $u$ and the later lemma ensure that the operator $L$ is {\it uniformly} elliptic (in the real sense).\
Then, we have the following estimate (see \cite[Theorem 8.18]{GT}).

\begin{lemma}
\label{Harnack constant}
The weak Harnack inequality
$$
r^{-2n} \int_{B_{r}} (\sup_{B_{4r}} u_{\zeta,\overline{\zeta}} - u_{\zeta,\overline{\zeta}}) \leq C_{H} (\sup_{B_{4r}} u_{\zeta,\overline{\zeta}} - \sup_{B_{r}} u_{\zeta,\overline{\zeta}}),
$$
holds.\
Here, $B_{4r} := B(z_{0},4r) \subset \Omega$ with $d(z_{0}, \partial \Omega ) > 4r$.\
Moreover, in our case, we have the following estimate of the constant $C_{H}$ in Harnack inequality :
$$
C_{H} = O( \Lambda/\lambda ).
$$
\end{lemma}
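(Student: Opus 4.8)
The plan is to recognize the stated estimate as the weak Harnack inequality of De Giorgi--Nash--Moser, applied to a suitable nonnegative supersolution, and then to read off the dependence of the constant on the ellipticity ratio by inspecting the iteration that proves it.

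First I would set $w := \sup_{B_{4r}} u_{\zeta,\overline{\zeta}} - u_{\zeta,\overline{\zeta}}$ on $B_{4r}$. Since $L$ kills constants and $u_{\zeta,\overline{\zeta}}$ was just shown to satisfy $L u_{\zeta,\overline{\zeta}} \geq 0$, the function $w$ is a nonnegative supersolution, $Lw \leq 0$, on $B_{4r}$; it is moreover smooth there because $u = a + \varphi$ is smooth on $\Omega \Subset \mathbb{C}^{n} \setminus (D \cup F)$. Written in the underlying real coordinates of $\mathbb{C}^{n} = \mathbb{R}^{2n}$, the divergence-form operator $Lv = \sum_{i,j}(a^{i,\overline{j}}v_{i})_{\overline{j}}$ remains in divergence form $\sum_{p,q}\partial_{p}(b^{pq}\partial_{q}\,\cdot\,)$ with smooth coefficients and \emph{no} lower-order terms. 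Since $\inf_{B_{r}} w = \sup_{B_{4r}} u_{\zeta,\overline{\zeta}} - \sup_{B_{r}} u_{\zeta,\overline{\zeta}}$ and $|B_{r}| \asymp r^{2n}$, the claimed inequality is exactly
\[
r^{-2n}\,\| w \|_{L^{1}(B_{r})} \;\leq\; C_{H}\,\inf_{B_{r}} w ,
\]
which is \cite[Theorem 8.18]{GT} applied to $w$ (the hypothesis $B_{4r}\subset\Omega$ is precisely what we have; see also \cite[Chapter 14]{GZ}), after using H\"older's inequality to dominate the $L^{1}(B_{r})$-norm by the $L^{p}(B_{2r})$-norm with the exponent $p>1$ of that theorem, at the cost of a factor depending only on $2n$. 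Because $L$ carries no lower-order or inhomogeneous terms, the constant produced by \cite[Theorem 8.18]{GT} (for a fixed admissible $p$) depends only on $2n$ and on the ellipticity ratio of $L$.

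Next I would pin down that ratio. The coefficient matrix is $(a^{i,\overline{j}}) = f\,(u^{i,\overline{j}}) = \det(u_{k,\overline{l}})\,(u^{i,\overline{j}})$, i.e.\ the adjugate (cofactor matrix) of the complex Hessian $(u_{i,\overline{j}})$; on $\Omega$ this Hessian differs from the matrix of $\theta_{X}+\dol\varphi$ only by fixed smooth positive factors coming from $a$, so its eigenvalues are comparable to those of $\theta_{X}+\dol\varphi$ and hence lie in an interval with endpoints comparable to $\lambda$ and $\Lambda$. For a positive Hermitian matrix with eigenvalues $\mu_{1}\leq\cdots\leq\mu_{n}$ the adjugate has eigenvalues $\prod_{k\neq i}\mu_{k}$, so its condition number equals $\mu_{n}/\mu_{1}$; passing to the real form (which preserves eigenvalues, doubling their multiplicities) shows that $L$ is uniformly elliptic with ellipticity ratio $O(\Lambda/\lambda)$.

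It then remains to verify that the constant in \cite[Theorem 8.18]{GT}, in the absence of lower-order terms, depends at most linearly on the ellipticity ratio; together with the previous paragraph this yields $C_{H}=O(\Lambda/\lambda)$. This last point is the heart of the argument and the step I expect to be the main obstacle: one has to revisit the Moser iteration behind \cite[Theorem 8.18]{GT} --- the Caccioppoli/energy inequality, the Sobolev embedding, the geometric progression of exponents, and the logarithmic estimate feeding the John--Nirenberg lemma --- and bookkeep how the ellipticity ratio enters each ingredient, using that all lower-order terms vanish and that the balls $B_{r}\subset B_{4r}$ sit at a fixed scale ratio, so as to keep the overall dependence linear.
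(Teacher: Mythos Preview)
Your reduction to the weak Harnack inequality for $w=\sup_{B_{4r}}u_{\zeta,\overline\zeta}-u_{\zeta,\overline\zeta}$, and your computation of the ellipticity ratio of $L$ via the adjugate of $(u_{i,\overline{j}})$, are both correct and do yield the displayed inequality with \emph{some} constant $C_{H}$. The genuine gap is your final step. The Moser iteration behind \cite[Theorem~8.18]{GT} does \emph{not} produce a constant linear in the ellipticity ratio: as the paper itself records in the remark immediately following this lemma, the proof in \cite{GT} gives $C_{H}=C_{n}^{\sqrt{\Lambda/\lambda}}$, i.e.\ exponential in $(\Lambda/\lambda)^{1/2}$. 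This exponential dependence is essentially intrinsic to the De~Giorgi--Nash--Moser scheme for general divergence-form operators, and no amount of bookkeeping through Caccioppoli, Sobolev, and John--Nirenberg will reduce it to a polynomial, let alone linear, bound. So the step you correctly flagged as ``the main obstacle'' is in fact a dead end along this route.

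The paper takes a different path. The phrase ``in our case'' in the lemma is essential: the paper does not try to improve the \emph{universal} Harnack constant for the operator $L$. Instead it invokes the problem-specific two-sided pointwise bounds on the function $u_{\zeta,\overline\zeta}$ itself, namely $u_{\zeta,\overline\zeta}=O(\Lambda)=O(\|\sigma_{F}\|^{-2/l})$ and $u_{\zeta,\overline\zeta}^{-1}=O(\lambda^{-1})=O(\|\sigma_{D}\|^{-2m/l})$, obtained by combining P\u{a}un's $C^{2}$-estimate $0\le\theta_{X}+\dol\varphi\le A\|\sigma_{F}\|^{-2/l}\theta_{X}$ with the Monge--Amp\`ere equation (the product of the eigenvalues controls the minimal one from below). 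With these a~priori bounds on the specific function $u_{\zeta,\overline\zeta}$ in hand, the paper reads off the $O(\Lambda/\lambda)$ control on the ratio of the two sides directly, without ever improving the abstract Harnack constant. The ingredient your proposal is missing is precisely this: you treated $u_{\zeta,\overline\zeta}$ as a generic subsolution of $L$, whereas the paper exploits that, in the present setting, it is pointwise pinned between $\lambda$ and $\Lambda$.
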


\begin{proof}
It suffices to show the estimate of the constant $C_{H}$.\
In our case, we will only consider the behavior of $\varphi$ in the neighborhood of $D \cup F$ and the $C^{2}$-estimate of $\varphi$ implies that
\begin{eqnarray*}
u_{\zeta,\overline{\zeta}} &=& O(|| \sigma_{F} ||^{-2/l}) = O(\Lambda)\\
u_{\zeta,\overline{\zeta}}^{-1} &=& O(|| \sigma_{D} ||^{-2m/l}) = O(\lambda^{-1})
\end{eqnarray*}
as $|| \sigma_{F} || \to 0$ and $|| \sigma_{D} || \to 0$.\
Thus, the weak Harnack inequality implies that the lemma follows.\
\end{proof}

\begin{rem}
From the proof of \cite[Theorem 8.18]{GT}, we know that the optimal Harnack constant $C_{H}$ is estimated by
$$
C_{H} = C_{n}^{\sqrt{\Lambda/\lambda}},
$$
where $C_{n}$ depends only on $n$.\
\end{rem}

Set $U := (u_{i,\overline{j}})$.\
For $x,y \in B_{4r}$, we obtain
$$
a^{i,\overline{j}}(y)u_{i,\overline{j}}(x) = f(y) u^{i,\overline{j}}(y) u_{i,\overline{j}}(x) = f(y) {\rm tr}(U(y)^{-1}U(x)).
$$
In particular, $a^{i,\overline{j}}(y)u_{i,\overline{j}}(y) = n f(y)$.\
Since $\det (f(y)^{1/n}U(y)^{-1}) = 1$, we have
\begin{eqnarray*}
a^{i,\overline{j}}(y)u_{i,\overline{j}}(x)
&=& f(y)^{1 - 1/n} {\rm tr}(f(y)^{1/n} U(y)^{-1}U(x))\\
&\geq& n f(y)^{1 - 1/n}\det(U(x))^{1/n}\\
&=& n f(y)^{1 - 1/n}f(x)^{1/n}.
\end{eqnarray*}
Here, we have used the following lemma (see \cite[Lemma 5.8]{GZ}) :
\begin{lemma}
For any $A \in \mathcal{H}_{+}$, we have
$$
(\det A)^{1/n} = \frac{1}{n} \inf \{ {\rm tr}(AB) | B \in \mathcal{H}_{+}, \det B =1 \}.
$$
\end{lemma}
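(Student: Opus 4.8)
The plan is to establish the two inequalities separately: the bound $(\det A)^{1/n}\le \tfrac1n{\rm tr}(AB)$ for every admissible $B$ via the arithmetic--geometric mean inequality, and the reverse by exhibiting an explicit minimizer.

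First I would record the one linear-algebra point that needs care: for $A\in\mathcal{H}_{+}$ and any $B\in\mathcal{H}_{+}$ the matrix $AB$ need not be Hermitian, but it is similar to the positive definite Hermitian matrix $A^{1/2}BA^{1/2}$, where $A^{1/2}$ is the positive square root of $A$. Hence $AB$ has $n$ real positive eigenvalues $\mu_{1},\dots,\mu_{n}$, with ${\rm tr}(AB)=\sum_{k}\mu_{k}$ and $\det(AB)=\prod_{k}\mu_{k}$. Applying AM--GM to $\mu_{1},\dots,\mu_{n}$ gives
$$
\frac{1}{n}{\rm tr}(AB)\;\ge\;\big(\det(AB)\big)^{1/n}\;=\;(\det A)^{1/n}(\det B)^{1/n}.
$$
Restricting to $B$ with $\det B=1$ yields $\tfrac1n{\rm tr}(AB)\ge(\det A)^{1/n}$ for every admissible $B$, so the infimum on the right-hand side is at least $(\det A)^{1/n}$.

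For the reverse inequality I would simply substitute $B_{0}:=(\det A)^{1/n}A^{-1}$. Then $B_{0}\in\mathcal{H}_{+}$, since $A^{-1}\in\mathcal{H}_{+}$ and $(\det A)^{1/n}>0$, and $\det B_{0}=\big((\det A)^{1/n}\big)^{n}\det(A^{-1})=(\det A)(\det A)^{-1}=1$, so $B_{0}$ is admissible. Moreover ${\rm tr}(AB_{0})=(\det A)^{1/n}{\rm tr}(AA^{-1})=n(\det A)^{1/n}$, as $AA^{-1}$ is the $n\times n$ identity matrix, so $\tfrac1n{\rm tr}(AB_{0})=(\det A)^{1/n}$. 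Therefore the infimum is attained and equals $(\det A)^{1/n}$, which is the assertion; note that equality in AM--GM forces $\mu_{1}=\cdots=\mu_{n}$, consistent with $AB_{0}$ being a scalar multiple of the identity.

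There is no real obstacle in this statement; the only step requiring a line of justification is the passage from ``$AB$ is not Hermitian'' to ``$AB$ has positive real eigenvalues whose sum and product are ${\rm tr}(AB)$ and $\det(AB)$'', which the similarity with $A^{1/2}BA^{1/2}$ supplies. Accordingly I would keep the written proof to a few lines.
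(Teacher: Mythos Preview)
Your proof is correct and is the standard argument for this identity: AM--GM on the eigenvalues of $AB$ (justified via similarity with $A^{1/2}BA^{1/2}$) gives the lower bound, and $B_{0}=(\det A)^{1/n}A^{-1}$ realises it. The paper does not actually prove this lemma; it merely quotes it from \cite[Lemma~5.8]{GZ}, so there is no in-paper argument to compare against, but your write-up matches what one finds in that reference.
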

Therefore, for any $x,y \in B_{4r}$ and $\epsilon \in (0,1)$, we have
\begin{eqnarray*}
a^{i,\overline{j}}(y)(u_{i,\overline{j}}(y) - u_{i,\overline{j}}(x))
&\leq& n f(y) - n f(y)^{1 - 1/n}f(x)^{1/n}\\
&=& n f(y)^{1 - 1/n} (f(y)^{1/n} - f(x)^{1/n})\\
&\leq& C(\epsilon)_{4} |x-y|^{\epsilon},
\end{eqnarray*}
where
$$
C(\epsilon)_{4} := n \sup_{\Omega} (f^{1 - 1/n}) \mbox{ H$\ddot{{\rm o}}$l}_{\epsilon,\Omega}(f^{1/n})
$$
and $ \mbox{ H$\ddot{{\rm o}}$l}_{\epsilon,\Omega}$ denotes an $\epsilon$-{\hol} constant.\
In this case, the following estimates
\begin{eqnarray}
\mbox{ H$\ddot{{\rm o}}$l}_{\epsilon,\Omega}(f^{1/n})
&=& O(||\sigma_{F}||^{ -2/nl - \epsilon} ||\sigma_{D}||^{2m/nl - \epsilon})\\
\sup_{\Omega} (f^{1 - 1/n})
&=& O(||\sigma_{F}||^{ -2(n-1)/nl} ||\sigma_{D}||^{2m(n-1)/nl})
\end{eqnarray}
implies that we have
\begin{eqnarray}
\label{e holder}
C(\epsilon)_{4} = O(||\sigma_{F}||^{ -2/l - \epsilon} ||\sigma_{D}||^{2m/l - \epsilon}).
\end{eqnarray}

\begin{rem}
In \cite[p.375]{GZ}, they used the Lipscitz constant of $f$.\
But in our case, it is enough to use the {\hol} constant of $f$ for sufficiently small $\epsilon$.\
\end{rem}

Set $\lambda, \Lambda >0$ so that the eigenvalues of $(a^{i,\overline{j}}(y))$ lie in the interval $[\lambda, \Lambda]$.\
Then, Lemma \ref{orthogonal lemma} implies that we can find unit vectors $\zeta_{1},...,\zeta_{N} \in \mathbb{C}^{n}$ such that for any $x,y \in \Omega$,
$$
a^{i,\overline{j}}(y)(u_{i,\overline{j}}(y) - u_{i,\overline{j}}(x)) = \sum_{k=1}^{N} \beta_{k}(y) (u_{\zeta_{k},\overline{\zeta}_{k}}(y) - u_{\zeta_{k},\overline{\zeta}_{k}}(x)),
$$
where $\beta_{k}(y) \in [\lambda_{*},\Lambda_{*}]$ and $\lambda_{*},\Lambda_{*}>0$.\

Thus, we have
$$
\sum_{k=1}^{N} \beta_{k}(y) (u_{\zeta_{k},\overline{\zeta}_{k}}(y) - u_{\zeta_{k},\overline{\zeta}_{k}}(x)) \leq C(\epsilon)_{4} |x-y|^{\epsilon}.
$$

Set
$$
M_{k,r} := \sup_{B_{r}} u_{\zeta_{k},\overline{\zeta}_{k}} , \hspace{8pt} m_{k,r} := \inf_{B_{r}} u_{\zeta_{k},\overline{\zeta}_{k}},
$$
and
$$
\eta(r) := \sum_{k=1}^{N}(M_{k,r}-m_{k,r}).
$$
To establish the {\hol} condition
$$
\eta(r) \leq C r^{\tilde{\epsilon}}
$$
for some $0<\tilde{\epsilon}<1$, we need the following lemma from \cite[p.201, Lemma 8.23]{GT} :
\begin{lemma}
\label{small lemma}
Let $\eta$ and $\sigma$ be non-decreasing functions defined on the interval $(0,R_{0}]$ such that there exist $\tau, \alpha \in (0,1)$ satisfying
$$
\eta(\tau r) \leq \alpha \eta(r) + \sigma(r)
$$
for all $r \in (0,R_{0}]$.\
Then, for any $\mu \in (0,1)$, we have

$$
\eta(R) < \frac{1}{\alpha} \left( \frac{R}{R_{0}} \right)^{(1-\mu)(\log \alpha / \log \tau)} + \frac{\sigma(R_{0}^{1-\mu} R^{\mu})}{1 - \alpha}.
$$
\end{lemma}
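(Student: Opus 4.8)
The plan is to prove this by the by-now-standard dyadic iteration of the recursion, exactly as in \cite[p.~201, Lemma~8.23]{GT}; I indicate the steps.

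First I would iterate the hypothesis. Substituting $r\mapsto\tau r,\tau^{2}r,\dots$ and feeding each estimate into the next, one obtains, for every $0<r\le R_{0}$ and every integer $k\ge 1$,
$$
\eta(\tau^{k}r)\ \le\ \alpha^{k}\eta(r)+\sum_{j=0}^{k-1}\alpha^{j}\sigma(\tau^{k-1-j}r)\ \le\ \alpha^{k}\eta(r)+\frac{\sigma(r)}{1-\alpha},
$$
where the last inequality uses that $\sigma$ is non-decreasing, so $\sigma(\tau^{k-1-j}r)\le\sigma(r)$, together with $\sum_{j\ge 0}\alpha^{j}=(1-\alpha)^{-1}$.

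Next I would split the descent from scale $R_{0}$ down to scale $R$ into two blocks of iterations, one governing the decay term and one the $\sigma$-term. Fix $R\in(0,R_{0}]$ and $\mu\in(0,1)$, and set $x:=\log(R/R_{0})/\log\tau\ (\ge 0)$, the number of contractions needed to pass from $R_{0}$ to $R$. For $x$ bounded (that is, $R$ comparable to $R_{0}$) the asserted inequality is trivial once the constants are enlarged, so I may assume $x$ is large. Choose $k_{2}:=\lceil\mu x\rceil$, so that $\tau^{k_{2}}R_{0}\le R_{0}^{1-\mu}R^{\mu}$, and $k_{1}:=\lfloor x\rfloor-k_{2}$ (which is $\ge 1$ once $x$ is large), so that $\tau^{k_{1}+k_{2}}R_{0}\ge R$. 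Since $\eta$ is non-decreasing, $\eta(R)\le\eta\bigl(\tau^{k_{1}}(\tau^{k_{2}}R_{0})\bigr)$; applying the iterated bound with $r=\tau^{k_{2}}R_{0}\le R_{0}$ and $k=k_{1}$, and using monotonicity of $\eta$ and of $\sigma$ once more,
$$
\eta(R)\ \le\ \alpha^{k_{1}}\eta(R_{0})+\frac{\sigma(\tau^{k_{2}}R_{0})}{1-\alpha}\ \le\ \alpha^{k_{1}}\eta(R_{0})+\frac{\sigma\bigl(R_{0}^{1-\mu}R^{\mu}\bigr)}{1-\alpha}.
$$

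Finally I would unwind the exponent. The two roundings cost only an additive $O(1)$, so $k_{1}\ge(1-\mu)x-c$ for an absolute constant $c$, whence $\alpha^{k_{1}}\le\alpha^{-c}\alpha^{(1-\mu)x}=\alpha^{-c}(R/R_{0})^{(1-\mu)\log\alpha/\log\tau}$; absorbing $\alpha^{-c}$ and the factor $\eta(R_{0})$ into the constant — and using, in the situation of the present paper, the normalization $\eta(R_{0})\le 1$, which holds because the oscillations $M_{k,r}-m_{k,r}$ are controlled by the $C^{2}$-estimate of the previous subsection — one arrives at
$$
\eta(R)\ <\ \frac{1}{\alpha}\left(\frac{R}{R_{0}}\right)^{(1-\mu)\log\alpha/\log\tau}+\frac{\sigma\bigl(R_{0}^{1-\mu}R^{\mu}\bigr)}{1-\alpha},
$$
as claimed. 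I expect the only genuinely delicate point to be this last bit of bookkeeping: one must choose $k_{1},k_{2}$ so that simultaneously $\tau^{k_{1}+k_{2}}R_{0}\ge R$ (so that monotonicity of $\eta$ may be invoked) and $\tau^{k_{2}}R_{0}\le R_{0}^{1-\mu}R^{\mu}$ (so that $\sigma$ is evaluated at precisely the required radius), while keeping $k_{1}$ a fixed fraction of $x$. There is just enough room because $(1-\mu)x+\mu x=x$ and each rounding costs only $O(1)$ iterations, so the losses are absorbed harmlessly into the constants $\alpha^{-1}$ and $(1-\alpha)^{-1}$.
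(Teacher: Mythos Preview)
The paper does not prove this lemma; it simply quotes \cite[p.~201, Lemma~8.23]{GT}, and your argument is precisely that standard Gilbarg--Trudinger iteration, so the approaches coincide.

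One remark: you are right that the inequality as displayed needs a factor $\eta(R_{0})$ on the first term (without it the estimate is false --- take $\eta(r)=K r^{\log\alpha/\log\tau}$, $\sigma\equiv 0$, $R_{0}=1$, $K$ large), and your instinct to supply this from context is the correct fix. However, your specific justification that $\eta(R_{0})\le 1$ via the $C^{2}$-estimate is not accurate in the paper's setting: the oscillation of $u_{\zeta,\overline{\zeta}}$ over $B_{R_{0}}$ is only bounded by $O(\Lambda)$, not by $1$. This is an imprecision in the paper's formulation of the lemma rather than a flaw in your proof; in the subsequent application the missing factor is harmless, since the first term is absorbed as $O(1)$ relative to the second term carrying $(1-\alpha)^{-1}=O((\Lambda/\lambda)C_{H})$.
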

So, it suffices to show that
$$
\eta(r) \leq \delta \eta(4r) + C r^{\epsilon} , \hspace{6pt} 0 < r < r_{0},
$$
where $\delta, \epsilon \in (0,1)$ and $r_{0}>0$.\

For fixed $k$, Harnack inequality implies that
\begin{eqnarray*}
r^{-2n} \int_{B_{r}} \sum_{l \neq k} ( M_{l,4r} - u_{\zeta_{l},\overline{\zeta}_{l}} )
&=& \sum_{l \neq k} r^{-2n} \int_{B_{r}} ( M_{l,4r} - u_{\zeta_{l},\overline{\zeta}_{l}} )\\
&\leq& \sum_{l \neq k} C_{H} ( M_{l,4r} - M_{l,r})\\
&\leq& \sum_{l \neq k} C_{H} ( \eta(4r) - \eta(r) )\\
&=& (N-1) C_{H} ( \eta(4r) - \eta(r)).
\end{eqnarray*}

For $x \in B_{4r}$ and $y \in B_{r}$, we have
\begin{eqnarray*}
\beta_{k}(y)  (u_{\zeta_{k},\overline{\zeta}_{k}}(y) - u_{\zeta_{k},\overline{\zeta}_{k}}(x))
&\leq& C(\epsilon)_{4} |x-y|^{\epsilon} + \sum_{l \neq k} \beta_{l}(y)  (u_{\zeta_{l},\overline{\zeta}_{l}}(x) - u_{\zeta_{l},\overline{\zeta}_{l}}(y))\\
&\leq& 5 C(\epsilon)_{4} r^{\epsilon} + \Lambda_{*} \sum_{l \neq k}  (M_{l,4r} - u_{\zeta_{l},\overline{\zeta}_{l}}(y)).\\
\end{eqnarray*}

Thus, for all $y \in B_{r}$, we have
$$
u_{\zeta_{k},\overline{\zeta}_{k}}(y) - m_{k,4r} \leq \frac{1}{\lambda_{*}} \left(5 C(\epsilon)_{4} r^{\epsilon} + \Lambda_{*} \sum_{l \neq k}  (M_{l,4r} - u_{\zeta_{l},\overline{\zeta}_{l}}(y)) \right).
$$

Therefore,
\begin{eqnarray*}
r^{-2n} \int_{B_{r}} ( u_{\zeta_{k},\overline{\zeta}_{k}}(y) - m_{k,4r} )
&\leq& r^{-2n} \int_{B_{r}}  \frac{1}{\lambda_{*}} \left(5 C(\epsilon)_{4} r^{\epsilon} + \Lambda_{*} \sum_{l \neq k}  (M_{l,4r} - u_{\zeta_{l},\overline{\zeta}_{l}}(y)) \right)\\
&\leq& \frac{5C(\epsilon)_{4}}{\lambda_{*}}r^{\epsilon} + \frac{\Lambda_{*}}{\lambda_{*}} r^{-2n} \int_{B_{r}} \sum_{l \neq k} ( M_{l,4r} - u_{\zeta_{l},\overline{\zeta}_{l}} )\\
&\leq& \frac{5C(\epsilon)_{4}}{\lambda_{*}}r^{\epsilon} + \frac{\Lambda_{*}}{\lambda_{*}} (N-1) C_{H} ( \eta(4r) - \eta(r) ).
\end{eqnarray*}

Using Harnack inequality again, we have
\begin{eqnarray*}
M_{k,4r} - m_{k,4r}
&=& r^{-2n} \int_{B_{r}} (\sup_{B_{4r}} u_{\zeta_{k},\overline{\zeta}_{k}} - u_{\zeta_{k},\overline{\zeta}_{k}}) + r^{-2n} \int_{B_{r}} ( u_{\zeta_{k},\overline{\zeta}_{k}}(y) - m_{k,4r} )\\
&\leq& C_{H} (M_{k,4r} - M_{k,r} ) + \frac{5C(\epsilon)_{4}}{\lambda_{*}}r^{\epsilon} + \frac{\Lambda_{*}}{\lambda_{*}} (N-1) C_{H} ( \eta(4r) - \eta(r) )\\
&\leq& \left( C_{H} + \frac{\Lambda_{*}}{\lambda_{*}} (N-1) C_{H} \right) \eta(4r) \\
&\hspace{20pt}& - \left( C_{H} + \frac{\Lambda_{*}}{\lambda_{*}} (N-1) C_{H} \right) \eta(r) + \frac{5C(\epsilon)_{4}}{\lambda_{*}} r^{\epsilon}.
\end{eqnarray*}

Summing over $k$, we have
\begin{eqnarray*}
\eta(4r)
&\leq& N \left( C_{H} + \frac{\Lambda_{*}}{\lambda_{*}} (N-1) C_{H} \right) \eta(4r)\\
&\hspace{20pt}& - N \left( C_{H} + \frac{\Lambda_{*}}{\lambda_{*}} (N-1) C_{H} \right) \eta(r) + N \frac{5C(\epsilon)_{4}}{\lambda_{*}} r^{\epsilon}.
\end{eqnarray*}
Thus, we obtain
\begin{eqnarray}
\label{consequence}
\eta(r)
&\leq& \frac{N \left( C_{H} + \frac{\Lambda_{*}}{\lambda_{*}} (N-1) C_{H} \right) - 1}{N \left( C_{H} + \frac{\Lambda_{*}}{\lambda_{*}} (N-1) C_{H} \right)} \eta(4r) + \frac{ \frac{5C(\epsilon)_{4}}{\lambda_{*}} }{ C_{H} + \frac{\Lambda_{*}}{\lambda_{*}} (N-1) C_{H} }r^{\epsilon}.
\end{eqnarray}
Since we can take arbitrary $\lambda^{*}N < \lambda$ and $\Lambda^{*} > \Lambda$, we may assume that $\lambda^{*}N = \lambda$ and $\Lambda^{*} = \Lambda$.\
Thus, we have
\begin{lemma}
By taking $\epsilon \leq 2/l$, there exists $0 < \tilde{\epsilon} <\epsilon$ with
$$
|| u ||_{C^{2,\tilde{\epsilon}}} =  O\left( \left( \frac{\Lambda}{\lambda} \right) C_{H} \right).
$$
\end{lemma}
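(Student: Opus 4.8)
\emph{Proof strategy.} Inequality (\ref{consequence}) is exactly an estimate of the form $\eta(r)\le\delta\,\eta(4r)+A_{\epsilon}\,r^{\epsilon}$, so the plan is: first, read off the sizes of the decay factor $\delta$ and the inhomogeneous coefficient $A_{\epsilon}$ in terms of $\Lambda$, $\lambda$ and $C_{H}$; second, feed this into Lemma \ref{small lemma} to get a H\"older modulus $\eta(r)=O\big((\Lambda/\lambda)C_{H}\big)\,r^{\tilde\epsilon}$ for some $0<\tilde\epsilon<\epsilon$; third, translate this, via Lemma \ref{orthogonal lemma} and the $C^{2}$-estimate (\ref{our c2 estimate}), into the claimed bound for $|| u ||_{C^{2,\tilde\epsilon}}$.

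\emph{Step 1: the coefficients in (\ref{consequence}).} After the normalization $\lambda_{*}N=\lambda$, $\Lambda_{*}=\Lambda$ and using that $N=N(n)$ (Remark \ref{uniform N}), (\ref{consequence}) becomes $\eta(r)\le\delta\,\eta(4r)+A_{\epsilon}r^{\epsilon}$ with
$$\delta=1-\frac{1}{N\big(1+\tfrac{\Lambda_{*}}{\lambda_{*}}(N-1)\big)C_{H}}\in(0,1),\qquad A_{\epsilon}=\frac{5\,C(\epsilon)_{4}}{\lambda_{*}\big(1+\tfrac{\Lambda_{*}}{\lambda_{*}}(N-1)\big)C_{H}}.$$
Since $\lambda_{*}\big(1+\tfrac{\Lambda_{*}}{\lambda_{*}}(N-1)\big)=\lambda_{*}+\Lambda_{*}(N-1)\ge\Lambda$, one gets $\dfrac{1}{1-\delta}=N\big(1+\tfrac{\Lambda_{*}}{\lambda_{*}}(N-1)\big)C_{H}=O\big((\Lambda/\lambda)C_{H}\big)$ and, cancelling the common factors, $\dfrac{A_{\epsilon}}{1-\delta}=\dfrac{5N^{2}C(\epsilon)_{4}}{\lambda}=O\!\Big(\dfrac{C(\epsilon)_{4}}{\lambda}\Big)$, where $C(\epsilon)_{4}=O(|| \sigma_{F} ||^{-2/l-\epsilon}|| \sigma_{D} ||^{2m/l-\epsilon})$ by (\ref{e holder}).

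\emph{Step 2: applying Lemma \ref{small lemma}.} Invoke Lemma \ref{small lemma} with $\sigma(r)=A_{\epsilon}r^{\epsilon}$, $\alpha=\delta$ and $\tau=1/4$ (after replacing $r$ by $4r$), and fix any $\mu\in(0,1)$. With $\tilde\epsilon:=\min\{(1-\mu)\log\delta/\log(1/4),\ \mu\epsilon\}$ we have $0<\tilde\epsilon<\epsilon$ (positivity because $\delta\in(0,1)$, and $\tilde\epsilon\le\mu\epsilon<\epsilon$), and the lemma gives, for all small $R$,
$$\eta(R)\le\Big(O(1)+\frac{A_{\epsilon}}{1-\delta}\Big)R^{\tilde\epsilon}=O\!\Big(\frac{C(\epsilon)_{4}}{\lambda}\Big)R^{\tilde\epsilon}.$$
This is where $\epsilon\le 2/l$ is used: by (\ref{e holder}) together with $\Lambda=O(|| \sigma_{F} ||^{-2/l})$ and $\lambda^{-1}=O(|| \sigma_{D} ||^{-2m/l})$,
$$\frac{C(\epsilon)_{4}}{\lambda}=O\big(|| \sigma_{F} ||^{-2/l-\epsilon}|| \sigma_{D} ||^{-\epsilon}\big),$$
and as $|| \sigma_{F} ||,|| \sigma_{D} ||\to0$ this is $O\big((\Lambda/\lambda)C_{H}\big)$, since by the remark after Lemma \ref{Harnack constant} the Harnack constant grows like $C_{n}^{\sqrt{\Lambda/\lambda}}$ and therefore dominates every fixed negative power of $|| \sigma_{F} ||$ and $|| \sigma_{D} ||$. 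Hence $\eta(R)=O\big((\Lambda/\lambda)C_{H}\big)R^{\tilde\epsilon}$.

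\emph{Step 3: from $\eta$ to $|| u ||_{C^{2,\tilde\epsilon}}$.} By Lemma \ref{orthogonal lemma} the unit vectors $\zeta_{1},\dots,\zeta_{N}$ may be chosen to contain the standard basis of $\mathbb{C}^{n}$ together with the vectors $(e_{i}+e_{j})/\sqrt2$ and $(e_{i}+\sqrt{-1}\,e_{j})/\sqrt2$ for $i<j$; then every entry $u_{i,\overline{j}}$ is a fixed real-linear combination of the second derivatives $u_{\zeta_{k},\overline{\zeta}_{k}}$, so for each small ball $B_{R}$ we get $ {\rm osc}_{B_{R}}u_{i,\overline{j}}\le C\,\eta(R)=O\big((\Lambda/\lambda)C_{H}\big)R^{\tilde\epsilon}$. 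A standard covering argument over the relatively compact domain converts this into $[D^{2}u]_{C^{\tilde\epsilon}}=O\big((\Lambda/\lambda)C_{H}\big)$, and since (\ref{our c2 estimate}) (with interpolation for the first order term) gives $|| u ||_{C^{2}}=O(\Lambda)=O\big((\Lambda/\lambda)C_{H}\big)$, we conclude $|| u ||_{C^{2,\tilde\epsilon}}=O\big((\Lambda/\lambda)C_{H}\big)$.

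\emph{Main difficulty.} Steps 1 and 3 are routine bookkeeping; the crux is the absorption carried out in Step 2 --- verifying that the inhomogeneous term $A_{\epsilon}/(1-\delta)=O(C(\epsilon)_{4}/\lambda)$, whose size is governed by the H\"older constant of the singular right-hand side $f$, stays within $O\big((\Lambda/\lambda)C_{H}\big)$. This is exactly why one imposes $\epsilon\le 2/l$: it keeps the blow-up rate of $C(\epsilon)_{4}$ from (\ref{e holder}) below the exponentially large Harnack constant $C_{H}$.
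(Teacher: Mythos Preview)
Your overall plan --- apply Lemma \ref{small lemma} to (\ref{consequence}), then upgrade the oscillation bound on the pure second derivatives to a full $C^{2,\tilde\epsilon}$-estimate --- is exactly the paper's route. Step~1 and Step~3 are fine; Step~3 (polarization via Lemma \ref{orthogonal lemma}) is in fact more explicit than the paper's bare citation of \cite[Theorem 4.6]{GT}.

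The gap is in Step~2. Your justification that $C(\epsilon)_{4}/\lambda = O\big((\Lambda/\lambda)C_{H}\big)$ rests on the exponential estimate $C_{H}\approx C_{n}^{\sqrt{\Lambda/\lambda}}$ from the remark after Lemma \ref{Harnack constant}. This is problematic for three reasons: (i) that remark records only the general-theory \emph{upper} bound on the Harnack constant, so you are using it in the wrong direction; (ii) the $C_{H}$ that actually appears in the lemma's statement and in the subsequent Proposition \ref{c2 estimate} is the one from Lemma \ref{Harnack constant}, which satisfies the much smaller bound $C_{H}=O(\Lambda/\lambda)$; and (iii) if $C_{H}$ really dominated every negative power of $\|\sigma_{F}\|,\|\sigma_{D}\|$, the hypothesis $\epsilon\le 2/l$ would be superfluous --- yet you announce that ``this is where $\epsilon\le 2/l$ is used'' and then give an argument that does not use it.

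The paper avoids this by keeping the two factors in $\dfrac{\sigma}{1-\alpha}$ separate rather than cancelling them. From the explicit formula in (\ref{consequence}) one has $\dfrac{1}{1-\alpha}=N\big(1+(N-1)\Lambda_{*}/\lambda_{*}\big)C_{H}=O\big((\Lambda/\lambda)C_{H}\big)$ \emph{identically} --- no lower bound on $C_{H}$ is needed. The hypothesis $\epsilon\le 2/l$ is then used precisely to show $\sigma(r)/r^{\epsilon}=O(1)$: this condition aligns the $\|\sigma_{F}\|$-exponent $-2/l-\epsilon$ in $C(\epsilon)_{4}$ with the exponent coming from $\Lambda$ in the denominator, so the ratio stays bounded. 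Rewriting your absorption step along these lines fixes the argument.
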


\begin{proof}
In order to show this lemma, we apply Lemma \ref{small lemma} to the inequality (\ref{consequence}).\
Set
$$
\alpha :=  \frac{N \left( C_{H} + \frac{\Lambda}{\lambda} (N-1) C_{H} \right) - 1}{N \left( C_{H} + \frac{\Lambda}{\lambda} (N-1) C_{H} \right)},
$$
where this is the coefficient of $\eta(4r)$ in (\ref{consequence}).\
Then, we have the following estimates :
$$
\frac{1}{\alpha} = O(1), \hspace{5pt} \frac{1}{1-\alpha} = O((\Lambda/\lambda)C_{H}).
$$
Here, we have used the fact that the number $N$ depends only on the dimension $n$ (Remark \ref{uniform N}).\
Define a non-decreasing function $\sigma$ by
$$
\sigma(r) := \frac{ \frac{5C(\epsilon)_{4}}{\lambda} }{ C_{H} + \frac{\Lambda}{\lambda} (N-1) C_{H} }r^{\epsilon}.\
$$
Here, this is the second term in the right hand side of the inequality (\ref{consequence}).\
Recall the estimate (\ref{e holder})
$$
C(\epsilon)_{4} = O(||\sigma_{F}||^{ -2/l - \epsilon} ||\sigma_{D}||^{2m/l - \epsilon})
$$
and Lemma \ref{Harnack constant}.\
The assumption that $\epsilon \leq 2/l$ implies that we have the following
$$
\frac{ \frac{5C(\epsilon)_{4}}{\lambda} }{ C_{H} + \frac{\Lambda}{\lambda} (N-1) C_{H} } = O(1).
$$
Lemma \ref{small lemma} implies that we have
$$
\eta(r) < \frac{1}{\alpha} \left( \frac{r}{r_{0}} \right)^{(1-\mu)(\log \alpha / \log (1/4))} + \frac{\sigma(r_{0}^{1-\mu} r^{\mu})}{1 - \alpha},
$$
for any $\mu \in (0,1)$.\
Take $\mu \in (0,1)$ so that
$$
(1 - \mu )(\log \alpha / \log (1/4)) > \mu \epsilon.
$$
Thus, we have
$$
\eta(r) < O((\Lambda/\lambda)C_{H})\sigma(r_{0}^{1-\mu} r^{\mu})
$$
Set $\tilde{\epsilon} := \epsilon \mu < \epsilon$.\
From the interior {\hol} estimate for solutions of Poisson's equation \cite[Theorem 4.6]{GT}, we finish the proof.
\end{proof}

Recall the relation (\ref{u and phi}) between $u$ and $\varphi$.\
Lemma \ref{Harnack constant} implies
\begin{prop}
\label{c2 estimate}
For the domain $\Omega \Subset X \setminus (D \cup F)$, we have
$$
|| \varphi ||_{C^{2,\tilde{\epsilon}}(\Omega)} = O\left( \left( || \sigma_{D} ||^{-2m/l} || \sigma_{F} ||^{-2/l} \right)^{2} \right)
$$
as $\sigma_{D}, \sigma_{F} \to 0$.\
\end{prop}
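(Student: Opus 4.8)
The plan is to assemble the three estimates already developed in Section~3 --- the $C^{2}$-estimate (inequality~(\ref{our c2 estimate})), the estimate of the Harnack constant $C_{H}$ from Lemma~\ref{Harnack constant}, and the $C^{2,\tilde\epsilon}$-bound $\|u\|_{C^{2,\tilde\epsilon}} = O\big((\Lambda/\lambda)\,C_{H}\big)$ from the preceding lemma --- and then translate the statement about $u = a+\varphi$ back into a statement about $\varphi$. First I would recall that on the coordinate chart the function $a$ is smooth and plurisubharmonic, so $\|\varphi\|_{C^{2,\tilde\epsilon}(\Omega)}$ and $\|u\|_{C^{2,\tilde\epsilon}(\Omega)}$ differ only by a constant depending on the fixed chart; in particular their growth as $\sigma_{D},\sigma_{F}\to 0$ is the same. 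Hence it suffices to bound $\|u\|_{C^{2,\tilde\epsilon}(\Omega)}$.

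Next I would substitute the two eigenvalue bounds established just after Theorem~\ref{p c2 estimate}, namely $\Lambda = O(\|\sigma_{F}\|^{-2/l})$ and $\lambda^{-1} = O(\|\sigma_{D}\|^{-2m/l})$, into the product $\Lambda/\lambda$, obtaining
\[
\frac{\Lambda}{\lambda} = O\big( \|\sigma_{D}\|^{-2m/l}\,\|\sigma_{F}\|^{-2/l}\big).
\]
By Lemma~\ref{Harnack constant} we also have $C_{H} = O(\Lambda/\lambda) = O\big(\|\sigma_{D}\|^{-2m/l}\|\sigma_{F}\|^{-2/l}\big)$. Combining these with $\|u\|_{C^{2,\tilde\epsilon}} = O\big((\Lambda/\lambda)C_{H}\big)$ yields
\[
\|u\|_{C^{2,\tilde\epsilon}(\Omega)} = O\Big( \big( \|\sigma_{D}\|^{-2m/l}\|\sigma_{F}\|^{-2/l} \big)^{2} \Big),
\]
and transporting this through the relation $u = a+\varphi$ gives exactly the asserted bound on $\|\varphi\|_{C^{2,\tilde\epsilon}(\Omega)}$. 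This is essentially a bookkeeping step: every ingredient is in place, and the only care needed is to make sure the implied constants are uniform over $\Omega$ and depend only on $\theta_{X}$, $p$, the fixed chart data, and $n$ (so that $\tilde\epsilon$, $N$, $\lambda_{*}$, $\Lambda_{*}$ are all controlled independently of how close one is to $D\cup F$), which is guaranteed by Remark~\ref{uniform N} and the hypotheses of Theorem~\ref{p c2 estimate}.

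The main point requiring attention --- rather than a true obstacle --- is the interplay between the Hölder exponent and the estimate of $C(\epsilon)_{4}$: one must keep $\epsilon \le 2/l$ so that, as shown in the proof of the previous lemma, the quotient $\dfrac{5C(\epsilon)_{4}/\lambda}{C_{H} + (\Lambda/\lambda)(N-1)C_{H}}$ stays $O(1)$; this is what prevents the $\sigma$-term in Lemma~\ref{small lemma} from contributing extra negative powers of $\|\sigma_{D}\|,\|\sigma_{F}\|$ and is precisely why the final exponent is the clean square $\big(\|\sigma_{D}\|^{-2m/l}\|\sigma_{F}\|^{-2/l}\big)^{2}$ rather than something worse. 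I would state Proposition~\ref{c2 estimate} as an immediate consequence of the chain of estimates above, with $\tilde\epsilon$ the exponent produced in the $C^{2,\tilde\epsilon}$-lemma.
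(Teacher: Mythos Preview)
Your proposal is correct and follows exactly the route taken in the paper: the proposition is stated there as an immediate consequence of the preceding lemma $\|u\|_{C^{2,\tilde\epsilon}} = O((\Lambda/\lambda)C_{H})$, Lemma~\ref{Harnack constant} ($C_{H}=O(\Lambda/\lambda)$), and the relation $u=a+\varphi$. Your extra remark about needing $\epsilon\le 2/l$ to keep the $\sigma$-term under control is already absorbed into the hypothesis of that preceding lemma, so everything lines up.
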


\subsection{
The third and the forth order estimates}

In this subsection, we prove Theorem \ref{solution}.\
This subsection also follows from \cite[Chapter 14]{GZ}.\
To consider higher order estimates, we recall the Schauder estimate with respect to the elliptic linear operator defined by the {\kah} metric $\theta_{X} + \dol \varphi$.\
The complex Monge-Amp$\grave{{\rm e}}$re operator
$$
F(D^{2} u) = \det (u_{i,\overline{j}})
$$
is elliptic if the $2n \times 2n$ real symmetric matrix $A := (\partial F / \partial u_{p,q})$ is positive (we denote here by $u_{p,q}$ the element of the real Hessian $D^{2} u$).\
The matrix $A$ is determined by
$$
\frac{d}{dt} F (D^{2} u + t B)|_{t=0} = {\rm tr }(A^{t}B).
$$
From \cite{Bl} (see also \cite[Exercise 14.8]{GZ}), we have
\begin{lemma}
One has
$$
\lambda_{{\rm min}}( \partial F / \partial u_{p,q}) = \frac{ \det (u_{i,\overline{j}})}{ 4\lambda_{{\rm max}}(u_{i,\overline{j}})}, \hspace{6pt} \lambda_{{\rm max}}( \partial F / \partial u_{p,q}) = \frac{ \det (u_{i,\overline{j}})}{ 4\lambda_{{\rm min}}(u_{i,\overline{j}})},
$$
where $\lambda_{{\rm min}}( \partial F / \partial u_{p,q})$ and $\lambda_{{\rm max}}( \partial F / \partial u_{p,q})$ denote minimal and maximal eigenvalue of the matrix $( \partial F / \partial u_{p,q}))_{p.q}$ respectively.\
\end{lemma}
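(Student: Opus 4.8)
The plan is to reduce this to a pointwise computation in linear algebra, as the references to \cite{Bl} and \cite[Exercise 14.8]{GZ} suggest. The operator $F$ sends the real Hessian $D^{2}u$ to $\det(u_{i,\overline{j}})$, where the complex Hessian depends linearly on $D^{2}u$: writing $z^{k} = x^{k} + \sqrt{-1}\,y^{k}$ and $\partial_{z^{k}} = \tfrac{1}{2}(\partial_{x^{k}} - \sqrt{-1}\,\partial_{y^{k}})$, one has
$$
u_{i,\overline{j}} = \tfrac{1}{4}\bigl(u_{x^{i}x^{j}} + u_{y^{i}y^{j}}\bigr) + \tfrac{\sqrt{-1}}{4}\bigl(u_{x^{i}y^{j}} - u_{y^{i}x^{j}}\bigr).
$$
First I would record the first variation of $F$. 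For a real symmetric matrix $B$, let $\widetilde{B}$ denote the variation of the complex Hessian induced by $B$ through the formula above; then, by Jacobi's formula,
$$
\frac{d}{dt}F(D^{2}u + tB)\Big|_{t=0} = \det(u_{i,\overline{j}})\,{\rm tr}\bigl((u_{i,\overline{j}})^{-1}\widetilde{B}\bigr),
$$
and, by the defining relation $\tfrac{d}{dt}F(D^{2}u+tB)|_{t=0} = {\rm tr}(A^{t}B)$ with $A = (\partial F/\partial u_{p,q})$, this identifies $A$.

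Next I would diagonalize. A unitary change of the holomorphic coordinates is an orthogonal change of the underlying real coordinates, under which $\det(u_{i,\overline{j}})$, the eigenvalues of $(u_{i,\overline{j}})$, and the eigenvalues of $A$ are all unchanged; hence I may assume $u_{i,\overline{j}} = {\rm diag}(\mu_{1},\dots,\mu_{n})$ with $0 < \mu_{1} \le \dots \le \mu_{n}$. Then $\det(u_{i,\overline{j}})\,(u_{i,\overline{j}})^{-1}$ is the diagonal matrix with entries $\det(u_{i,\overline{j}})/\mu_{i}$, and all off-diagonal cofactors vanish. Substituting into the first-variation formula, the terms $\widetilde{B}_{i,\overline{j}}$ with $i \ne j$ drop out, while $\widetilde{B}_{i,\overline{i}} = \tfrac{1}{4}(B_{x^{i}x^{i}} + B_{y^{i}y^{i}})$ is real (its imaginary part vanishes by the symmetry of $B$), so
$$
\frac{d}{dt}F(D^{2}u + tB)\Big|_{t=0} = \sum_{i=1}^{n} \frac{\det(u_{i,\overline{j}})}{4\mu_{i}}\bigl(B_{x^{i}x^{i}} + B_{y^{i}y^{i}}\bigr).
$$
Comparing with ${\rm tr}(A^{t}B) = \sum_{p,q}A_{pq}B_{pq}$ shows that $A$ is diagonal, with the value $\det(u_{i,\overline{j}})/(4\mu_{i})$ in both the $x^{i}$- and the $y^{i}$-slot and $0$ elsewhere. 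Hence the spectrum of $A$ is $\{\det(u_{i,\overline{j}})/(4\mu_{i})\}_{i=1}^{n}$, each eigenvalue of multiplicity two, and reading off the extreme values gives the two asserted identities.

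I do not expect any genuine obstacle here: the computation is pointwise linear algebra with no analytic input. The only steps that require care are bookkeeping ones — keeping the factor $\tfrac{1}{4}$ coming from $\partial_{z^{k}} = \tfrac{1}{2}(\partial_{x^{k}} - \sqrt{-1}\,\partial_{y^{k}})$ straight, and checking that the symmetry of $B$ indeed cancels the imaginary and mixed-real contributions so that $A$ comes out diagonal. Alternatively, one may simply invoke \cite{Bl} (see also \cite[Exercise 14.8]{GZ}), where this identity for the linearization of the complex {\ma} operator is established.
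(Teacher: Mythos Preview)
Your proposal is correct. The paper does not actually give a proof of this lemma: it simply records the statement with a citation to \cite{Bl} and \cite[Exercise 14.8]{GZ}, so your argument is strictly more detailed than what appears there, and the diagonalization-plus-Jacobi computation you outline is exactly the standard route those references take.
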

Then, we can estimate the ellipticity in the real sense.\
We apply the standard elliptic theory to the equation
$$
F(D^{2} u) = f.
$$
For a fixed unit vector $\zeta$ and small $h>0$, we consider
$$
u^{h}(x) := \frac{u(x+h\zeta) - u(x)}{h}
$$
and
$$
a_{h}^{p,q}(x) := \int^{1}_{0} \frac{\partial F}{\partial u_{p,q}} (t D^{2} u(x + h \zeta) + (1-t) D^{2}u(x)) dt.
$$
Thus, we have
$$
a_{h}^{p,q}(x) u^{h}_{p,q}(x) = \frac{1}{h} \int_{0}^{1} \frac{d}{dt} F (t D^{2} u(x + h \zeta) + (1-t) D^{2}u(x)) dt = f^{h}(x).
$$
From the definition of $a_{h}^{p,q}$, we obtain
$$
|| a_{h}^{p,q} ||_{C^{0,\tilde{\epsilon}}} \leq C || u ||_{C^{2,\tilde{\epsilon}}}^{n-1} = O((\Lambda/\lambda)^{2(n-1)})
$$
for sufficiently small $h>0$.\

The Schauder estimate implies
\begin{prop}
\label{higher derivative}
There exists $C_{S} > 0$ such that
$$
|| u^{h} ||_{C^{2,\tilde{\epsilon}}} \leq C_{S} ( || f^{h} ||_{C^{0,\tilde{\epsilon}}} + || u^{h} ||_{C^{0}} )
$$
for any $h>0$.\
\end{prop}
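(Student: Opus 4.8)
The plan is to read $u^{h}$ as a solution of the linear, uniformly elliptic, nondivergence-form equation
$$
\mathcal{L} u^{h} := a_{h}^{p,q} u^{h}_{p,q} = f^{h}
$$
with no lower order terms, and then to invoke the classical interior Schauder estimate for such equations. First I would fix, inside the chart $\Omega \Subset \mathbb{C}^{n} \setminus (D \cup F)$, a pair of concentric balls $B' \Subset B \Subset \Omega$ and record the two structural facts already in hand. By the eigenvalue formula of \cite{Bl} quoted above, applied at each point of the segment $t D^{2} u(x+h\zeta) + (1-t) D^{2} u(x)$, the real symmetric matrix $(a_{h}^{p,q}(x))$ is uniformly elliptic with ellipticity ratio controlled by $\Lambda/\lambda$; and, directly from the definition of $a_{h}^{p,q}$ as the integral average of $\partial F/\partial u_{p,q}$ along that segment together with Proposition \ref{c2 estimate}, one has $\| a_{h}^{p,q} \|_{C^{0,\tilde{\epsilon}}(B)} \leq C \| u \|_{C^{2,\tilde{\epsilon}}(\Omega)}^{n-1}$. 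Since $\varphi$ is smooth away from $D \cup F$, the function $u^{h}$ is smooth on $\Omega$, so the left-hand side of the asserted inequality is finite and the statement is a genuine a priori estimate.

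Next I would apply the interior Schauder estimate for nondivergence-form elliptic operators (\cite[Theorem 6.2]{GT}; see also \cite[Chapter 14]{GZ}) to the identity $\mathcal{L} u^{h} = f^{h}$ on $B$, which yields
$$
\| u^{h} \|_{C^{2,\tilde{\epsilon}}(B')} \leq C_{S} \left( \| f^{h} \|_{C^{0,\tilde{\epsilon}}(B)} + \| u^{h} \|_{C^{0}(B)} \right),
$$
where $C_{S}$ depends only on $n$, $\tilde{\epsilon}$, the radii of $B'$ and $B$, the ellipticity constants of $\mathcal{L}$, and $\| a_{h}^{p,q} \|_{C^{0,\tilde{\epsilon}}(B)}$. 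Covering $\Omega$ by finitely many such chart-ball pairs then gives the stated estimate on $\Omega$, with $C_{S}$ absorbing the combinatorics of the cover.

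The point that actually needs care --- and the reason the statement is phrased ``for any $h > 0$'' --- is the uniformity of $C_{S}$ in $h$. For this I would observe that for $h$ small the segment $t \mapsto t D^{2} u(x+h\zeta) + (1-t) D^{2} u(x)$ stays in a fixed compact neighbourhood, in the space of real Hessians, of the image of $D^{2} u$ over $\Omega$; on that neighbourhood $F(D^{2}u) = \det(u_{i,\overline{j}})$ is smooth and strictly elliptic thanks to the $C^{2}$-estimate, so the ellipticity constants of $(a_{h}^{p,q})$ and the bound on $\| a_{h}^{p,q} \|_{C^{0,\tilde{\epsilon}}(B)}$ can be chosen independently of $h \in (0,h_{0})$. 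Because the Schauder constant depends on the coefficients only through these two quantities, $C_{S}$ may be taken uniform in $h$. This uniformity is exactly what is needed afterwards: letting $h \to 0$ upgrades $\varphi$ to $C^{3,\tilde{\epsilon}}$ on $\Omega$, and differentiating the equation once more and repeating the argument controls the fourth order derivatives appearing in Theorem \ref{solution}. I expect the main obstacle to be not the Schauder machinery itself but precisely this bookkeeping --- keeping the ellipticity constants, the H\"older norms of $a_{h}^{p,q}$, and the resulting $C_{S}$ all uniform in $h$ --- for which the eigenvalue pinching of \cite{Bl} and the $C^{2}$-estimate are the decisive inputs.
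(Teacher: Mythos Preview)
Your proposal is correct and matches the paper's approach: the paper simply prefaces the proposition with ``The Schauder estimate implies'' and afterwards points to \cite[Lemma 6.1 and Theorem 6.2]{GT} for the dependence of $C_{S}$, so your detailed unpacking---recognising $a_{h}^{p,q} u^{h}_{p,q} = f^{h}$ as a linear nondivergence equation with uniformly elliptic $C^{0,\tilde{\epsilon}}$ coefficients (via the eigenvalue formula and the already-established $C^{2,\tilde{\epsilon}}$ bound) and then applying interior Schauder---is precisely what the paper has in mind. Your attention to the uniformity of $C_{S}$ in $h$ is also on target and is exactly what the subsequent $h \to 0$ argument needs.
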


Therefore, we can obtain the estimate of derivatives of the solution $\varphi$ in the desired direction by taking a suitable vector $\zeta$ and $h \to 0$.\
The constant $C_{S}$ in Proposition \ref{higher derivative} also depends on the maximal ratio of the eigenvalues $\Lambda / \lambda$ and the dimension $n$.\
By examining the proof of \cite[Lemma 6.1 and Theorem 6.2]{GT}, there is a positive constant $s(n)$ depending only on the dimension $n$ such that
$$
C_{S} = O ((\Lambda/\lambda)^{s(n)}).
$$

As $h \to 0$, we have the following third order estimates of $\varphi$ :
\begin{prop}
For any multi-index $\alpha = (\alpha_{1},...,\alpha_{n})$ satisfying $\sum_{i} \alpha_{i} = 2$, we have\
$$
\left| \frac{\partial}{\partial z^{i} } \partial^{\alpha} \varphi \right| = O \left( C_{S} |w_{D}|^{-4 m/l} |w_{F}|^{-4 /l} \right),
$$
$$
\left| \frac{\partial}{\partial w_{F} } \partial^{\alpha} \varphi \right| = O \left( C_{S}|w_{D}|^{-4 m/l} |w_{F}|^{-1 -4 /l} \right),
$$
$$
\left| \frac{\partial}{\partial w_{D} } \partial^{\alpha} \varphi \right| = O \left( C_{S} |w_{D}|^{-1 -4 m/l} |w_{F}|^{-4 /l} \right),
$$
as $|w_{D}|,|w_{F}| \to 0$.\
\end{prop}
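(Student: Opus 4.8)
The plan is to differentiate the linearized complex Monge--Amp\`ere equation once more, feed the difference quotients of $u = a + \varphi$ into the Schauder estimate of Proposition \ref{higher derivative}, let the increment tend to zero, and then translate the resulting bound from $u$ back to $\varphi$; since $a$ is smooth on the relatively compact chart, its third derivatives are bounded and may be discarded. Concretely, I would fix a real coordinate direction $\zeta$ and work, for small $h\neq 0$, with $u^{h}(x)=(u(x+h\zeta)-u(x))/h$, which solves the uniformly elliptic equation $a_{h}^{p,q}u^{h}_{p,q}=f^{h}$ on a slightly shrunk chart. Proposition \ref{higher derivative} then supplies, with $C_{S}=O((\Lambda/\lambda)^{s(n)})$ independent of $h$,
$$
\| u^{h} \|_{C^{2,\tilde{\epsilon}}} \leq C_{S}\bigl( \| f^{h} \|_{C^{0,\tilde{\epsilon}}} + \| u^{h} \|_{C^{0}} \bigr).
$$
A uniform bound on the right controls $\|D^{2}u^{h}\|_{C^{0}}$; letting $h\to 0$ along a $C^{2}_{\mathrm{loc}}$-convergent subsequence and identifying the limit with $\partial_{\zeta}u$ (legitimate since $u\in C^{2,\tilde{\epsilon}}$ by Proposition \ref{c2 estimate}) transfers this bound to $\|D^{2}\partial_{\zeta}u\|_{C^{0}}$. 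Taking $\zeta=\partial/\partial x^{i}$ and $\partial/\partial y^{i}$ (with $z^{i}=x^{i}+\sqrt{-1}y^{i}$) and forming $\partial/\partial z^{i}=\tfrac12(\partial/\partial x^{i}-\sqrt{-1}\partial/\partial y^{i})$, and likewise for $w_{F}$ and $w_{D}$, then yields the three asserted estimates for $\partial^{\alpha}\varphi$ differentiated once more along $z^{i}$, $w_{F}$, $w_{D}$.

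The two terms on the right I would estimate as follows. By Proposition \ref{c2 estimate}, rewritten via $u=a+\varphi$, one has $\|u\|_{C^{2,\tilde{\epsilon}}}=O((\Lambda/\lambda)^{2})=O(|w_{D}|^{-4m/l}|w_{F}|^{-4/l})$ on the chart (using $\|\sigma_{D}\|\asymp|w_{D}|$ and $\|\sigma_{F}\|\asymp|w_{F}|$), hence $\|u^{h}\|_{C^{0}}\leq\|Du\|_{C^{0}}=O(|w_{D}|^{-4m/l}|w_{F}|^{-4/l})$. For $\|f^{h}\|_{C^{0,\tilde{\epsilon}}}$, recall $f=|w_{F}|^{-2/l}|w_{D}|^{2m/l}$ up to a smooth positive factor: a derivative along a tangential $z^{i}$ only hits the smooth factor and gives $O(|w_{F}|^{-2/l}|w_{D}|^{2m/l})$, a derivative along $w_{F}$ brings down an extra $|w_{F}|^{-1}$, and a derivative along $w_{D}$ an extra $|w_{D}|^{-1}$; passing to the local $\tilde{\epsilon}$-H\"older seminorm on a ball of radius comparable to its distance to $D\cup F$ costs a further $|w_{F}|^{-\tilde{\epsilon}}|w_{D}|^{-\tilde{\epsilon}}$. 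Since $\tilde{\epsilon}<\epsilon\leq 2/l$ and $m\geq 1$, the inequalities $-2/l-1-\tilde{\epsilon}\geq -1-4/l$, $2m/l-1-\tilde{\epsilon}\geq -1-4m/l$ and $-2/l-\tilde{\epsilon}\geq -4/l$ hold, so in the three cases $\|f^{h}\|_{C^{0,\tilde{\epsilon}}}$ is $O(|w_{D}|^{-4m/l}|w_{F}|^{-4/l})$, $O(|w_{D}|^{-4m/l}|w_{F}|^{-1-4/l})$ and $O(|w_{D}|^{-1-4m/l}|w_{F}|^{-4/l})$ respectively, in each case no larger than the corresponding bound for $\|u^{h}\|_{C^{0}}$. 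Substituting these and $C_{S}$ into the Schauder inequality and invoking the limiting step above gives the result.

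The main obstacle I anticipate is the exponent bookkeeping for the H\"older seminorm of $f^{h}$: each claimed bound allows only a single extra $|w_{F}|^{-1}$ (respectively $|w_{D}|^{-1}$) beyond $(\Lambda/\lambda)^{2}$, so the additional $|w_{F}|^{-\tilde{\epsilon}}$ coming from the H\"older modulus must be absorbed, which works precisely because of the restriction $\epsilon\leq 2/l$ imposed in the previous subsection. A secondary point requiring care is the passage $h\to 0$ --- running Schauder with $h$-independent constants, extracting a convergent subsequence, and identifying its limit with $\partial_{\zeta}u$ via the $C^{2,\tilde{\epsilon}}$ regularity already in hand. The remaining ingredients --- the comparisons $\|\sigma_{F}\|\asymp|w_{F}|$ and $\|\sigma_{D}\|\asymp|w_{D}|$ on the chart, the boundedness of $a$ and its derivatives there, and the elementary bound $[\,|w|^{-\beta}\,]_{C^{0,\tilde{\epsilon}}}=O(|w|^{-\beta-\tilde{\epsilon}})$ on balls of radius comparable to their distance from the origin --- are routine.
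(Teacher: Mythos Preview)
Your proposal is correct and follows essentially the same approach as the paper: apply the Schauder estimate of Proposition \ref{higher derivative} to the difference quotients $u^{h}$ solving $a_{h}^{p,q}u^{h}_{p,q}=f^{h}$, then let $h\to 0$ along the chosen coordinate directions. The paper itself is extremely terse here --- it simply writes ``As $h\to 0$, we have the following third order estimates'' and states the proposition --- so the exponent bookkeeping you carry out for $\|f^{h}\|_{C^{0,\tilde{\epsilon}}}$ and $\|u^{h}\|_{C^{0}}$, and your care with the limiting step, supply details the paper leaves to the reader.
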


From the discussion above, we can prove Theorem \ref{solution}.\

\medskip
{\it Proof of Theorem \ref{solution}}
Let $\dot{a}_{h}^{p,q}$ be a differential of $a_{h}^{p,q}$ in some direction.\
From the definition of $a_{h}^{p,q}$, we know that
$$
|| \dot{a}_{h}^{p,q} ||_{C^{0,\tilde{\epsilon}}} \leq C || \dot{u} ||_{C^{2,\tilde{\epsilon}}} || u ||_{C^{2,\tilde{\epsilon}}}^{n-2}.
$$

Thus, by differentiating the equation $a_{h}^{p,q}(x) u^{h}_{p,q}(x) = f^{h}(x)$, Schauder estimate implies again the following inequality:
\begin{eqnarray*}
|| \dot{u}^{h} ||_{C^{2,\tilde{\epsilon}}}
&\leq& C_{S} ( || \dot{f}^{h} - \dot{a}_{h}^{p,q} u^{h}_{p,q} ||_{C^{0,\tilde{\epsilon}}} + || \dot{u}^{h} ||_{C^{0}} ).
\end{eqnarray*}
Thus, we finish the proof pf Theorem \ref{solution} by taking a suitable vector $\zeta$ and $h \to 0$.\
\sq

\begin{rem}
By examining the proof of \cite[Lemma 6.1 and Theorem 6.2]{GT} and the discussion above, we can find that
$$
a = a(n) = O(n^{2}).
$$
\end{rem}

\section{
Proof of Theorem \ref{small scalar curvature}}
\label{sec:9}

In this section, we prove Theorem \ref{small scalar curvature}.\
To compute the scalar curvature of the {\kah} metric $\omega_{c,v,\eta}$, we have to consider the inverse matrix (see Lemma 3.4 in \cite{Aoi1}).\
Since we assume that the divisor $D+F$ is simple normal crossing, we can choose block matrices in suitable directions in local holomorphic coordinates defining hypersurfaces $D$ and $F$.\
To prove Theorem \ref{small scalar curvature}, we consider the case that the parameter $\eta = (\eta_{1},\eta_{2},\eta_{3})$ depends on $c>0$.\
More precisely, we set $\eta_{i} := a_{i} c$ for $i=1,2$ for $a_{i} \in (0,1)$ and $\eta_{3}$ a fixed positive real number.\
We use many parameters, i.e., $c,v,\beta, \kappa,\eta,a_{i}$.\
When we want to make the scalar curvature $S(\omega_{c,v,\eta})$ small, we take sufficiently large $c$ and sufficiently small $v$.\
On the other hand, we don't make other parameters $\beta, \kappa,a_{i}$ close to $\infty$, 0 or 1.\
Namely, the parameters $\beta, \kappa,a_{i}$ are bounded in this sense.\
Settings of these bounded parameters will be given later.\

\medskip
{\it Proof of Theorem \ref{small scalar curvature}.}
Take a relatively compact domain $Y \Subset X \setminus (D \cup F)$.\
Recall that the function $G_{v}^{\beta}(\beta b)$ is defined by
$$
G_{v}^{\beta}(\beta b) := \int_{b_{0}}^{\beta b} \left( \frac{1}{e^{-y} + v} \right)^{1/\beta} dy.
$$
Immediately, we have $G_{v}^{\beta}(\beta b) < \beta e^{b}$ and $G_{v}^{\beta}(\beta b) \to \beta e^{b}$ as $v \to 0$.\
So, we can find a sufficiently large number $c_{0} = c_{0}(Y) > 0$ so that
$$
Y \Subset \left\{ t + \varphi + c_{0} > \max \{ \Theta(t), \tilde{G}_{v}^{\beta}(b) \} \right\} \Subset X \setminus (D \cup F)
$$
for any $v>0$.\
Here, $\tilde{G}_{v}^{\beta}(b) = G_{v}^{\beta}(\beta b) + \kappa \Theta(t)$.\
For simplicity, we write $\varphi + c_{0}$ by the same symbol $\varphi$.\

Recall that the property d) of the regularized maximum in Lemma \ref{gluing lemma}.\
If the following inequality
$$
\max_{j \neq k} \{ t_{j} + \eta_{j} \} < t_{k} - \eta_{k}
$$
holds for some $k$, we have $M_{\eta}(t) = t_k$.\
For instance, in our case, if we consider the region defined by the following inequality
$$
\max \{ \tilde{G}_{v}^{\beta}(\beta b) + \eta_{2}, t + \varphi + c + \eta_{3} \} < \Theta(t) - \eta_{1},
$$
we have $M_{c,v,\eta} = \Theta(t)$.\
Note that this region is contained in a sufficiently small neighborhood of $D$.\
In this case, we don't have to estimate the scalar curvature $S(\omega_{c,v,\eta})$ since $S(\omega_{c,v,\eta}) = S(\omega_0)$ on this region and the estimate of $S(\omega_0)$ have been obtained in Lemma \ref{estimate zero} before.\
Similarly, if the value of $M_{c,v,\eta}$ corresponds to one of the other variables $\tilde{G}_{v}^{\beta}(b), t+\varphi+c$, Lemma \ref{estimate g} and the Ricci-flatness of the {\kah} metric $\dol (t + \varphi)$ implies that $S(\omega_{c,v,\eta})$ is under control on such regions.\
Thus, it suffices for us to study the $S(\omega_{c,v,\eta})$ on the other regions defined by the inequalities
\begin{eqnarray*}
t_k + \eta_{k} &<& \max_{j \neq k} \{ t_{j} - \eta_{j} \},\\
| t_i - t_j | &<& \eta_{i} + \eta_{j},
\end{eqnarray*}
for $i,j \neq k$ and
\begin{eqnarray*}
| t_1 -  t_2 | &<& \eta_{1} + \eta_{2},\\
| t_2 -  t_3 | &<& \eta_{2} + \eta_{3},\\
| t_1 -  t_3 | &<& \eta_{1} + \eta_{3}.
\end{eqnarray*}
So we have to study $S(\omega_{c,v,\eta})$ on four regions defined by the inequalities above.\

Directly, we have
\begin{eqnarray*}
\omega_{c,v,\eta}
&=& \sqrt{-1} g_{i,\overline{j}} d z^{i} \wedge d \overline{z}^{j}\\
&=& \frac{\partial M_{c,v,\eta}}{\partial t_{1}} \omega_{0} + \frac{\partial M_{c,v,\eta}}{\partial t_{2}} ( \gamma_{v}^{\beta} + \kappa \omega_{0} ) + \frac{\partial M_{c,v,\eta}}{\partial t_{3}} \dol (t + \varphi)\\
&\hspace{15pt}+& \left[ 
\begin{array}{ccc}
\partial \Theta(t) & \partial \tilde{G}_{v}^{\beta}(b) &  \partial (t + \varphi)
\end{array} 
\right]
\left[ 
\begin{array}{c}
\frac{\partial^{2} M_{c,v,\eta}}{\partial t_{i} \partial t_{j}}
\end{array} 
\right]
\left[ 
\begin{array}{ccc}
\overline{\partial} \Theta(t) & \overline{\partial} \tilde{G}_{v}^{\beta}(b) & \overline{\partial} (t + \varphi)
\end{array} 
\right]^{t}.
\end{eqnarray*}
It follows from the convexity of $M_{\eta}$ that the last term is semi-positive.\
When we compute the scalar curvature of $\omega_{c,v,\eta}$, the difficulty comes from terms $\partial \Theta(t) \wedge \overline{\partial} \Theta(t)$ and $\partial \tilde{G}_{v}^{\beta}(b) \wedge \overline{\partial} \tilde{G}_{v}^{\beta}(b) $.\
For these terms, since functions $t$ and $b$ are defined by Hermitian norms of holomorphic sections, it suffices to focus on derivatives in normal directions of smooth hypersurfaces $D$ and $F$ by taking suitable local trivializations of line bundles $L_{X}$ and $K_{X}^{-l} \otimes L_{X}^{m}$ respectively.\
The reason why scalar curvatures of two {\kah} metrics $\omega_{0}, \gamma_{v}^{\beta}$ are under control near these hypersurfaces $D,F$ is that Ricci curvatures are bounded and {\kah} metrics grow asymptotically near these hypersurfaces.\
Thus, it suffices for us to focus on derivatives of $\varphi$ and $M_{\eta}$ arising in Ricci tensors.\
The higher order derivatives of $\varphi$ are estimated in the previous section (Theorem \ref{solution}).\
In addition, the definition of a parameter $\eta = (\eta_{i}) =(a_{1}c, a_{2}c, \eta_{3})$ and Lemma \ref{M derivatives} imply that the higher order derivatives in the first or the second variable of $M_{\eta}$ are estimated by some negative power of $c>0$.\
To estimate $S(\omega_{c,v,\eta})$ on each region, we divide the proof of Theorem \ref{small scalar curvature} into the following four claims.\

\begin{claim}
On the region defined by
\begin{eqnarray*}
( t + \varphi +c) + \eta_{3} &<& \max \{\Theta(t) - \eta_{1}, \tilde{G}_{v}^{\beta}(b) - \eta_{2} \},\\
| \Theta(t) -  \tilde{G}_{v}^{\beta}(b)  | &<& \eta_{1} + \eta_{2},
\end{eqnarray*}
we can make the scalar curvature $S(\omega_{c,v})$ small arbitrarily by taking a sufficiently large $c$.\
\end{claim}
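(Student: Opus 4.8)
The plan is to exploit that on this region the regularized maximum forgets its third slot, so that $\varphi$ disappears entirely and $\omega_{c,v,\eta}$ is built only from the two potentials $\Theta(t)$ and $\tilde G_v^\beta(b)$, whose metrics $\omega_0$ and $\dol\tilde G_v^\beta(b)=\gamma_v^\beta+\kappa\omega_0$ already have scalar curvature under control by Lemmas \ref{estimate zero} and \ref{estimate g}. Indeed, the first defining inequality of the region is precisely the hypothesis of property c) of Lemma \ref{gluing lemma} with $j=3$, so there $M_{c,v,\eta}=M_{(\eta_1,\eta_2)}(\Theta(t),\tilde G_v^\beta(b))$, which involves neither $\varphi$ nor $t_3$. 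Writing $M_i:=\partial M_{(\eta_1,\eta_2)}/\partial t_i$, $u_1:=\Theta(t)$ and $u_2:=\tilde G_v^\beta(b)$, the explicit formula for $\omega_{c,v,\eta}$ (with its third row and column deleted) reads
\[
\omega_{c,v,\eta}=(M_1+\kappa M_2)\,\omega_0+M_2\,\gamma_v^\beta+R,\qquad R=\sum_{1\le i,j\le 2}\frac{\partial^2M_{(\eta_1,\eta_2)}}{\partial t_i\partial t_j}\,\partial u_i\wedge\overline{\partial}u_j ,
\]
where I used $M_1+M_2=1$ (property d)). Property d) also forces the vector $(1,1)$ into the kernel of the Hessian of $M_{(\eta_1,\eta_2)}$, so that Hessian has rank $\le1$ and $R=m\,\partial(u_1-u_2)\wedge\overline{\partial}(u_1-u_2)$ for a nonnegative function $m=O(c^{-1})$ by Lemma \ref{M derivatives} (recall $\eta_i=a_ic$); here $u_1-u_2=(1-\kappa)\Theta(t)-G_v^\beta(\beta b)$, and $\dol(u_1-u_2)=\gamma_v^\beta-(1-\kappa)\omega_0$. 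In particular $\omega_{c,v,\eta}\ge\kappa\,\omega_0$.

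Next I would localize the region. Since $\varphi$ is bounded below (Ko\l odziej) and $\eta_i=a_ic$, the two defining inequalities force $\Theta(t)\gtrsim c$ — hence this region lies inside an arbitrarily small neighborhood of $D$ once $c$ is large, for any admissible choice of the bounded parameters (e.g.\ $a_1<\tfrac{1}{2}$): if $\Theta(t)-\eta_1>t+\varphi+c+\eta_3$ then $\Theta(t)\gtrsim c$ directly, and otherwise $\tilde G_v^\beta(b)-\eta_2>t+\varphi+c+\eta_3$, so $\Theta(t)>\tilde G_v^\beta(b)-(\eta_1+\eta_2)\gtrsim c$ as well. On such a neighborhood $\omega_0$ has asymptotically conical geometry: $\lambda_{\min}(\omega_0)\sim\|\sigma_D\|^{-2\hat S_D/n(n-1)}\gtrsim c$ in every direction, its Ricci form is bounded, and $S(\omega_0)=O(\|\sigma_D\|^{2\hat S_D/n(n-1)})=O(c^{-1})$ (Lemma \ref{estimate zero}); likewise $S(\dol\tilde G_v^\beta(b))=O(\|\sigma_D\|^{2\hat S_D/n(n-1)})$ there by Lemma \ref{estimate g}. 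Consequently, for any form $\Psi$ with $0\le\Psi\le C\theta_X$ one has ${\rm tr}_{\omega_{c,v,\eta}}\Psi\le nC/(\kappa\lambda_{\min}(\omega_0))=O(c^{-1})$, while ${\rm tr}_{\omega_{c,v,\eta}}R\le{\rm tr}_{\omega_{c,v,\eta}}\omega_{c,v,\eta}=n$ and ${\rm tr}_{\omega_{c,v,\eta}}\gamma_v^\beta=O(c^{-1})$; these are the two estimates I will trace against.

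Then I would compute $S(\omega_{c,v,\eta})={\rm tr}_{\omega_{c,v,\eta}}{\rm Ric}(\omega_{c,v,\eta})$ following Lemma 3.4 of \cite{Aoi1}. Since $R$ has rank one, $\omega_{c,v,\eta}^n=(1+{\rm tr}_{\omega_A}R)\,\omega_A^n$ with $\omega_A=(M_1+\kappa M_2)\omega_0+M_2\gamma_v^\beta$, so
\[
{\rm Ric}(\omega_{c,v,\eta})={\rm Ric}(\omega_0)-\dol\log\frac{\omega_A^n}{\omega_0^n}-\dol\log\bigl(1+{\rm tr}_{\omega_A}R\bigr).
\]
The first term traces to $O(c^{-1})$ since ${\rm Ric}(\omega_0)=O(\theta_X)$. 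In $\dol\log(\omega_A^n/\omega_0^n)$ one has $\omega_A^n/\omega_0^n=(M_1+\kappa M_2)^n(1+O(c^{-1}))$ near $D$, and every term produced by the chain rule is a product of (a) a derivative of the regularized maximum in its first or second variable, hence $O(c^{-1})$, $O(c^{-2})$ or $O(c^{-3})$ by Lemma \ref{M derivatives}; and (b) one of the forms $\dol(u_1-u_2)=\gamma_v^\beta-(1-\kappa)\omega_0$ or $\partial(u_1-u_2)\wedge\overline{\partial}(u_1-u_2)=R/m$ — each $\le C\omega_{c,v,\eta}$ up to bounded factors. In $\dol\log(1+{\rm tr}_{\omega_A}R)$, writing ${\rm tr}_{\omega_A}R=m\,\|\partial(u_1-u_2)\|_{\omega_A}^2$, the factor $\dol\log m$ is of the same type, and $\dol\log\|\partial(u_1-u_2)\|_{\omega_A}^2$ is bounded: reading $t$ and $b$ in local trivializations of $L_X$ and $K_X^{-l}\otimes L_X^m$, its genuinely growing parts are the pluriharmonic $\log\|\sigma_D\|^{-2}$, $\log\|\sigma_F\|^{-2}$ and $(c,v)$-dependent constants, all annihilated by $\dol$, leaving $-\tfrac{\hat S_D}{n(n-1)}\theta_X$ plus a bounded remainder. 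Tracing every piece against $\omega_{c,v,\eta}$ and using the estimates of the previous paragraph gives $S(\omega_{c,v,\eta})=O(c^{-1})$ on this region, which can be made arbitrarily small for $c$ large.

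The main obstacle is exactly the last step: near $D\cap F$ the potential $\Theta(t)$ blows up, so $\partial\Theta(t)\wedge\overline{\partial}\Theta(t)$ and ${\rm tr}_{\omega_A}R$ blow up, and a naive bound for $\dol\log(\omega_{c,v,\eta}^n/\omega_0^n)$ is only $O(c^{-1}\Theta(t))$, which is not small. What rescues it is the rank-one structure of $R$ — equivalently, the vanishing of the Hessian determinant of the regularized maximum, a consequence of property d) of Lemma \ref{gluing lemma} — which prevents an unmatched factor $\Theta(t)^2$ in the determinant; after that, the residual growth of ${\rm Ric}(\omega_{c,v,\eta})$ is confined to precisely the directions in which $\omega_{c,v,\eta}$ is itself enlarged by $R$, and verifying this cancellation cleanly, through the block decomposition adapted to the normal directions of $D$ and $F$ together with the asymptotically conical geometry of $\omega_0$, is the technical heart of the claim.
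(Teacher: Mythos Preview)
Your framework differs from the paper's and contains correct, useful structural observations the paper does not make explicit: the rank-one Hessian of $M_{(\eta_1,\eta_2)}$ (hence $R=m\,\partial(u_1-u_2)\wedge\overline\partial(u_1-u_2)$ with $m=O(c^{-1})$), the bound $\omega_{c,v,\eta}\ge\kappa\,\omega_0$, and the localization $\Theta(t)\gtrsim c$ forcing the region near $D$ (in fact near $D\cap F$, since $|(1-\kappa)\Theta(t)-G_v^\beta(\beta b)|<(a_1+a_2)c$ then forces $b$ large as well). The paper instead proceeds by direct computation: it writes the matrix of $\omega_{c,v,\eta}$ in coordinates $(z^1,\dots,z^{n-2},w_F,w_D)$ centred at a point of $D\cap F$ using Lemma~\ref{quadratic terms}, inverts it block-wise via Lemma~3.4 of \cite{Aoi1}, writes out the Ricci tensor componentwise, and reads off $S(\omega_{c,v,\eta})=O(c^{-2})$.

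However, your treatment of the critical term ${\rm tr}_{\omega_{c,v,\eta}}\dol\log\bigl(1+{\rm tr}_{\omega_A}R\bigr)$ is not complete. You split ${\rm tr}_{\omega_A}R=m\,\|\partial(u_1-u_2)\|_{\omega_A}^2$ and treat $\dol\log m$ and $\dol\log\|\partial(u_1-u_2)\|_{\omega_A}^2$ separately, but that decomposition is only valid where ${\rm tr}_{\omega_A}R\gg1$; where it is $O(1)$ (and $m$ may vanish) one must control $\dol\bigl({\rm tr}_{\omega_A}R\bigr)$ itself, which you do not. More seriously, $\omega_A$ depends on $M_1,M_2$ and hence on $u_1-u_2$, so the inverse metric appearing in $\|\cdot\|_{\omega_A}^2$ is not built from pluriharmonic data alone; your heuristic ``the growing parts are pluriharmonic, hence annihilated by $\dol$'' therefore does not follow from merely reading $t,b$ in local trivializations. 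You concede this yourself in the final paragraph, deferring the verification to ``the block decomposition adapted to the normal directions of $D$ and $F$'' --- but that block computation \emph{is} the paper's proof. So your proposal correctly isolates the mechanism and packages it elegantly, yet stops precisely at the step the paper actually carries out. (Minor: $\dol(u_1-u_2)=(1-\kappa)\omega_0-\gamma_v^\beta$, the opposite sign to what you wrote.)
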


\begin{proof}
On this region, we can write as
\begin{eqnarray*}
\omega_{c,v,\eta}
&=& \frac{\partial M_{c,v,\eta}}{\partial t_{1}} \omega_{0} + \frac{\partial M_{c,v,\eta}}{\partial t_{2}} ( \gamma_{v}^{\beta} + \kappa \omega_{0} )\\
&\hspace{15pt}+& \left[ 
\begin{array}{cc}
\partial \Theta(t) & \partial \tilde{G}_{v}^{\beta}(b)
\end{array} 
\right]
\left[ 
\begin{array}{c}
\frac{\partial^{2} M_{c,v,\eta}}{\partial t_{i} \partial t_{j}}
\end{array} 
\right]
\left[ 
\begin{array}{cc}
\overline{\partial} \Theta(t) & \overline{\partial} \tilde{G}_{v}^{\beta}(b)
\end{array} 
\right]^{t}.
\end{eqnarray*}
To prove this claim, we need the following lemma.\
\begin{lemma}
\label{quadratic terms}
Take a point $p \in D \cap F$ and local holomorphic coordinates $(z^{1},...,z^{n-2},w_{F},w_{D})$ centered at $p$ satisfying $D = \{ w_{D} = 0 \}$ and $F = \{ w_{F} = 0 \}$.\
By taking suitable local trivializations of $L_{X}$ and $K_{X}^{-l} \otimes L_{X}^{m}$, we may assume that if $(z^{1},...,z^{n-2},w_{F},w_{D}) = (0,...,0,w_{F},w_{D})$, we have
\begin{eqnarray*}
\partial \Theta(t) \wedge \overline{\partial} \Theta(t)
&=& O(|w_{F}|^{2}|w_{D}|^{- 4\hat{S}_{D}/n(n-1)}) dw_{F} \wedge d \overline{w_{F}}\\
&\hspace{15pt}+& O(|w_{F}||w_{D}|^{ -1- 4\hat{S}_{D}/n(n-1)}) ( dw_{D} \wedge d \overline{w_{F}} + dw_{F} \wedge d \overline{w_{D}})\\
&\hspace{15pt}+& O(|w_{D}|^{- 2 - 4\hat{S}_{D}/n(n-1)}) dw_{D} \wedge d \overline{w_{D}},\\
\partial G_{v}^{\beta}(\beta b) \wedge \overline{\partial} G_{v}^{\beta}(\beta b)
&=& O((|w_{F}|^{2 \beta} + v)^{-2/\beta} |w_{F}|^{-2}) dw_{F} \wedge d \overline{w_{F}}\\
&\hspace{15pt}+& O(|w_{F}|^{-1}|w_{D}|(|w_{F}|^{2 \beta} + v)^{-2/\beta}) ( dw_{D} \wedge d \overline{w_{F}} + dw_{F} \wedge d \overline{w_{D}})\\
&\hspace{15pt}+& O((|w_{F}|^{2 \beta} + v)^{-2/\beta} |w_{D}|^{2}) dw_{D} \wedge d \overline{w_{D}}.\\
\end{eqnarray*}
\end{lemma}
From the definition of this region, we obtain
\begin{eqnarray*}
\omega_{c,v,\eta}
=
\left[ 
\begin{array}{ccccc}
g_{1,\overline{1}} & \cdots & g_{1,\overline{n-2}} & g_{1,\overline{n-1}} & g_{1,\overline{n}} \\
\vdots & \ddots & \vdots& \vdots & \vdots \\
g_{n-2,\overline{1}}& \cdots &g_{n-2,\overline{n-2}}&g_{n-2,\overline{n-1}}&g_{n-2,\overline{n}}\\
g_{n-1,\overline{1}} & \cdots & g_{n-1,\overline{n-2}} & (|w_{F}|^{2 \beta} + v)^{-2/\beta} |w_{F}|^{-2} & |w_{F}|^{-1}|w_{D}|(|w_{F}|^{2 \beta} + v)^{-2/\beta}\\
g_{n,\overline{1}} & \cdots & g_{n,\overline{n-2}} & |w_{F}|^{-1}|w_{D}|(|w_{F}|^{2 \beta} + v)^{-2/\beta} & |w_{D}|^{- 2 - 4\hat{S}_{D}/n(n-1)}
\end{array} 
\right]
\end{eqnarray*}
as $w_{D},w_{F} \to 0$.\

In particular, coefficients $g_{i,\overline{j}}$ for $1 \leq i,j \leq n-2$ come from {\kah} metrics $\omega_{0}$ and $\gamma_{v}^{\beta}$.\
Thus,
\begin{eqnarray*}
\left[
\begin{array}{ccc}
g_{1,\overline{1}} & \cdots & g_{1,\overline{n-2}} \\
\vdots & \ddots & \vdots \\
g_{n-2,\overline{1}}& \cdots &g_{n-2,\overline{n-2}}
\end{array}
\right]
=
O(|w_{D}|^{-2\hat{S}_{D}/n(n-1)} + (|w_{F}|^{2 \beta} + v)^{-1/\beta}).
\end{eqnarray*}
For other blocks, we similarly have
\begin{eqnarray*}
\left[
\begin{array}{ccc}
g_{1,\overline{n-1}} & g_{1,\overline{n}} \\
\vdots & \vdots \\
g_{n-2,\overline{n-1}} & g_{n-2,\overline{n}}
\end{array}
\right]
=
O(|w_{D}|^{-2\hat{S}_{D}/n(n-1)} + (|w_{F}|^{2 \beta} + v)^{-1/\beta}).
\end{eqnarray*}

From Lemma 3.4 in \cite{Aoi1}, we have
\begin{eqnarray*}
g^{i,\overline{j}}
=
\left[ 
\begin{array}{ccccc}
g^{1,\overline{1}} & \cdots & g^{1,\overline{n-2}} & g^{1,\overline{n-1}} & g^{1,\overline{n}} \\
\vdots & \ddots & \vdots& \vdots & \vdots \\
g^{n-2,\overline{1}}& \cdots &g^{n-2,\overline{n-2}}&g^{n-2,\overline{n-1}}&g^{n-2,\overline{n}}\\
g^{n-1,\overline{1}} & \cdots & g^{n-1,\overline{n-2}} & c(|w_{F}|^{2 \beta} + v)^{2/\beta} |w_{F}|^{2} &c |w_{D}|^{3+4\hat{S}_{D}/n(n-1)}|w_{F}|\\
g^{n,\overline{1}} & \cdots & g^{n,\overline{n-2}} &c |w_{D}|^{3+4\hat{S}_{D}/n(n-1)}|w_{F}| &c |w_{D}|^{2 + 4\hat{S}_{D}/n(n-1)}
\end{array} 
\right]
\end{eqnarray*}
as $w_{D},w_{F} \to 0$.\
Since metric tensors $g^{i,\overline{j}}$ with $i,j \neq n-1,n$ come from {\kah} metrics $\omega_{0}$ and $\gamma_{v}^{\beta}$ whose scalar curvature have been already known.\
Thus, it is enough to study the case that $i = n-1,n$ and $j= n-1,n$.\
Recall that the components of the Ricci tensor are defined by $R_{i,\overline{j}} := - g^{p,\overline{q}} \partial^{2} g_{p,\overline{q}} / \partial z^{i} \partial \overline{z}^{j} + g^{k,\overline{q}} g^{p,\overline{l}} (\partial g_{k,\overline{l}} / \partial z^{i})( \partial g_{p,\overline{q}} / \partial \overline{z}^{j} )$.\
So, the Ricci form ${\rm Ric }(\omega_{c,v,\eta})$ is written as
\begin{eqnarray*}
\left[ 
\begin{array}{ccccc}
R_{1,\overline{1}} & \cdots & R_{1,\overline{n-2}} & R_{1,\overline{n-1}} & R_{1,\overline{n}} \\
\vdots & \ddots & \vdots& \vdots & \vdots \\
R_{n-2,\overline{1}}& \cdots &R_{n-2,\overline{n-2}}&R_{n-2,\overline{n-1}}&R_{n-2,\overline{n}}\\
R_{n-1,\overline{1}} & \cdots & R_{n-1,\overline{n-2}} & c^{-3} (|w_{F}|^{2 \beta} + v)^{-2/\beta} |w_{F}|^{-2} & c^{-3}|w_{F}|^{-1}|w_{D}|(|w_{F}|^{2 \beta} + v)^{-2/\beta} \\
R_{n,\overline{1}} & \cdots & R_{n,\overline{n-2}} & c^{-3}|w_{F}|^{-1}|w_{D}|(|w_{F}|^{2 \beta} + v)^{-2/\beta} & c^{-3} |w_{D}|^{- 2 - 4 \hat{S}_{D}/n(n-1)}
\end{array} 
\right]
\end{eqnarray*}
as $w_{D},w_{F} \to 0$  and the other components of the Ricci tensor $R_{i,\overline{j}}$ for $1 \leq i \leq n-2$ are under control.\

By taking the trace, we obtain the following:
$$
S(\omega_{c,v,\eta}) = O(c^{-2}).
$$
\end{proof}

\begin{rem}
\label{non asc}
On the region in the previous claim, there are the terms $\partial \Theta(t) \wedge \overline{\partial} \Theta(t)$ and $\partial G_{v}^{\beta}(\beta b) \wedge \overline{\partial} G_{v}^{\beta}(\beta b)$ in the complete {\kah} metric $\omega_{c,v,\eta}$.\
Thus, $(X \setminus D, \omega_{c,v,\eta})$ is not of asymptotically conical geometry and we can't use the analysis in Section 5 of \cite{Aoi1} with respect to this {\kah} metric $\omega_{c,v,\eta}$.\
This problem will be solved in \cite{Aoi3}.\
\end{rem}

We proceed to the estimate of $S(\omega_{c,v,\eta})$ on another region.\
\begin{claim}
Consider the region defined by
\begin{eqnarray*}
\tilde{G}_{v}^{\beta}(b)  + \eta_{2} &<& \max \{\Theta(t) - \eta_{1}, ( t + \varphi +c) - \eta_{3} \},\\
| \Theta(t) - ( t + \varphi +c) | &<& \eta_{1} + \eta_{3}.
\end{eqnarray*}
Take parameters $\eta,\kappa$ so that 
\begin{equation}
\label{parameters}
(1 - \kappa ) c + \kappa \eta_{1} - \eta_{2} = (1 - \kappa+ \kappa a_{1} - a_{2})c = 0
\end{equation}
for any $c>0$.\
Then, we can make the scalar curvature $S(\omega_{c,v})$ small arbitrarily by taking a sufficiently large $c$.\
\end{claim}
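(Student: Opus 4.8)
The plan is to follow the proof of the previous claim, now with $\tilde{G}_{v}^{\beta}(b)$ in the role of the dominated potential. First I would invoke property c) of the regularized maximum in Lemma \ref{gluing lemma}: since on the region in question $\tilde{G}_{v}^{\beta}(b)+\eta_{2}<\max\{\Theta(t)-\eta_{1},(t+\varphi+c)-\eta_{3}\}$, the function $M_{c,v,\eta}$ agrees there with $M_{(\eta_{1},\eta_{3})}(\Theta(t),\,t+\varphi+c)$; hence $\gamma_{v}^{\beta}$ and the second variable drop out, and
\begin{align*}
\omega_{c,v,\eta}&=\frac{\partial M}{\partial t_{1}}\,\omega_{0}+\frac{\partial M}{\partial t_{3}}\,(\theta_{X}+\dol\varphi)+\frac{\partial^{2}M}{\partial t_{1}^{2}}\,\partial\Theta(t)\wedge\overline{\partial}\Theta(t)\\
&\quad+\frac{\partial^{2}M}{\partial t_{1}\partial t_{3}}\bigl(\partial\Theta(t)\wedge\overline{\partial}(t+\varphi)+\partial(t+\varphi)\wedge\overline{\partial}\Theta(t)\bigr)+\frac{\partial^{2}M}{\partial t_{3}^{2}}\,\partial(t+\varphi)\wedge\overline{\partial}(t+\varphi),
\end{align*}
all partials of $M$ being evaluated at $(\Theta(t),t+\varphi+c)$.

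The point of condition (\ref{parameters}) is to pin down where this region sits. On it $|\Theta(t)-(t+\varphi+c)|<\eta_{1}+\eta_{3}$ and $t+\varphi+c\ge c-O(1)$, which force $\Theta(t)$ to be of size $\asymp c$, hence $t$ of size $\asymp\log c$ and $w_{D}\to0$ as $c\to\infty$: the region approaches $D$. Substituting $\tilde{G}_{v}^{\beta}(b)=G_{v}^{\beta}(\beta b)+\kappa\Theta(t)$ into $\tilde{G}_{v}^{\beta}(b)+\eta_{2}<\max\{\Theta(t)-\eta_{1},(t+\varphi+c)-\eta_{3}\}$ and using (\ref{parameters}) to cancel the leading $c$-terms, one is left with $G_{v}^{\beta}(\beta b)<(1-\kappa)(t+\varphi)-(\text{a positive multiple of }c)$, which is negative for $c$ large (choosing, if necessary, $\kappa$ close to $1$ among the bounded parameters); so $\beta b<b_{0}$ and $|| \sigma_{F} ||$ is bounded below by a fixed positive constant on the region. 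Thus the region stays at a definite distance from $F$: there $\varphi$ is smooth, the {\ma} equation (\ref{our ma}) degenerates only along $D$, and the bounds of Theorem \ref{solution} apply with $|w_{F}|\asymp1$, i.e.\ as pure powers of $|w_{D}|$; in particular $\partial(t+\varphi)\wedge\overline{\partial}(t+\varphi)$ behaves near $D$ like $|w_{D}|^{-2}\,dw_{D}\wedge d\overline{w_{D}}$ up to a smooth factor and lower-order corrections from $\varphi$.

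Next I would record, exactly as in the previous claim, the fact that after the reduction $M_{(\eta_{1},\eta_{3})}$ has only two arguments, so property d) of Lemma \ref{gluing lemma} gives $\partial_{1}M+\partial_{3}M\equiv1$; differentiating this identity shows that all partials of $M$ of a fixed order are equal up to sign, and combining this with Lemma \ref{M derivatives} for a mixed multi-index (with $\eta_{1}=a_{1}c$ and $\eta_{3}$ fixed) yields that every derivative of $M$ of order $2$, $3$ or $4$ is $O(c^{-1})$. This $c^{-1}$-gain in the Hessian and higher jets of $M$ is the mechanism that produces the cancellation of singular terms at the level of scalar curvature. With it in hand I would run the block-matrix computation of the previous claim: in local holomorphic coordinates adapted to $D$ as in Lemma \ref{quadratic terms} (with $|w_{F}|\asymp1$, so $w_{F}$ plays no role), the $(w_{D},\overline{w_{D}})$-entry of $\omega_{c,v,\eta}$ is of the same order as that of $\omega_{0}$ --- once $\Theta(t)\asymp c$, the term $\partial^{2}_{1}M\,\partial\Theta(t)\wedge\overline{\partial}\Theta(t)$ weighted by $O(c^{-1})$ and the term $\tfrac{\partial M}{\partial t_{1}}\omega_{0}$ are comparable --- while the block $1\le i,j\le n-2$ is controlled by $\omega_{0}$ and $\theta_{X}+\dol\varphi$; inverting this block structure via Lemma~3.4 of \cite{Aoi1} and substituting into $R_{i,\overline{j}}=-g^{p,\overline{q}}\,\partial_{i}\partial_{\overline{j}}g_{p,\overline{q}}+g^{k,\overline{q}}g^{p,\overline{l}}(\partial_{i}g_{k,\overline{l}})(\partial_{\overline{j}}g_{p,\overline{q}})$ --- using the Ricci-flatness of $\theta_{X}+\dol\varphi$ on $X\setminus(D\cup F)$ to annihilate the ``pure $t+\varphi$'' contribution, the third- and fourth-order estimates of Theorem \ref{solution}, the decay $S(\omega_{0})=O(|| \sigma_{D} ||^{2\hat{S}_{D}/n(n-1)})$ near $D$, and the $O(c^{-1})$ bounds on the derivatives of $M$ --- one finds that ${\rm Ric}(\omega_{c,v,\eta})$ has entries of order $c^{-3}$ times the corresponding entries of $\omega_{c,v,\eta}$, so taking the trace gives $S(\omega_{c,v,\eta})=O(c^{-2})$, arbitrarily small for $c$ large.

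The main obstacle is the bookkeeping of this last step: one must check that the additional singular factors carried by the mixed terms $\partial\Theta(t)\wedge\overline{\partial}(t+\varphi)$ and $\partial(t+\varphi)\wedge\overline{\partial}(t+\varphi)$, and by their first two derivatives, are absorbed by the blow-up of $\omega_{c,v,\eta}$ near $D$ without destroying the $c^{-1}$-gain from the Hessian of $M$; and, on the way, one must verify that (\ref{parameters}) does indeed both keep $\tilde{G}_{v}^{\beta}(b)$ off the maximum on this region and confine the region to the locus where $\varphi$ is smooth, so that Theorem \ref{solution} can be applied without its $|w_{F}|$-dependence.
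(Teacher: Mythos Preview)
Your reduction to the two–variable regularized maximum via property~c) is correct and matches the paper. The gap is in the second paragraph, where you claim that after using~(\ref{parameters}) one obtains $G_{v}^{\beta}(\beta b)<(1-\kappa)(t+\varphi)-(\text{a positive multiple of }c)$ and hence that $\|\sigma_{F}\|$ stays bounded below on the region. This is not what~(\ref{parameters}) does: bounding $\max\{\Theta(t)-\eta_{1},(t+\varphi+c)-\eta_{3}\}$ above by $(t+\varphi+c)+\eta_{3}$ (both terms satisfy this by the second defining inequality) and substituting $\tilde{G}_{v}^{\beta}(b)=G_{v}^{\beta}(\beta b)+\kappa\Theta(t)$ together with $\Theta(t)>(t+\varphi+c)-\eta_{1}-\eta_{3}$ gives
\[
G_{v}^{\beta}(\beta b)<(1-\kappa)(t+\varphi)+\bigl[(1-\kappa)c+\kappa\eta_{1}-\eta_{2}\bigr]+(1+\kappa)\eta_{3}=(1-\kappa)(t+\varphi)+(1+\kappa)\eta_{3},
\]
so~(\ref{parameters}) cancels the $c$–dependence \emph{exactly}; no negative multiple of $c$ appears. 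Since on this region $t\asymp\log c$, the right-hand side goes to $+\infty$ with $c$, and the region is \emph{not} kept at a fixed distance from $F$: $\|\sigma_{F}\|$ can tend to $0$, and Theorem~\ref{solution} must be applied with its full $|w_{F}|$–dependence.

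What the paper actually does with the inequality above is combine it with $\beta b<G_{v}^{\beta}(\beta b)$ (for small $v$ and suitable $b_{0}$) to get $\|\sigma_{F}\|^{-2\beta/(1-\kappa)}<C\|\sigma_{D}\|^{-2}$: the approach to $F$ is controlled by the approach to $D$. Choosing $\kappa$ close to $1$ (depending on $m,l,a(n)$, not for the reason you state) makes the exponent $\beta/(1-\kappa)$ large enough that the $|w_{F}|$–powers in Theorem~\ref{solution} are absorbed into $|w_{D}|$–powers, namely $\|\sigma_{F}\|^{-2-2a/l}<C\|\sigma_{D}\|^{-2am/l}$. It is precisely here that the hypothesis $\tfrac{a(n)m}{2l}<\tfrac{\hat{S}_{D}}{n(n-1)}$ of Theorem~\ref{small scalar curvature} enters: it guarantees that $\omega_{0}$, which grows like $|w_{D}|^{-2\hat{S}_{D}/n(n-1)}$, dominates these converted $\varphi$–derivatives. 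Your proposal never invokes this hypothesis, and without it the absorption step you describe in your last paragraph cannot be carried out.
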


\begin{proof}

On this region, since
$$
M_{c,v,\eta} = M_{\eta} \left( \Theta(t), t+\varphi+c \right)
$$
from Lemma \ref{gluing lemma}, we have
\begin{eqnarray*}
\omega_{c,v,\eta}
&=& \frac{\partial M_{c,v,\eta}}{\partial t_{1}} \omega_{0} + \frac{\partial M_{c,v,\eta}}{\partial t_{3}} \dol (t + \varphi)\\
&\hspace{15pt}+& \left[ 
\begin{array}{cc}
\partial \Theta(t) & \partial (t + \varphi)
\end{array} 
\right]
\left[ 
\begin{array}{c}
\frac{\partial^{2} M_{c,v,\eta}}{\partial t_{i} \partial t_{j}}
\end{array} 
\right]
\left[ 
\begin{array}{cc}
\overline{\partial} \Theta(t) & \overline{\partial} (t + \varphi)
\end{array} 
\right]^{t}.
\end{eqnarray*}
From the hypothesis of this claim, we have
\begin{eqnarray*}
G_{v}^{\beta}(\beta b)
&<& (t + \varphi + c) + \eta_{3} - \kappa \Theta(t) - \eta_{2}\\
&<& (1 - \kappa)(t + \varphi + c) + \kappa (\eta_{1} + \eta_{3}) + \eta_{3} - \eta_{2}\\
&=& (1 - \kappa)(t + \varphi) + (1+\kappa)\eta_{3}.
\end{eqnarray*}
By taking a small $v>0$ and a suitable $b_{0}$ in the definition of the function $G_{v}^{\beta}( \beta b)$, we may assume that
$$
\beta b < G_{v}^{\beta}( \beta b).
$$
From a priori estimate due to Ko\l odziej \cite{Ko} again, $\varphi$ is bounded on $X$.\
So, on this region, we have the following inequality:
$$
|| \sigma_{F} ||^{-2\beta/(1-\kappa)} < C || \sigma_{D} ||^{-2}
$$
for some constant $C>0$ depending only on the $C^{0}$-norm of $\varphi$.\
By taking $\kappa$ close to $1$ which depends on $m,l$ and $a = a(n)$ in Theorem \ref{solution}, we may assume that
$$
||\sigma_{F}||^{ -2 -2a/l} < C ||\sigma_{D}||^{-2a m/l}.
$$
Thus, on this region, the growth of derivatives of $\varphi$ can be controlled by the {\kah} metric $\omega_{0}$.\
Take a point in $D \setminus (D \cap F)$ and local holomorphic coordinates $(z^{i})_{i=1}^{n} = (z^{1},...,z^{n-1},w_{D})$ satisfying $D = \{ w_{D} = 0 \}$.\
Then, we have
\begin{eqnarray*}
\left| \frac{\partial^{2}}{\partial z^{i} \partial \overline{z}^{j}} \partial^{\alpha} \varphi \right| = O\left( |w_{D}|^{-2a m/l} \right),
\end{eqnarray*}
if $1 \leq i,j \leq n-1$ and
\begin{eqnarray*}
\left| \frac{\partial^{2}}{\partial w_{D} \partial \overline{w_{D}}} \partial^{\alpha} \varphi \right| = O\left( |w_{D}|^{ -2 -2am/l} \right).
\end{eqnarray*}

Similarly, we have
\begin{lemma}
\label{quadratic terms}
By taking a suitable local holomorphic trivialization of $L_{X}$, we may assume that if $(z^{1},...,z^{n-1},w_{D}) = (0,...,0,w_{D})$, we have
\begin{eqnarray*}
\partial \Theta(t) \wedge \overline{\partial} \Theta(t)
&=& O(|w_{D}|^{- 2 - 4\hat{S}_{D}/n(n-1)}) dw_{D} \wedge d \overline{w_{D}}.\\
\end{eqnarray*}
\end{lemma}

Recall the hypothesis
$$
\frac{am}{2l} < \frac{\hat{S}_{D}}{n(n-1)}.
$$
So, Theorem \ref{solution} implies that the growth of the {\kah} metric $\omega_{c,v,\eta}$ is greater then the growth of the higher order derivatives of $\varphi$.\
Thus, Lemma 3.4 in \cite{Aoi1} shows that higher order derivatives including $\partial^{4} \varphi /\partial w^{2} \partial \overline{w}^{2}$ are controlled by taking the trace with respect to $\omega_{c,v,\eta}$.\
Therefore, we can ignore derivatives of $\varphi$ arising in the components of the Ricci tensor and we have
$$
S(\omega_{c,v,\eta}) = O(c^{-2}).
$$
\end{proof}

We proceed to the estimate of $S(\omega_{c,v,\eta})$ the following region.
\begin{claim}
Consider the region defined by
\begin{eqnarray*}
 \Theta(t) + \eta_{1} &<& \max \{\tilde{G}_{v}^{\beta}(b) - \eta_{2}, ( t + \varphi +c) - \eta_{3} \},\\
| \tilde{G}_{v}^{\beta}(b) - ( t + \varphi +c) | &<& \eta_{2} + \eta_{3}.
\end{eqnarray*}
By choosing sufficiently small number $v>0$ so that
$$
(||\sigma_{F}||^{2\beta} + v)^{2/\beta} < ||\sigma_{F}||^{4am/l}
$$
holds on this region, we can make the scalar curvature $S(\omega_{c,v})$ small arbitrarily by taking a sufficiently large $c$.\
\end{claim}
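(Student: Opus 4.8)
The plan is to follow the template of the two preceding claims, with the hypersurface $F$ now playing the role that $D$ played in the second claim. First I would observe that the first defining inequality of the region is precisely the hypothesis of property c) of Lemma~\ref{gluing lemma} with $j=1$, so the first argument $\Theta(t)$ drops out and
$$
M_{c,v,\eta}=M_{(\eta_{2},\eta_{3})}\bigl(\tilde{G}_{v}^{\beta}(b),\,t+\varphi+c\bigr)
$$
there. Differentiating,
$$
\omega_{c,v,\eta}=\frac{\partial M_{c,v,\eta}}{\partial t_{2}}(\gamma_{v}^{\beta}+\kappa\omega_{0})+\frac{\partial M_{c,v,\eta}}{\partial t_{3}}\dol(t+\varphi)+Q,
$$
where $Q$ is the semi-positive quadratic form built from $\partial\tilde{G}_{v}^{\beta}(b)$, $\partial(t+\varphi)$ and the Hessian $(\partial^{2}M_{c,v,\eta}/\partial t_{i}\partial t_{j})_{i,j\in\{2,3\}}$; this is the formula of the second claim with the $\Theta(t)$-column suppressed.

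Next I would localize. Since $\Theta(t)\to+\infty$ along $D$ while $t+\varphi+c$ and $G_{v}^{\beta}(\beta b)$ grow only logarithmically there, the first inequality forces $\Theta(t)=O(c)$, which confines the region to a shell around $F$ (meeting at most a shrinking neighbourhood of $D$, i.e.\ a neighbourhood of $D\cap F$). I would then fix a point $p\in F$, holomorphic coordinates $(z^{1},\dots,z^{n-2},w_{F},w_{D})$ with $F=\{w_{F}=0\}$ and $D=\{w_{D}=0\}$ (only $w_{F}$ present if $p\notin D$), and local trivialisations of $L_{X}$ and $K_{X}^{-l}\otimes L_{X}^{m}$ as in the lemmas on quadratic terms in the earlier claims, so that $b=-\log|w_{F}|^{2}$ plus a smooth function, and so that $\gamma_{v}^{\beta}$ and $\partial\tilde{G}_{v}^{\beta}(b)\wedge\overline{\partial}\tilde{G}_{v}^{\beta}(b)$ take the block form whose dominant entries in the $w_{F}$-direction are $(|w_{F}|^{2\beta}+v)^{-1/\beta}$ and $(|w_{F}|^{2\beta}+v)^{-2/\beta}|w_{F}|^{-2}$, and in the $w_{D}$-direction $|w_{D}|^{-2-4\hat{S}_{D}/n(n-1)}$ (coming from the $\kappa\Theta(t)$-part of $\tilde{G}_{v}^{\beta}(b)$). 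Since $\partial^{2}M_{c,v,\eta}/\partial t_{2}^{2}=O(\eta_{2}^{-1})=O(c^{-1})$ by Lemma~\ref{M derivatives} while $\eta_{3}$ is fixed, the relevant entries of $g_{i\overline j}$ are of size $c^{-1}(|w_{F}|^{2\beta}+v)^{-2/\beta}|w_{F}|^{-2}$ and $c^{-1}|w_{D}|^{-2-4\hat{S}_{D}/n(n-1)}$, and as in the first two claims Lemma~3.4 of \cite{Aoi1} gives $g^{i\overline j}$ with leading normal entries $O\bigl(c\,(|w_{F}|^{2\beta}+v)^{2/\beta}|w_{F}|^{2}\bigr)$ and $O\bigl(c\,|w_{D}|^{2+4\hat{S}_{D}/n(n-1)}\bigr)$, the mixed and tangential blocks being comparably small.

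The core step is the estimate of ${\rm Ric}(\omega_{c,v,\eta})$. The contributions of $\gamma_{v}^{\beta}$, $\kappa\omega_{0}$ and $\dol(t+\varphi)$ to the scalar curvature are controlled by Lemma~\ref{small F}, Lemma~\ref{estimate zero} and the Ricci-flatness of $\dol(t+\varphi)$, exactly as in Lemma~\ref{estimate g}. The genuinely new contributions come from high-order derivatives of $\varphi$ or of $M_{\eta}$ entering $R_{i\overline j}=-g^{p\overline q}\partial_{i}\partial_{\overline j}g_{p\overline q}+g^{k\overline q}g^{p\overline l}\partial_{i}g_{k\overline l}\,\partial_{\overline j}g_{p\overline q}$ through the quadratic form $Q$: every factor $\partial^{2}M_{\eta}/\partial t_{i}\partial t_{j}$ with $i$ or $j$ equal to $2$ carries a negative power of $c$ (Lemma~\ref{M derivatives}), and the derivatives of $\varphi$ of total order at most $4$ are bounded by Theorem~\ref{solution}. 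Contracting the worst such term against $g^{i\overline j}$ and using $|w_{F}|\asymp||\sigma_{F}||$, $|w_{D}|\asymp||\sigma_{D}||$, the choice of $v$ with $(||\sigma_{F}||^{2\beta}+v)^{2/\beta}<||\sigma_{F}||^{4a(n)m/l}$ is exactly what absorbs the $w_{F}$-blow-up $|w_{F}|^{-2-2a(n)/l}$ of $\partial^{4}\varphi/\partial w_{F}^{2}\partial\overline{w_{F}^{2}}$, while the standing hypothesis $a(n)m/(2l)<\hat{S}_{D}/n(n-1)$ absorbs the $w_{D}$-blow-up $|w_{D}|^{-2-2a(n)m/l}$ of $\partial^{4}\varphi/\partial w_{D}^{2}\partial\overline{w_{D}^{2}}$; the tangential and mixed higher derivatives of $\varphi$ are dealt with the same way. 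Hence ${\rm Ric}(\omega_{c,v,\eta})$ stays bounded as in the previous two claims, and taking the trace with respect to $\omega_{c,v,\eta}$ gives $S(\omega_{c,v,\eta})=O(c^{-2})$, which is arbitrarily small once $c$ is large.

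The main obstacle I anticipate is the bookkeeping of the last paragraph near $D\cap F$, where the fourth-order derivatives of $\varphi$ blow up simultaneously in both normal directions: using the block structure of $g^{i\overline j}$ from Lemma~3.4 of \cite{Aoi1} one must check that each such term, after contraction, factors as a quantity tending to $0$ (supplied by the $v$-condition in the $w_{F}$-slot and by $a(n)m/(2l)<\hat{S}_{D}/n(n-1)$ in the $w_{D}$-slot) times a bounded quantity, rather than an indeterminate product $0\cdot\infty$. A secondary point needing care is to exhibit $v$ explicitly as a function of $c,\beta,\kappa$ (and the shell's location) so that $(||\sigma_{F}||^{2\beta}+v)^{2/\beta}<||\sigma_{F}||^{4a(n)m/l}$ holds throughout the region and not merely at one point.
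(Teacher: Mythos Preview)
Your proposal is correct and follows essentially the same approach as the paper. The paper's own proof is extremely terse: it records the single lemma
\[
\partial G_{v}^{\beta}(\beta b)\wedge\overline{\partial}G_{v}^{\beta}(\beta b)=O\bigl((|w_{F}|^{2\beta}+v)^{-2/\beta}|w_{F}|^{-2}\bigr)\,dw_{F}\wedge d\overline{w_{F}}
\]
in adapted coordinates, justifies the existence of a small $v$ by noting that $\min\|\sigma_{F}\|$ on the region increases as $v\to0$ while $4a(n)m/l<4$, and then simply says ``we can prove this claim by using the same way in the previous claim.'' Your write-up is exactly the unpacking of that sentence, with the roles of $D$ and $F$ swapped relative to Claim~2 and the $v$-condition playing the part that the choice of $\kappa$ played there.
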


\begin{proof}
The reason why we can find a sufficiently small number $v>0$ satisfying the statement in this claim is that $\min \{ ||\sigma_{F}|| \}$ on this region increase as $v \to 0$ and $4am/l < 4$.\
In order to prove this Claim, we need the following lemma.
\begin{lemma}
\label{quadratic terms}
By taking a suitable local trivialization of $K_{X}^{-l} \otimes L_{X}^{m}$, we may assume that if $(z^{1},...,z^{n-2},w_{F},z_{n}) = (0,...,0,w_{F},0)$, we have
\begin{eqnarray*}
\partial G_{v}^{\beta}(\beta b) \wedge \overline{\partial} G_{v}^{\beta}(\beta b)
&=& O((|w_{F}|^{2 \beta} + v)^{-2/\beta} |w_{F}|^{-2}) dw_{F} \wedge d \overline{w_{F}}.
\end{eqnarray*}
\end{lemma}
Thus, we can prove this claim by using the same way in the previous claim.
\end{proof}

The remained case is the following claim.\
\begin{claim}
On the region defined by
\begin{eqnarray*}
| \Theta(t) -  \tilde{G}_{v}^{\beta}(b) | &<& \eta_{1} + \eta_{2},\\
| \tilde{G}_{v}^{\beta}(b) - ( t + \varphi +c) | &<& \eta_{2} + \eta_{3},\\
| \Theta(t) - ( t + \varphi +c) | &<& \eta_{1} + \eta_{3},
\end{eqnarray*}
we can make the scalar curvature $S(\omega_{c,v,\eta})$ small arbitrarily by taking a sufficiently large $c$.\
\end{claim}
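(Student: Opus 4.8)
The plan is to treat this triple-overlap region, which is a sufficiently small neighbourhood of $D\cap F$, as the common refinement of the three preceding claims: no genuinely new phenomenon occurs, and only the bookkeeping of the full Hessian of $M_\eta$ and of the degenerate block of $\omega_{c,v,\eta}$ in the two normal directions at once becomes heavier. First I would fix a point $p\in D\cap F$ and local holomorphic coordinates $(z^{1},\dots,z^{n-2},w_{F},w_{D})$ with $F=\{w_{F}=0\}$ and $D=\{w_{D}=0\}$, and use the explicit formula for $\omega_{c,v,\eta}$ recalled above, writing $\Phi_{1}=\Theta(t)$, $\Phi_{2}=\tilde{G}_{v}^{\beta}(b)$ and $\Phi_{3}=t+\varphi+c$, so that the part of $\omega_{c,v,\eta}$ needing attention is $\sum_{i,j}\frac{\partial^{2}M_{c,v,\eta}}{\partial t_{i}\partial t_{j}}\,\partial\Phi_{i}\wedge\overline{\partial}\Phi_{j}$. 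On this region I keep all the parameter choices already forced in the earlier cases: the relation (\ref{parameters}); the constant $\kappa\in(0,1)$ taken close to $1$, depending on $l,m$ and $a=a(n)$, so that $\|\sigma_{F}\|^{-2-2a/l}<C\|\sigma_{D}\|^{-2am/l}$; and $v>0$ chosen small enough that $(\|\sigma_{F}\|^{2\beta}+v)^{2/\beta}<\|\sigma_{F}\|^{4am/l}$ holds there. Together with the standing hypothesis (\ref{positivity 2}), $am/(2l)<\hat{S}_{D}/n(n-1)$, these guarantee that along both normal directions $\omega_{c,v,\eta}$ grows at least as fast as the higher-order derivatives of $\varphi$ provided by Theorem \ref{solution}.

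Next I would read off the leading behaviour of the coefficients $g_{i,\overline{j}}$ from the normal-direction expansions of $\partial\Theta(t)\wedge\overline{\partial}\Theta(t)$ and $\partial G_{v}^{\beta}(\beta b)\wedge\overline{\partial}G_{v}^{\beta}(\beta b)$ established in the first claim, noting that $\partial(t+\varphi)\wedge\overline{\partial}(t+\varphi)$ contributes only a $dw_{D}\wedge d\overline{w_{D}}$-term whose order, by the $C^{2}$-estimate (\ref{our c2 estimate}) and the boundedness of $\varphi$, is dominated by that of $\omega_{0}$; and that, by Lemma \ref{M derivatives}, every second derivative $\partial^{2}M_{c,v,\eta}/\partial t_{i}\partial t_{j}$ with $i$ or $j$ in $\{1,2\}$ is $O(c^{-1})$ because $\eta_{1}=a_{1}c$ and $\eta_{2}=a_{2}c$. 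This exhibits $\omega_{c,v,\eta}$, exactly as in the first claim, as a block matrix whose $(n-2)\times(n-2)$ block and whose mixed blocks are governed by $\omega_{0}$ and $\gamma_{v}^{\beta}$ --- so that the corresponding entries of the inverse and of the Ricci tensor are already under control --- while its $2\times2$ block in the $(w_{F},w_{D})$-directions has, up to these $O(c^{-1})$-factors, the leading shape already displayed in the first claim. Inverting this block by Lemma 3.4 of \cite{Aoi1} and forming $R_{i,\overline{j}} = - g^{p,\overline{q}}\,\partial^{2}g_{p,\overline{q}}/\partial z^{i}\partial\overline{z}^{j} + g^{k,\overline{q}}g^{p,\overline{l}}(\partial g_{k,\overline{l}}/\partial z^{i})(\partial g_{p,\overline{q}}/\partial\overline{z}^{j})$ --- in which the fourth-order terms $\partial^{4}\varphi/\partial w_{F}^{2}\partial\overline{w_{F}}^{2}$ and $\partial^{4}\varphi/\partial w_{D}^{2}\partial\overline{w_{D}}^{2}$ are absorbed thanks to Theorem \ref{solution} and the growth comparison of the previous paragraph --- the powers of $c$ combine exactly as in the first claim.

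Taking the trace $S(\omega_{c,v,\eta})=g^{i,\overline{j}}R_{i,\overline{j}}$ then yields $S(\omega_{c,v,\eta})=O(c^{-2})$ on this region, which tends to $0$ as $c\to\infty$; since this is the last of the four regions, the proof of the Claim, and with it of Theorem \ref{small scalar curvature}, is complete. The main obstacle is organisational rather than conceptual: one must carry all three ``dangerous'' $(1,1)$-forms $\partial\Theta(t)\wedge\overline{\partial}\Theta(t)$, $\partial G_{v}^{\beta}(\beta b)\wedge\overline{\partial}G_{v}^{\beta}(\beta b)$ and $\partial(t+\varphi)\wedge\overline{\partial}(t+\varphi)$, together with all their cross terms, simultaneously, and check that the block inversion of Lemma 3.4 of \cite{Aoi1} still isolates the $(w_{F},w_{D})$-block cleanly under the combined constraints on $\kappa$, $v$ and $\eta$; but since each constituent piece has already been controlled in the three preceding claims, no estimate beyond Theorem \ref{solution} and Lemma \ref{M derivatives} is needed.
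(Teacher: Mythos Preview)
Your proposal is correct and takes essentially the same approach as the paper. The paper's own proof of this claim consists of the single sentence ``On this region, we can show that $S(\omega_{c,v,\eta}) = O(c^{-2})$ similarly,'' so you have simply supplied the bookkeeping that the word ``similarly'' stands for: combining the block analysis near $D\cap F$ from the first claim with the absorption of the $\varphi$-derivatives via Theorem \ref{solution}, Lemma \ref{M derivatives}, and the parameter choices from the second and third claims.
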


\begin{proof}
On this region, we can show that $S(\omega_{c,v,\eta}) = O(c^{-2})$ similarly.\
Thus, we have finished proving Theorem \ref{small scalar curvature}.\
\end{proof}


%
%


\begin{thebibliography}{}

\bibitem{Aoi1}
T. Aoi, Toward a construction of scalar-flat {\kah} metrics on affine algebraic manifolds, arXiv 1907.09780.

\bibitem{Aoi3}
T. Aoi, Complete scalar-flat {\kah} metrics on affine algebraic manifolds, arXiv 1910.12317.



\bibitem{BK}
S. Bando and R. Kobayashi, Ricci-flat {\kah} metrics on affine algebraic manifolds. II, Math. Ann.\ 287 (1990), 175--180.

\bibitem{Bl}
Z. B\l ocki, On the regularity of the complex Monge-Amp$\grave{{\rm e}}$re operator, Complex Geometric Analysis in Pohang (1997). Contemp. Math., 222, Amer. Math. Soc., Providence, RI, (1999), 181--189.

\bibitem{Ca}
E. Calabi, M\'{e}triques {\kah}iennes et fibr\'{e}s holomorphes, Ann. Sci. \'{E}cole Norm. Sup. (4)\ 12 (1979), no.\ 2, 269--294.

\bibitem{De}
J.-P. Demailly. Complex analytic and differential geometry, online book available
at http://www-fourier.ujf-grenoble.fr/~demailly/manuscripts/agbook.pdf.

\bibitem{Di}
E. Di Nezza and H. C. Lu, Complex Monge-Amp$\grave{{\rm e}}$re equations on quasi-projective varieties, J. Reine Angew. Math. (2014), DOI 10.1515/crelle-2014-0090.

\bibitem{GT}
D. Gilbarg and N. S. Trudinger, Elliptic partial differential equations of second order (2nd edn.),
Grundlehren Math. Wiss. 224. Springer, Berlin, 1983.


\bibitem{GZ}
V. Guedj and A. Zeriahi, Degenerate Complex Monge-Amp$\grave{{\rm e}}$re Equations, EMS Tracts Math, 2017.


\bibitem{HS}
A. D. Hwang and M. A. Singer, A momentum construction for circle-invariant {\kah} metrics,
Trans. Amer. Math. Soc.\ 354 (2002), no.\ 6, 2285--2325.

\bibitem{Ko}
S. Ko\l odziej, The complex Monge-Amp$\grave{{\rm e}}$re equation, Acta Math.\ 180 (1998), 69--117.


\bibitem{Pa}
M. P$\check{{\rm a}}$un, Regularity properties of the degenerate Monge-Amp$\grave{{\rm e}}$re equations on compact {\kah} manifolds, Chin. Ann. Math. Ser. B.\ 29 (2008), 623--630.




\bibitem{Yau}
S. T. Yau, On the Ricci curvature of a compact {\kah} manifold and the complex Monge-Amp$\grave{{\rm e}}$re equations, I, Comm. Pure Appl. Math.\ 31 (1978), 339--411.



%
\end{thebibliography}


\bigskip
\address{
(CURRENT ADDRESS)\\
Osaka Prefectural Abuno High School,\\
3-38-1, Himuro-chou, Takatsuki-shi,\\
Osaka, 569-1141\\
Japan
}
{takahiro.aoi.math@gmail.com}

\bigskip
\address{
(OLD ADDRESS)\\
Department of Mathematics\\
Graduate School of Science\\
Osaka University\\
Toyonaka 560-0043\\
Japan
}
{t-aoi@cr.math.sci.osaka-u.ac.jp}

\end{document}